\newtheorem{thm}{Theorem}[section]
\newtheorem{cor}[thm]{Corollary}
\newtheorem{lem}[thm]{Lemma}
\newtheorem*{thm*}{Theorem}
\newtheorem{prop}[thm]{Proposition}
\theoremstyle{definition}
\newtheorem{defin}[thm]{Definition}
\newtheorem*{claim}{Claim}
\newcommand{\forcing}{
	\operatorname{Fn}(\kappa,Metr,\omega)}
\newcommand{\rg}{\operatorname{rg}}
\newcommand{\dom}{\operatorname{dom}}
\newcommand{\metr}{(\mathcal{M},d)}
\newcommand{\rect}{(\mathcal{X},c)}
\newcommand{\K}{\mathcal{K}}
\newcommand{\ch}{\operatorname{CH}}
\newcommand{\ma}{\operatorname{MA}}
\newcommand{\zfc}{\operatorname{ZFC}}
\newcommand{\oca}{\operatorname{OCA}}
\title{What would the rational Urysohn space and the random graph look like if they were uncountable?}
\author{Ziemowit Kostana
	\footnote{Research of Z. Kostana was supported by the GAČR project EXPRO 20-31529X and RVO: 67985840, and by the European Research Council (grant
		agreement ERC-2018-StG 802756).}\\
	Institute of Mathematics
	Czech Academy of Sciences\\
	Žitná 25, 115 67 Prague, Czech Republic;\\
	University of Warsaw,\\
	Banacha 2, 02-097 Warsaw, Poland \\
	z.kostana@mimuw.edu.pl}
\begin{document}
	
	\maketitle
	
	\begin{abstract}
		Building on the work of Avraham, Rubin, and Shelah, we aim to build a variant of the Fra\"iss\'e theory for uncountable models built from finite submodels. With this aim, we generalize the notion of an increasing set of reals to other structures. As an application, we prove that the following is consistent: there exists an uncountable, separable metric space $X$ with rational distances, such that every uncountable partial 1-1 function from $X$ to $X$ is an isometry on an uncountable subset. We aim for a general theory of structures with this kind of properties. This includes results about the automorphism groups, and partial classification results.
	\end{abstract}
	
	{\bf Keywords:} Fra\"iss\'e limit, Martin's Axiom, homogeneous structure, generic structures \\
	{\bf MSC classification:} 06A05 03C25 03C55 03E35
	
	\section{Introduction}
	\subsection{History and Motivation}
	
	It is well-known that $\ch$ entails the existence of $\omega_1$-saturated models of size $\omega_1$, for all first order theories admitting infinite models (\cite{hodges}, ch. 10). In many important cases, such models are unique up to isomorphism. This observation is a cornerstone of the Fra\"iss\'e-J\'onsson theory, developed in \cite{jonsson}. These models are built as $\omega_1$-chains of countable structures, and so it seems interesting to ask whether we can build uncountable models by "glueing together" finite structures. One indication that such theory should be possible to formulate is the celebrated theorem of Baumgartner, about separable $\omega_1$-dense linear orders. Recall that a linear order is $\omega_1$-dense if each non-empty open interval has cardinality $\omega_1$.

	\begin{thm*}[Baumgartner, \cite{baum}]
		It is consistent with $\zfc$ that there exists a unique up to isomorphism separable $\omega_1$-dense linear order.
	\end{thm*}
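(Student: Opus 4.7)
The plan is to force an axiom $\mathrm{BA}$ asserting that any two $\omega_1$-dense subsets of $\mathbb{R}$ are order-isomorphic; this suffices, since a separable linear order embeds into $\mathbb{R}$ via the standard extension of a Cantor back-and-forth between its countable dense subset and $\mathbb{Q}$ (sending each remaining point to the supremum of its embedded predecessors). So up to isomorphism a separable $\omega_1$-dense linear order is just an $\omega_1$-dense subset of $\mathbb{R}$, and $\mathrm{BA}$ delivers the conclusion.

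For each pair $A,B$ of $\omega_1$-dense subsets of $\mathbb{R}$, I would work with the poset $P_{A,B}$ of finite order-preserving injections from $A$ to $B$, ordered by reverse inclusion. A generic filter would produce a partial order-isomorphism between a subset of $A$ and a subset of $B$; combining the obvious density arguments (for each $a \in A$, the set of conditions with $a$ in the domain is dense, and dually for $B$) with the $\omega_1$-density hypothesis, one upgrades this to a total order-isomorphism $A \to B$.

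The central step is showing that $P_{A,B}$ is ccc. Given a candidate uncountable antichain $\{p_\alpha : \alpha<\omega_1\}$, I would apply the $\Delta$-system lemma to reduce to the situation where all $p_\alpha$ agree on a common root, the $p_\alpha \setminus \mathrm{root}$ all have the same size $n$, and the $2n$-tuples enumerating them realize a single order-type on each side. Then a Fodor/pressing-down argument, coupled with a Ramsey-type search that exploits the $\omega_1$-density of $A$ and $B$ to find enough ``room'' between points, should locate $\alpha \neq \beta$ for which $p_\alpha \cup p_\beta$ remains an order-preserving injection — contradicting the antichain assumption.

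Assuming the ccc is in hand, I would iterate $P_{A,B}$ with finite support over $\omega_2$ stages from a ground model of $\mathrm{CH}$, using standard bookkeeping to anticipate every pair $(A,B)$ that appears in intermediate extensions. The hard part is the chain condition: in plain $\mathrm{ZFC}$ there exist pathological pairs (Sierpi\'nski-like constructions, or pairs encoding an Aronszajn line) for which $P_{A,B}$ is \emph{not} ccc. So the combinatorial lemma in the previous paragraph cannot be proved outright; one must isolate a Ramsey/partition property of $\omega_1$-dense sets that both implies ccc of $P_{A,B}$ and is preserved — and eventually forced to hold for all $\omega_1$-dense sets — by the iteration. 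Pinning down this property, and verifying it survives the iteration, is where I expect the bulk of the work to go.
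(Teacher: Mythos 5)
First, a point of orientation: the paper does not prove this theorem --- it is quoted as background from \cite{baum} --- so there is no internal proof to match your attempt against. The closest the paper comes is Section 3.3, where rectangular separable $\omega_1$-dense linear orders are studied and shown to be unique only up to order-reversal under $MA_{\omega_1}$ (Theorem \ref{ihaveenough}); that is the Avraham--Rubin--Shelah picture, not full Baumgartner. Judged on its own, your proposal has the right skeleton (reduce to $\aleph_1$-dense sets of reals, force with finite partial order-isomorphisms $P_{A,B}$, iterate with finite support and bookkeeping), and you are right to retract the claim that a $\Delta$-system plus Fodor argument yields the c.c.c.\ of $P_{A,B}$ outright: since $|P_{A,B}|=\omega_1$ and the relevant dense sets number $\omega_1$, a ZFC (or even MA-provable) c.c.c.\ for all pairs would make $BA$ a consequence of $MA_{\omega_1}$, contradicting \cite{as}. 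But the proposal then stops exactly where the theorem begins. What you call ``pinning down this property'' is not a residual verification; it is the entire content of Baumgartner's proof, and as written the argument does not go through.

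Concretely, the missing ingredient is the partition property that \cite{as} and \cite{ars} call an \emph{increasing set} --- precisely the specialization of this paper's rectangularity to linear orders (axioms A1c--A3c in Section 3.3): every uncountable family of pairwise disjoint $n$-tuples contains two alike tuples. When the sets involved are increasing and suitably interleaved, the $\Delta$-system argument you sketch does close up (two alike remainder-tuples give compatible conditions), and the uncountable filter is then extracted via the device of Lemma \ref{predense}; the property is forced by Cohen-style forcing, is preserved by the $A$-c.c.\ posets, and survives finite support iteration by the argument of Proposition \ref{propx5}. What this machinery buys, however, is only the uniqueness of \emph{increasing} (rectangular) separable $\omega_1$-dense orders up to reversal --- see Theorem \ref{ihaveenough} and the $OCA_{ARS}$ discussion in the introduction. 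To get uniqueness among \emph{all} separable $\omega_1$-dense orders, Baumgartner's iteration must do strictly more than force $MA_{\omega_1}$ over a model with one nice increasing set: it must arrange, stage by stage, that every $\aleph_1$-dense set appearing in an intermediate model can be matched with the canonical one by a poset that is c.c.c.\ \emph{in that model}, and this requires his specific thinning/preparation of arbitrary pairs $(A,B)$, not generic bookkeeping. (There is also a minor normalization you elide: a separable $\omega_1$-dense order embeds into $\mathbb{R}$ as a set $\omega_1$-dense in itself, not necessarily $\aleph_1$-dense in every interval of $\mathbb{R}$, and handling endpoints and ``gaps between blocks'' needs a sentence.) So the verdict is: correct strategy, honestly flagged, but with the central lemma --- the one that distinguishes Baumgartner's theorem from the weaker trichotomy of \cite{ars} --- left unproved.
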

	
	Once the conclusion of this theorem holds, we can take any subfield of the reals of size $\omega_1$ as a model of this ordering. This shows that the (unique) $\omega_1$-dense separable linear order is homogeneous with respect to finite subsets (as is any subfield of the reals). Ideas of Baumgartner were extended by Avraham, Rubin, and Shelah in \cite{as} and \cite{ars}. Among other things, they show that Baumgartner's Theorem does not follow from $\ma+\lnot \ch$, but the latter axiom already ensures some approximation of the Baumgartner's Theorem. They introduce another axiom, $\oca_{ARS}$, and show that $\ma_{\omega_1}+\oca_{ARS}$ implies that either the conclusion of Baumgartner's Theorem holds, or there are up to isomorphism exactly three homogeneous separable $\omega_1$-dense linear orders (\cite{ars}, Sec. 6). This gives some insight that $\ma_{\omega_1}$ might be used in place of induction in some uncountable variant of the Fra\"iss\'e theory.
	
	\par We continue the line of research started in \cite{cohen-like}. We study the structures we introduced therein, and apply to them the ideas from \cite{as} and \cite{ars}. For example, the generic structures from \cite{cohen-like}, although initially rigid, become highly homogeneous in suitable forcing extensions satisfying $\ma_{\omega_1}$. Inductive arguments from the classical Fra\"iss\'e theory are replaced by Martin's Axiom, and in some cases it is enough to ensure some form of  uniqueness. 
	
	\par Even if aiming towards a new variant of the Fra\"iss\'e theory is perhaps too ambitious, we still find new applications for some techniques developed in \cite{as}. As an example, let us look at a Ramsey-type result from \cite{as}.
	
	\begin{thm*}[Thm. 2, \cite{as}]
		It is consistent with $\zfc+\ma+"2^{\omega}=\omega_2"$ that there exists an uncountable set $A\subseteq \mathbb{R}$ with the property that each uncountable  1-1 function $f\subseteq A\times A$ is strictly increasing on an uncountable set.
	\end{thm*}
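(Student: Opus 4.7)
The natural strategy splits into two steps: first, build under $CH$ an uncountable set $A\subseteq\mathbb{R}$ with a strong Ramsey-type property that implies the conclusion; second, force $MA+2^\omega=\omega_2$ by a finite-support ccc iteration of length $\omega_2$ in such a way that the property of $A$ is preserved along the iteration.

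For the first step, I would isolate an ``entangled'' condition on $A$: for every uncountable sequence $\langle (x_\alpha,y_\alpha):\alpha<\omega_1\rangle$ of pairs of distinct reals in $A$ and every sign pattern $(\epsilon_0,\epsilon_1)\in\{<,>\}^2$, there exist $\alpha\ne\beta$ with $x_\alpha\,\epsilon_0\,x_\beta$ and $y_\alpha\,\epsilon_1\,y_\beta$. Applied to the graph of an uncountable 1-1 function $f\subseteq A\times A$, together with a bootstrapping thinning argument, this yields an uncountable subset on which $f$ is strictly increasing (one works with the pattern $(<,<)$). Under $CH$ a diagonal induction constructs such an $A$: at stage $\alpha$ only countably many constraints have been committed, each of which cuts out a set of forbidden candidates small enough that an appropriate $a_\alpha$ can always be chosen inside any fixed countable dense $D\subseteq\mathbb{R}$, keeping $A$ rational-valued if desired.

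For the second step, which I expect to be the technical crux and the main obstacle, I would iterate with finite support all ccc posets of size $\le\aleph_1$ under a standard MA-bookkeeping, and prove a one-step preservation lemma: if $Q$ is ccc and $\dot f$ is a $Q$-name for an uncountable 1-1 function $\dot f\subseteq A\times A$, then some $q\in Q$ forces $\dot f$ to be strictly increasing on an uncountable subset of $A$. The plan is to pick, for each $\alpha<\omega_1$, a condition $q_\alpha\in Q$ and a pair $(a_\alpha,b_\alpha)\in A\times A$ with $q_\alpha\Vdash (a_\alpha,b_\alpha)\in\dot f$, apply a $\Delta$-system/Fodor normalization to the $q_\alpha$ so that pairwise compatibility is controlled by a finite ``root'', and then invoke entangledness of $A$ \emph{in} $V$ on the sequence $\langle(a_\alpha,b_\alpha)\rangle$ to produce $\alpha\ne\beta$ whose decided pairs realize the pattern $(<,<)$ with $q_\alpha,q_\beta$ compatible. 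Iterating this refinement inside the thinning yields an uncountable $B\subseteq A$ and a common extension forcing $\dot f\restriction B$ to be strictly increasing; limit stages of the iteration are handled by the standard finite-support preservation of ccc. The genuine difficulty is converting the partial information carried by $Q$-names into an uncountable ground-model configuration to which entangledness directly applies, and it is this marriage of $\Delta$-system combinatorics with the entangled-set property that drives the whole argument.
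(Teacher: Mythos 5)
Your step 2 contains the fatal gap, and it is exactly the point that the Avraham--Shelah argument (and this paper's $\metr$-c.c.\ machinery) is built to circumvent: the one-step preservation lemma is \emph{false} for arbitrary ccc $Q$. Concretely, if $A$ is entangled in the sense you describe, then the poset $Q$ of finite \emph{order-reversing} partial injections $p\subseteq A\times A$ is ccc --- given uncountably many pairwise disjoint conditions, entangledness applied with the sign pattern $(<,>)$ (together with separability of $\mathbb{R}$ to control cross-terms) produces two compatible ones --- and yet $Q$ generically adds an uncountable $1$-$1$ function $f\subseteq A\times A$ that is decreasing everywhere, so no condition forces $\dot f$ to be increasing on an uncountable set. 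Hence a standard bookkeeping over \emph{all} ccc posets will at some stage hand you a ccc poset that destroys the property of $A$. Your proposed proof of the lemma also fails on its own terms: an abstract ccc poset has no supports, so a ``$\Delta$-system/Fodor normalization'' of the $q_\alpha$ yields no compatibility information; ccc only guarantees that \emph{some} two of the $q_\alpha$ are compatible, and you have no means of making that pair coincide with a pair whose decided values realize the pattern $(<,<)$.

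The repair is to make ``compatibility together with the order pattern'' the defining property of the posets you iterate. One restricts the iteration to posets $Q$ with the $A$-c.c.: for every uncountable disjoint family $\{(q_\xi,\overline{x}_\xi)\mid\xi<\omega_1\}\subseteq Q\times A^n$ there are $\xi\neq\eta$ with $q_\xi,q_\eta$ compatible \emph{and} $\overline{x}_\xi,\overline{x}_\eta$ alike. This class preserves entangledness and is closed under finite-support iteration, which yields $MA(A\text{-c.c.})$. Full $MA$ is then recovered a posteriori: $MA_{\omega_1}(A\text{-c.c.})$ implies that every uncountable disjoint family of tuples contains an uncountable pairwise alike subfamily, whence every ccc poset of size $\omega_1$ is $A$-c.c.\ (and, in particular, the order-reversing poset above has acquired an uncountable antichain and is no longer ccc, so full $MA$ owes it nothing). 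Two smaller points: your entangledness condition must be imposed on $n$-tuples for all $n$, not just on pairs, and the extraction of an uncountable increasing set in the final model is itself an application of $MA_{\omega_1}$ to the (now ccc, of size $\omega_1$) poset of finite increasing restrictions of $f$ --- finding two points at a time does not bootstrap to an uncountable set without it.
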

	
	In this spirit, we prove in Section 2
	
	\begin{thm*} 
		It is consistent with $\zfc + \ma + "2^\omega=\omega_2"$ that there exists an uncountable, separable rational metric space $(X,d)$ such that each uncountable 1-1 function $f\subseteq X\times X$ is an isometry on an uncountable set.
	\end{thm*}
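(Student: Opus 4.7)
The plan is to follow the strategy of Avraham--Shelah for Theorem 2 of \cite{as}, exchanging the linear order on $\mathbb{R}$ for the rational metric structure of \cite{cohen-like} and ``strictly increasing'' for ``isometric''. Starting from a model of $GCH$, I would first force with the Cohen-like poset $\forcing$ of \cite{cohen-like} for $\kappa=\omega_2$ to introduce a generic separable rational metric space, then single out a suitable uncountable subspace $X$ inheriting the ``freeness'' of the generic. I would then run a finite-support ccc iteration $\mathbb{P}$ of length $\omega_2$ forcing $MA + 2^\omega = \omega_2$, with bookkeeping that, at each intermediate stage, also forces with the auxiliary poset $P_f$ described below for one of the uncountable 1-1 functions $f \subseteq X\times X$ appearing so far. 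Since the iteration is ccc of size $\omega_2$, every uncountable 1-1 $f$ in the final extension, after passing to an uncountable restriction, appears at some stage $\alpha<\omega_2$ and is dealt with after stage $\alpha$.

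The key auxiliary poset is
\[
P_f \;=\; \{\, s\subseteq f : s \text{ is finite and } f\!\upharpoonright\!\dom(s) \text{ is an isometry}\,\},
\]
ordered by reverse inclusion. Enumerating $\dom(f)=\{a_\xi:\xi<\omega_1\}$, the sets $D_\alpha=\{s\in P_f: \dom(s)\not\subseteq\{a_\xi:\xi<\alpha\}\}$ form an $\omega_1$-sized family of dense sets (their density being itself an amalgamation question handled by the Fra\"iss\'e-type properties of $X$ from \cite{cohen-like}), and any filter meeting all of them yields an uncountable $A\subseteq\dom(f)$ on which $f$ is an isometry. Once $P_f$ is known to be ccc, $MA_{\omega_1}$ finishes this step.

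The main obstacle is therefore to prove that $P_f$ is ccc. Given an uncountable antichain candidate $\{s_\xi:\xi<\omega_1\}\subseteq P_f$, the plan is to apply the $\Delta$-system lemma to the finite sets $\dom(s_\xi)\cup \rg(s_\xi)$, obtaining a root $r$ and an uncountable homogeneous subfamily; then refine so that all $s_\xi$ agree on $r$, carry the same rational metric isomorphism type on $(\dom(s_\xi)\cup \rg(s_\xi))\setminus r$, and share the same distance pattern connecting the non-root part to $r$. Compatibility of $s_\xi$ and $s_\eta$ then reduces to checking that for every $p\in\dom(s_\xi)\setminus r$ and $q\in\dom(s_\eta)\setminus r$ the cross-distances satisfy $d(p,q)=d(f(p),f(q))$. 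This is where the specific genericity of $X$ must be exploited: cross-distances between points sitting at disjoint coordinates of $\forcing$ are themselves generically determined, so a density argument in the Cohen-like forcing, combined with pigeonhole over the finitely many rational cross-distance patterns, should produce uncountably many pairs $(\xi,\eta)$ where the pattern on the $\dom$-side and the $\rg$-side coincide. Making this rational-metric analogue of the key combinatorial lemma of \cite{as} precise is the technical heart of the proof; the remaining ingredients (iteration, bookkeeping, density of the $D_\alpha$) are standard.
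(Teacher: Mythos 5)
Your overall architecture (a generic rational metric space from $\forcing$, then a ccc iteration forcing $MA$, with the poset $P_f$ of finite isometric pieces of $f$ doing the work) matches the paper's, and your identification of the ccc-ness of $P_f$ as the crux is correct. But the sketch has two genuine gaps. First, the combinatorial property of $X$ that makes $P_f$ ccc cannot be obtained by refining the family $\{s_\xi\}$ and applying pigeonhole: after the $\Delta$-system and isomorphism-type trimming, the remaining obstruction is the cross-distances $d(p,q)$ versus $d(f(p),f(q))$ for $p,q$ lying in \emph{different} conditions, and these depend on the pair $(\xi,\eta)$, not on single indices, so no uniformization of the family helps. What is needed is a genuine partition property of $X$ --- the paper calls it \emph{rectangularity}: among any $\omega_1$ pairwise disjoint $n$-tuples there are two that are ``alike'' in a sense (axioms A1--A3) strong enough to force cross-distances to agree with internal ones; the relevant tuples are the concatenations of domain and range points of each $s_\xi$. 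Second, and more seriously, your plan establishes the needed genericity of $X$ only in the intermediate model obtained from $\forcing$, by density arguments over the ground model; but $P_f$ must be ccc at the later stage of the iteration where $f$ appears (equivalently, in the final model), and an arbitrary finite-support ccc iteration can destroy such Ramsey-type properties of $X$. The paper resolves this by restricting the iteration to posets satisfying a strengthening of ccc (the ``$\metr$-c.c.''), proving this class is closed under finite-support iteration and preserves rectangularity, and then --- this is the essential extra step absent from your outline --- showing that $MA_{\omega_1}$ restricted to $\metr$-c.c.\ posets already implies that \emph{every} ccc poset of size $\omega_1$ is $\metr$-c.c., so that full $MA$ holds in the final model after all. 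Without some such preservation mechanism the bookkeeping argument does not close.

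A smaller but real problem: your sets $D_\alpha=\{s\in P_f:\ \dom(s)\not\subseteq\{a_\xi:\xi<\alpha\}\}$ need not be dense. Density would require that every finite isometric piece of $f$ extend by a point of $\dom(f)$ of arbitrarily large index, and amalgamation in $X$ only supplies points of $X$ realizing a prescribed distance configuration, not points $a\in\dom(f)$ whose images $f(a)$ realize the matching configuration on the range side --- $f$ is an arbitrary uncountable injection and need not cooperate. The paper circumvents this with a general lemma: for any ccc poset of size $\omega_1$ there is a condition $p$ and a family of $\omega_1$ many sets predense \emph{below} $p$ such that any filter through $p$ meeting all of them is uncountable. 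You should use that device (applied to $P_f$ once its ccc-ness is secured) in place of the claimed density of the $D_\alpha$.
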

	
	This phenomenon is more general. In Section 3, we prove similar results for other classes of structures, although they are more technical, and require some introduction (what is a "separable" graph?). In Section 4, we prove some results on the number of isomorphism types of models we built.
	
	\subsection{Acknowledgements} The article is based on the results from the author's doctoral dissertation. I thank my advisor Wies\l aw Kubi\'s for his guidance, and creating a warm and welcoming atmosphere for the research work in the Institute of Mathematics of the Czech Academy of Sciences. Also, I thank Assaf Rinot for stimulating discussions that helped to improve the final presentation of the theory. I acknowledge that there is no conflict of interest.
	
	\section{Preliminaries}
	
	By an \emph{embedding} we always mean an isomorphism into its image. A subset of a model is a \emph{substructure} if the identity inclusion is an embedding. We will say that a language is \emph{purely relational} if it doesn't contain any function or constant symbols. We write $\dom{f}$ and $\rg{f}$ to denote respectively the domain and the image of a function $f$.
	
	\par We study structures added generically by forcings $$\operatorname{Fn}(S,\mathcal{K},\lambda),$$
	that we introduced in \cite{cohen-like}. The variable $\mathcal{K}$ always stands for a class of structures in some purely relational language. All unspecified forcing terminology follows \cite{kunen}.
	
	\begin{defin}
		Let $\lambda$ be an infinite cardinal, and $S$ an infinite set. For any model $A \in \mathcal{K}$ we denote by $F(A)$ its underlying set. We introduce the forcing $\operatorname{Fn}(S,\mathcal{K},\lambda)$ as the set
		$$\{ A \in \mathcal{K}|\; F(A) \in [S]^{<\lambda} \},$$
		ordered by the reversed substructure relation. 
	\end{defin}
	
	We proved in \cite{cohen-like} that models added generically by $$\operatorname{Fn}(\omega_1,\mathcal{K},\omega)$$ tend to be rigid, while those added by
	$$\operatorname{Fn}(\omega,\mathcal{K},\omega)$$
	are isomorphic to the usual Fra\"iss\'e limits of $\mathcal{K}$. We must impose some restrictions on the class $\mathcal{K}$ for $\operatorname{Fn}(\omega,\mathcal{K},\omega)$ to be an interesting forcing notion. The following is sufficient to ensure the c.c.c. property in all cases in the range of our interest. The name \emph{Splitting Property} was coined by W. Kubi\'s. Two embeddings $f:A\hookrightarrow B$ and $g:A\hookrightarrow C$ are \emph{isomorphic}, if there exists an isomorphism $h:B\hookrightarrow C$, such that $h\circ f = g$.
	
	\begin{defin}
		$\mathcal{K}$ has the Splitting Property (SP) if for all $R \in \mathcal{K}$, all pairs of isomorphic extensions $R\subseteq X$,  $R \subseteq Y$, there exists $Z\in \mathcal{K}$, containing $X \cup Y$.
	\end{defin}
	
	\begin{prop} \label{splitting}
		Let $\mathcal{K}$ be a class of structures in a purely relational language, that satisfies the following
		\begin{itemize}
			\item $\mathcal{K}$ has countably many isomorphism types of finite models,
			\item $\mathcal{K}$ is hereditary (if $A \in \mathcal{K},\; B\subseteq A,$ then $B \in \mathcal{K}$),
			\item $\mathcal{K}$ has the SP.
		\end{itemize}
		Then the forcing $\operatorname{Fn}(\lambda,\mathcal{K},\omega)$ has the c.c.c. for any cardinal $\lambda$.
	\end{prop}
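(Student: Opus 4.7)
The plan is a standard $\Delta$-system argument, with the Splitting Property used at the end to produce the common refinement. Suppose for contradiction that $\{A_\alpha : \alpha < \omega_1\}$ is an antichain in $\operatorname{Fn}(\lambda,\mathcal{K},\omega)$; each $A_\alpha$ is a finite structure in $\mathcal{K}$ whose underlying set $F(A_\alpha)$ is a finite subset of $\lambda$.

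First, apply the $\Delta$-system lemma to $\{F(A_\alpha) : \alpha < \omega_1\}$ to obtain an uncountable $I_0 \subseteq \omega_1$ and a finite root $R \subseteq \lambda$ such that $F(A_\alpha) \cap F(A_\beta) = R$ for all distinct $\alpha, \beta \in I_0$. By heredity, each substructure $A_\alpha \restriction R$ lies in $\mathcal{K}$, and on the fixed finite set $R$ there are only countably many such substructures: each of the countably many isomorphism types of finite models in $\mathcal{K}$ contributes at most $|R|!$ structures supported on $R$. A first pigeonhole refinement gives uncountable $I_1 \subseteq I_0$ such that $A_\alpha \restriction R = A_\beta \restriction R =: R^\ast$ for all $\alpha,\beta \in I_1$.

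Next I refine once more so that the inclusions $R^\ast \hookrightarrow A_\alpha$ and $R^\ast \hookrightarrow A_\beta$ become isomorphic extensions in the sense of the paper. Consider pairs $(A,\iota)$ with $A \in \mathcal{K}$ finite and $\iota : R^\ast \hookrightarrow A$, modulo the equivalence that $(A,\iota) \sim (B,\iota')$ when some isomorphism $h : A \to B$ satisfies $h \circ \iota = \iota'$. Each isomorphism type of $A$ admits only finitely many embeddings of $R^\ast$ (at most $|A|^{|R|}$), so the set of $\sim$-classes is countable. A further pigeonhole yields uncountable $I_2 \subseteq I_1$ on which all pairs $(A_\alpha, \iota_\alpha)$ are equivalent, i.e.\ for all $\alpha,\beta \in I_2$ there is an isomorphism $A_\alpha \to A_\beta$ fixing $R^\ast$ pointwise.

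Finally, pick any two $\alpha \neq \beta$ in $I_2$. The extensions $R^\ast \subseteq A_\alpha$ and $R^\ast \subseteq A_\beta$ are isomorphic in the sense required by the Splitting Property, so there exists $Z \in \mathcal{K}$ containing $A_\alpha \cup A_\beta$ as substructures. Passing to the substructure of $Z$ on $F(A_\alpha) \cup F(A_\beta) \in [\lambda]^{<\omega}$ (allowed by heredity) produces a condition in $\operatorname{Fn}(\lambda,\mathcal{K},\omega)$ below both $A_\alpha$ and $A_\beta$, contradicting the antichain assumption. The only nontrivial step is step three: one must check carefully that the refinement is countable, which relies on finiteness of $\mathrm{Aut}(A)$ for finite $A$; the remaining steps are a direct translation of the classical c.c.c.\ argument for finite partial functions.
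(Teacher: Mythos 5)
Your argument is correct and follows essentially the same route as the paper's: a $\Delta$-system refinement, a countable pigeonhole to fix the structure on the root and the isomorphism type of the extensions over it, and then an application of the SP (together with heredity, to cut the amalgam down to a condition) to contradict the antichain assumption. The only difference is that you spell out in detail the counting steps that the paper leaves implicit.
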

	
	\begin{proof}
		Consider a family of conditions $\{{A}_\xi|\; \xi<\omega_1\}$. Using $\Delta$-system Lemma, we can trim the sequence, so that the sets $\{F({A}_\xi)|\; \xi<\omega_1\}$ form a $\Delta$-system with the root $K \in [\lambda]^{<\omega}$. There are at most countably many structures from $\K$ with the universe $K$, so we can assume that we have $\mathbb{K} \in \K$ with $F(\mathbb{K})=K$, and for all $\xi\neq \eta<\omega_1$
		$${A}_\xi \cap {A}_\eta = \mathbb{K}.$$
		
		We can also assume that the extensions $\mathbb{K}\subseteq {A}_\xi$ and $\mathbb{K}\subseteq {A}_\eta$ are isomorphic.
		Now we can use the SP for the diagram\\
		\begin{center}
			\begin{tikzcd}
				&   {A}_\xi 
				&
				& \\
				\mathbb{K} \ar[ur] \ar[dr]
				&
				& 
				\\
				&    {A}_\eta
				&
				&
			\end{tikzcd}
		\end{center}
		
		to get a condition stronger than ${A}_\xi$ and ${A}_\eta$.
	\end{proof}

	\section{Following Avraham, Rubin, and Shelah}
	
	\subsection{Rectangular Metric Spaces}
	We adapt the technology from \cite{as} and \cite{ars} to prove that it is relatively consistent with $\zfc$ that there exists a separable rational metric space $(X,d)$ of size $\omega_1$, such that any uncountable 1-1 function from $X$ to itself is an isometry on an uncountable subset.
	The original result states that it is relatively consistent with $\zfc$ that there exists an uncountable set $A\subseteq \mathbb{R}$, such that every function $f:A\rightarrow A$ is non-decreasing on an uncountable set. Unlike the latter statement, our result does not seem to follow from Semiopen Coloring Axiom, Open Coloring Axiom or any other combinatorial principle described in \cite{ars}. 
	
	\par We first introduce a metric analog of a $k$-increasing linear order, introduced in \cite{as}.

	\begin{defin}
		Let $(X,d)$ be a metric space. 
		\begin{itemize}
			\item We call a pair of tuples $\overline{x}=(x_1,\ldots,x_n), \overline{y}=(y_1,\ldots,y_n) \in X^n$ \emph{alike} if they satisfy the following axioms:
			\begin{itemize}
				\item[A1]	$\forall \; i,j=1,\ldots,n \; (d(x_i,y_i)=d(x_j,y_j))$
				\item[A2]	$\forall \; i,j=1,\ldots,n \; (d(x_i,x_j)=d(y_i,y_j))$
				\item[A3]	$\forall \; i,j=1,\ldots,n \; (x_i\neq x_j \implies d(x_i,x_j)=d(x_i,y_j))$
			\end{itemize}
			We then write $\overline{x}\circledast \overline{y}$.
			\item We call $(X,d)$ \emph{rectangular} if it is uncountable, and for any sequence of pairwise disjoint tuples $\{(x_1^\xi,\ldots,x_n^\xi)|\; \xi<\omega_1\} \subseteq X^n$, there are $\xi \neq \eta <\omega_1$, such that $(x_1^\xi,\ldots, x_n^\xi) \circledast (x_1^\eta,\ldots,x_n^\eta)$.
		\end{itemize}
	\end{defin}
	
	Denote by $Metr$ the class of rational metric spaces. Keeping up with the general notation, $\forcing$ is the partial order
	$$\{(Y,d)|\; Y\in[\kappa]^{<\omega}, \text{ and $(Y,d)$ is a rational metric space} \},$$
	with the ordering relation being the reversed inclusion preserving the metric.
	
	\par We begin with a technical Lemma to ensure that we can amalgamate metric spaces in a specific way.
	
	\begin{lem} \label{metralike}
		
		Let $(R,d_R), (X,d_X), (Y,d_Y)$ be finite metric spaces, such that $(X,d_X)\cap(Y,d_Y)=(R,d_R)$, and suppose that $h:(X,d_X)\rightarrow (Y,d_Y)$ is an isometric bijection, which is identity on $R$. \\
		\begin{center}
			\begin{tikzcd}
				& (X,d_X) \ar[dr,dashed] \ar[dd, "h"]
				&
				& \\
				(R,d_R) \ar[ur] \ar[dr]
				&
				& (X\cup Y,d^*)
				\\
				& (Y,d_Y) \ar[ur,dashed]
				&
				&
			\end{tikzcd}
		\end{center}
		
		Then there exists a metric $d^*$ on $X\cup Y$ extending both $d_X$ and $d_Y$, such that if $(x_1,\ldots,x_n)$ is a bijective enumeration of $X\setminus R$, then $(x_1,\ldots,x_n)\circledast (h(x_1),\ldots,h(x_n))$.

	\end{lem}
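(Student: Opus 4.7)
The plan is to let the axioms dictate most of $d^*$ and show that only one scalar parameter remains free, then check that a sufficiently small rational value of that parameter gives a genuine metric. The underlying set is $X \cup Y$, and the distances within $X$ and within $Y$ are already given (and agree on $R$). Since $h$ is the identity on $R$ and an isometry, the distance from any $r \in R$ to $y_i = h(x_i)$ equals $d_X(r, x_i)$, so distances between $R$ and $Y \setminus R$ are also pinned down by $d_X$. The only freedom lies in the distances $d^*(x_i, y_j)$ where $x_i \in X \setminus R$ and $y_j \in Y \setminus R$.

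Axiom A3, applied to the (pairwise distinct) enumeration $(x_1,\ldots,x_n)$ of $X \setminus R$, forces $d^*(x_i, y_j) = d_X(x_i, x_j)$ for $i \neq j$. Axiom A1 forces all values $d^*(x_i, y_i)$ to be equal to a single constant $c$. Axiom A2 is automatic from the fact that $h$ is an isometry. So the definition of $d^*$ is completely determined up to the choice of the positive rational $c$, and it only remains to choose $c$ so that $(X \cup Y, d^*)$ is a (rational) metric space.

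For this it suffices to verify the triangle inequality on all triples (the space is finite). Triples entirely inside $X$ or $Y$ are fine. A triple $\{x_i, y_j, z\}$ with $i \neq j$ and $z \in X \cup Y \setminus \{x_i, y_j\}$ unfolds, using the rules above, into the same triple of rational side-lengths as a triangle already realized in $X$; this is the bookkeeping step one has to go through, splitting according to whether $z$ sits in $R$, in $X \setminus R$, or in $Y \setminus R$. The genuinely new constraints come from triples of the form $\{x_i, y_i, z\}$ with $z \notin \{x_i, y_i\}$. In each such triple the construction yields $d^*(x_i, z) = d^*(y_i, z)$ (for $z \in R$ by the isometry $h$; for $z = x_j$ or $z = y_j$ with $j \neq i$ by A3 together with A2), so the triangle has two equal sides $a_z > 0$ and the third side $c$. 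This imposes the single family of inequalities $0 < c \leq 2 a_z$.

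So the main obstacle, such as it is, reduces to a choice: take $c$ to be any positive rational no larger than $2\min\{d_X(x_i, z) : i \le n,\ z \in X,\ z \neq x_i\}$ (this minimum is positive and rational since $X$ is finite). With this $c$, triangle inequality holds on every triple, $d^*$ is a rational metric extending both $d_X$ and $d_Y$, and A1--A3 hold by construction, so $(x_1,\ldots,x_n) \circledast (h(x_1),\ldots,h(x_n))$, as required.
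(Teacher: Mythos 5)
Your proof is correct and follows essentially the same route as the paper: the cross-distances $d^*(x_i,y_j)=d_X(x_i,x_j)$ for $i\neq j$ are forced by A3, and the only freedom is the common value $d^*(x_i,y_i)$, which must be taken small enough; the paper simply fixes it as $s=\min\{d_X(x,x')\mid x\neq x'\in X\}$ and leaves the triangle-inequality check as ``a standard computation.'' Your write-up supplies exactly that computation (the reduction of mixed triples to triangles already realized in $X$, and the isosceles analysis for triples containing a pair $\{x_i,y_i\}$), so it is a correct, slightly more detailed version of the same argument.
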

	
	\begin{proof}
		Let $s=\min\{ d_X(x,x')|\; x\neq x' \in X \}$. Given that $d^*$ must extend the metrics of $X$ and $Y$, we must set the distances between elements	from $X\setminus R$ and $Y\setminus R$. Therefore we set $d^*(x,h(x))=s$ and $d^*(x,h(x'))=d_X(x,x')$, for all $x\neq x' \in X \setminus R$. A standard computation shows that this definition gives a well-defined metric structure on $X\cup Y$, satisfying the required conditions.
	\end{proof}
	
	In the light of Proposition \ref{splitting}, we have the c.c.c. property for forcings $\forcing$. Even more generally, for any countable set $K\subseteq [0,\infty)$ the proof of Lemma \ref{metralike} shows that the class of finite metric spaces with distances in $K$ has the SP.
	
	\begin{prop} \label{propx1}
		For any uncountable cardinal $\kappa$, $\forcing \Vdash \text{"$(\kappa,\dot{d})$ is rectangular"}$.
	\end{prop}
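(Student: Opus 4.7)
I would argue by contradiction. Suppose some $p \in \forcing$ forces that $\{\dot{\tau}_\xi : \xi < \omega_1\}$ is a sequence of pairwise disjoint $n$-tuples from $\kappa$ with $p \Vdash \dot{\tau}_\xi \not\circledast \dot{\tau}_\eta$ for all $\xi \neq \eta$. The plan is to produce $q \leq p$ and $\xi \neq \eta$ with $q \Vdash \dot{\tau}_\xi \circledast \dot{\tau}_\eta$, giving the contradiction.

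For each $\xi$, extend $p$ to a condition $p_\xi$ that decides $\dot{\tau}_\xi = (x_1^\xi,\ldots,x_n^\xi)$, and enlarge $p_\xi$ if necessary so that $\{x_1^\xi,\ldots,x_n^\xi\} \subseteq F(p_\xi)$. Apply the $\Delta$-system lemma to $\{F(p_\xi) : \xi<\omega_1\}$ to obtain a root $R \in [\kappa]^{<\omega}$. Because there are only countably many rational metric structures on any finite set, and only finitely many isomorphism types of finite rational metric spaces of any given size, I can thin the sequence so that every $p_\xi$ restricts to the same metric on $R$, the cardinality $|F(p_\xi)\setminus R|=m$ is constant, and there are bijective enumerations $F(p_\xi)\setminus R=\{a_1^\xi,\ldots,a_m^\xi\}$ making $\mathrm{id}_R\cup\{a_i^\xi\mapsto a_i^\eta\}$ an isometry $p_\xi\to p_\eta$ for every pair. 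Since the $\dot{\tau}_\xi$ are pairwise disjoint and $R$ is finite, at most countably many $\xi$ place a coordinate of $\dot{\tau}_\xi$ in $R$; discard these, and thin once more to fix a single map $\sigma:\{1,\ldots,n\}\to\{1,\ldots,m\}$ with $x_i^\xi = a_{\sigma(i)}^\xi$ for every remaining $\xi$.

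Now pick any $\xi\neq\eta$ from the thinned sequence and apply Lemma \ref{metralike} with $R$, $X=F(p_\xi)$, $Y=F(p_\eta)$, and the isometry $h$ sending $a_i^\xi$ to $a_i^\eta$ and fixing $R$ pointwise. The lemma supplies a metric $d^*$ on $X\cup Y$ extending both $p_\xi$ and $p_\eta$, hence a common extension $q\leq p_\xi,p_\eta$ in the forcing, under which $(a_1^\xi,\ldots,a_m^\xi)\circledast(a_1^\eta,\ldots,a_m^\eta)$. A direct inspection of A1--A3 shows that $\circledast$ is inherited by sub-tuples selected by a common index pattern; applying this with $\sigma$ gives $(x_1^\xi,\ldots,x_n^\xi)\circledast(x_1^\eta,\ldots,x_n^\eta)$ in $q$, so $q \Vdash \dot{\tau}_\xi\circledast\dot{\tau}_\eta$, contradicting the choice of $p$.

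The main obstacle is the coordinated thinning: one must simultaneously fix the $\Delta$-root, the metric on it, the isomorphism type of each $p_\xi$ over $R$ together with a uniform enumeration of $F(p_\xi)\setminus R$ that witnesses the pairwise isometries, and the positions of the coordinates of $\dot{\tau}_\xi$ within that enumeration, all using only countably many parameters at each step. Once this bookkeeping is arranged, Lemma \ref{metralike} delivers exactly the $\circledast$-witnessing amalgamation demanded by the definition of rectangularity.
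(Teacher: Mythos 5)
Your argument is correct and follows essentially the same route as the paper: decide the tuples, extract a $\Delta$-system whose members are pairwise isometric over the root via maps matching the decided coordinates, and invoke Lemma \ref{metralike} to amalgamate two of them into a condition forcing the tuples to be alike. (Your parenthetical claim that there are only finitely many isomorphism types of finite rational metric spaces of a given size is false---there are countably many---but countably many is all your thinning requires, so nothing is lost.)
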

	
	\begin{proof}
		
		Let $\{(\dot{x}_1^\xi,\ldots, \dot{x}_n^\xi)|\;\xi<\omega_1 \}$ be a sequence of $\forcing$-names for pairwise disjoint $n$-tuples from $(\kappa,\dot{d})$. Fix a condition $p$. For every $\xi <\omega_1$, we find a condition $p_\xi=(p_\xi,d_\xi) \le p$, deciding values of $\dot{x}^\xi_i$ and $\dot{d} \restriction \{ x^\xi_1,
		\ldots, x^\xi_n \} \times \{ x^\xi_1,
		\ldots, x^\xi_n \}$. We choose an uncountable set $S \subseteq \kappa$, satisfying the following conditions:
		\begin{itemize}
			\item $\{p_\xi|\; \xi \in S\}$ is a $\Delta$-system with the root $R=(R,d_R)$,
			\item $\forall \; \xi, \eta \in S$ there exists an isometry $h:p_\xi \rightarrow p_\eta$, which is identity on $R$, and $h(x^\xi_i)=x^\eta_i$, for $i=1,\ldots,n$, as shown in the diagram.
			\begin{center}
				\begin{tikzcd}
					& (p_\xi,d_\xi)  \ar[dd, "h"]
					&
					& \\
					(R,d_R) \ar[ur] \ar[dr]
					&
					& 
					\\
					& (p_\eta,d_\eta) 
					&
					&
				\end{tikzcd}
			\end{center}
		\end{itemize}
		
		This can be easily done, since given any point $\alpha \in \omega_1$ outside of $R$, there are only countably many possible configurations of distances between this point and $R$. Now we choose $\xi \neq \eta \in S$, and apply Lemma \ref{metralike} to obtain $q\le p_\xi,p_\eta$, which forces that $(x^\xi_1,\ldots, x^\xi_n) \circledast (x^\eta_1,\ldots,x^\eta_n)$. 
	\end{proof}

	Let $\metr$ be the metric space we added to our model by $\forcing$. Our next task is to force $\ma_{\omega_1}$, while preserving $\metr$ being rectangular. Following the ideas from \cite{as}, we introduce a special class of partial orders.
	
	\begin{defin} \label{defappropriate}
		A partial order $\mathbb{P}$ satisfies $\metr$\emph{-c.c.} if given any natural number $n>0$, for each family consisting of pairwise disjoint tuples 
		$$\{(p_\xi,x_1^\xi,\ldots,x_n^\xi)|\; \xi < \omega_1 \} \subseteq \mathbb{P} \times \metr^n,$$
		there exist $\xi\neq \eta <\omega_1$, such that $p_\xi$ and $p_\eta$ are comparable, and
		$$(x_1^\xi,\ldots,x_n^\xi) \circledast (x_1^\eta,\ldots,x_n^\eta).$$
	\end{defin}
	
	\begin{prop} \label{propx4}
		If $\mathbb{P}$ satisfies $\metr${-c.c.} then $\mathbb{P} \Vdash \text{"$\metr$ is rectangular"}$.	
	\end{prop}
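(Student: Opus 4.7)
The plan is to transfer the density statement in the extension back to the ground-model hypothesis, with a $\Delta$-system argument to recover pairwise disjointness among the witnesses.

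Start with a $\mathbb{P}$-name $\{(\dot{x}_1^\xi,\ldots,\dot{x}_n^\xi)\mid \xi<\omega_1\}$ for a sequence of pairwise disjoint $n$-tuples in $\metr$, and fix a condition $p\in \mathbb{P}$. Since $\metr$ lies in the ground model, for each $\xi$ I can extend $p$ to some $p_\xi \le p$ that decides each $\dot{x}_i^\xi$ to be a specific ground-model point $x_i^\xi\in \mathcal{M}$. This produces a ground-model family $\{(p_\xi,x_1^\xi,\ldots,x_n^\xi)\mid \xi<\omega_1\}\subseteq \mathbb{P}\times\mathcal{M}^n$.

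The next step is to refine to an uncountable subfamily of genuinely pairwise disjoint tuples, so that $\metr$-c.c. can be applied. First note that $\mathbb{P}$ is c.c.c.: this is just $\metr$-c.c. applied with $n=1$ and pairwise distinct chosen points, since singleton tuples are automatically pairwise disjoint. Now apply the $\Delta$-system lemma to $\{\{x_1^\xi,\ldots,x_n^\xi\}\mid \xi<\omega_1\}$ to obtain an uncountable $S\subseteq \omega_1$ along which these sets form a $\Delta$-system with root $R$. I claim $R=\emptyset$: if $x^*\in R$, then for every $\xi\neq \eta$ in $S$, any common extension of $p_\xi$ and $p_\eta$ would be below $p$ and force both the $\xi$-tuple and the $\eta$-tuple to contain $x^*$, contradicting that $p$ forces these tuples to be disjoint. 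Hence $p_\xi$ and $p_\eta$ are incompatible for all $\xi\neq\eta\in S$, making $\{p_\xi\mid \xi\in S\}$ an uncountable antichain and contradicting c.c.c.

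With $R=\emptyset$, the family $\{(p_\xi,x_1^\xi,\ldots,x_n^\xi)\mid \xi\in S\}$ is pairwise disjoint in $\mathbb{P}\times \mathcal{M}^n$, so the $\metr$-c.c. hypothesis yields $\xi\neq \eta\in S$ for which $p_\xi$ and $p_\eta$ are compatible and $(x_1^\xi,\ldots,x_n^\xi)\circledast (x_1^\eta,\ldots,x_n^\eta)$. A common extension $q\le p_\xi,p_\eta$ then forces $(\dot{x}_1^\xi,\ldots,\dot{x}_n^\xi)\circledast (\dot{x}_1^\eta,\ldots,\dot{x}_n^\eta)$, giving the required instance of rectangularity. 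The only nontrivial step is the disjointness recovery, and it is really just a use of c.c.c.\ applied to the forced-disjointness of the tuples; the rest is an unwinding of the definitions.
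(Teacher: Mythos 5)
Your proof is correct and takes essentially the same route as the paper's: extend $p$ to conditions $p_\xi$ deciding the names, apply $\metr$-c.c.\ to the resulting ground-model family $\{(p_\xi,x_1^\xi,\ldots,x_n^\xi)\mid\xi<\omega_1\}$, and read off a common extension forcing the $\circledast$ relation. The only difference is that you explicitly verify the pairwise-disjointness hypothesis for the decided tuples via the $\Delta$-system/c.c.c.\ argument, a point the paper's one-line proof leaves implicit; that step is a genuine (and correct) gap-filling addition rather than a change of method.
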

	
	\begin{proof}
		Fix a sequence of $\mathbb{P}$-names $\{ (\dot{x}_1^\xi,\ldots,\dot{x}_n^\xi)|\; \xi <\omega_1 \}\subseteq \metr^n$ for pairwise disjoint $n$-tuples. For a given condition $p\in \mathbb{P}$, and $\xi<\omega_1$, we fix a condition $p_\xi \le p$ deciding $(\dot{x}_1^\xi,\ldots,\dot{x}_n^\xi)$. Then we apply $\metr${-c.c.} to the family $\{(p_\xi,x_1^\xi,\ldots,x_n^\xi)|\; \xi <\omega_1 \}$. This way we obtain $\xi \neq \eta < \omega_1$, and $q \le p_\xi,p_\eta$, such that $q \Vdash 
		(\dot{x}_1^\xi,\ldots,\dot{x}_n^\xi) \circledast (\dot{x}_1^\eta,\ldots,\dot{x}_n^\eta)$.
	\end{proof}
	
	The argument showing that $\metr${-c.c.} is preserved under iterations is really not different than the one showing that the c.c.c. is preserved, applied for example in \cite{kunen}. We include it for completeness.
	
	\begin{prop} \label{propx5}
		Finite support iterations of $\metr${-c.c.} posets are $\metr${-c.c.}.
	\end{prop}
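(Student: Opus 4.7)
My plan is to induct on the length $\alpha$ of the iteration, closely following Kunen's proof that finite support iterations preserve c.c.c., threading the metric tuples through the standard argument. Before starting I would observe that $\metr$-c.c.\ strengthens c.c.c.: applied with $n=1$ to pairwise distinct singletons (which are automatically $\circledast$-related) it rules out uncountable antichains. Hence each intermediate $\mathbb{P}_\beta$ is c.c.c., which I will use to bound names for countable sets in the ground model.

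For the successor step I would prove the two-step version: if $\mathbb{P}$ is $\metr$-c.c.\ and $\mathbb{P}\Vdash$``$\dot{\mathbb{Q}}$ is $\metr$-c.c.'', then $\fors$ is $\metr$-c.c. Given a family $\{((p_\xi,\dot{q}_\xi),\bar{x}^\xi):\xi<\omega_1\}$ in $\fors\times\metr^n$ with pairwise disjoint tuples, I would assume toward contradiction that no two indices produce compatible first coordinates with $\circledast$-related second coordinates. For a $\mathbb{P}$-generic $\dot{G}$, consider $\dot{A}=\{\xi:p_\xi\in\dot{G}\}$; then in $V[\dot{G}]$ the family $\{(\dot{q}_\xi[\dot{G}],\bar{x}^\xi):\xi\in\dot{A}\}$ witnesses the failure of $\metr$-c.c.\ for $\dot{\mathbb{Q}}[\dot{G}]$, since any compatibility in $V[\dot{G}]$ would descend via a witness in $\dot{G}$ to compatibility of the two-step conditions. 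Consequently $\dot{A}$ is forced to be countable, and by c.c.c.\ of $\mathbb{P}$ it is contained in a single countable $B\subseteq\omega_1$ from the ground model. But for $\xi\notin B$ the condition $p_\xi$ itself forces $\xi\in\dot{A}$, a contradiction.

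For a limit ordinal $\alpha$ I would split by cofinality. If $\operatorname{cf}(\alpha)>\omega$, finiteness of supports forces uncountably many $p_\xi$ to have support below some $\beta<\alpha$, and induction on $\mathbb{P}_\beta$ finishes. If $\operatorname{cf}(\alpha)=\omega$, I would apply the $\Delta$-system lemma to the supports, refine the family to an uncountable subfamily with root $\sigma$, and fix $\gamma<\alpha$ with $\sigma\subseteq\gamma$. The restricted family $\{(p_\xi\restriction\gamma,\bar{x}^\xi)\}$ lies in $\mathbb{P}_\gamma\times\metr^n$ with the tuples still pairwise disjoint, so induction on $\mathbb{P}_\gamma$ yields $\xi\neq\eta$ and $r\in\mathbb{P}_\gamma$ with $r\le p_\xi\restriction\gamma,p_\eta\restriction\gamma$ and $\bar{x}^\xi\circledast\bar{x}^\eta$. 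Since $\operatorname{supp}(p_\xi)\setminus\gamma$ and $\operatorname{supp}(p_\eta)\setminus\gamma$ are disjoint by the $\Delta$-system property, $r$ extends coordinatewise to a common extension of $p_\xi$ and $p_\eta$ in $\mathbb{P}_\alpha$.

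The delicate step is the successor case, and in particular the reflection from a putative counterexample in $V[\dot{G}]$ back to compatibility of two-step conditions in $V$. This hinges on the absoluteness of $\circledast$, which is fine since $\metr\in V$ and the relation is a Boolean combination of equalities between rational distances; so the tuples can be moved between $V$ and $V[\dot{G}]$ without change, and the rest of the argument is the standard Kunen-style bookkeeping with $\dot{A}$.
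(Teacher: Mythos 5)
Your proposal is correct and follows essentially the same route as the paper: the successor step is the same reflection argument (your $\dot{A}$ is the paper's $\dot{\sigma}$, forced countable because an uncountable $\dot{A}$ would contradict $\dot{\mathbb{Q}}$ being $\metr$-c.c., then contradicted by $p_\xi\Vdash\xi\in\dot{A}$ for $\xi$ outside the ground-model bound), and the limit step is the same $\Delta$-system-on-supports reduction to an initial segment. One small slip: your claim that for $\operatorname{cf}(\alpha)>\omega$ uncountably many supports lie below a single $\beta<\alpha$ fails when $\operatorname{cf}(\alpha)=\omega_1$ (e.g. singleton supports cofinal in $\omega_1$), but this is harmless since your $\Delta$-system argument for the root $\sigma\subseteq\gamma$ works uniformly at every limit ordinal and no cofinality split is needed.
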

	
	\begin{proof}
		
		Assume that $\mathbb{P}$ is $\metr${-c.c.}, and $\mathbb{P}\Vdash \text{"$\dot{\mathbb{Q}}$ is $\metr${-c.c.}"}$. Take a sequence 
		$$\{ (p_\xi,\dot{q}_\xi,x_1^\xi,\ldots,x_n^\xi)|\; \xi<\omega_1 \},$$
		and towards contradiction assume that it witnesses $\mathbb{P}\ast \dot{\mathbb{Q}}$ not being $\metr${-c.c.}. Let $\dot{\sigma}$ be a $\mathbb{P}$-name defined $\dot{\sigma}=\{(\xi,p_\xi)|\; \xi <\omega_1\}$. If $G\subseteq \mathbb{P}$ is an $M$-generic filter, then
		$$M[G]\models (\xi \in \sigma \iff p_\xi \in G).$$
		We claim that in $M[G]$, for any two conditions $\eta,\xi \in \sigma$, if $(x_1^\xi,\ldots,x_n^\xi)\circledast (x_1^\eta,\ldots,x_n^\eta)$, then $q_\xi$ and $q_\eta$ are inconsistent. For otherwise, there exist $q\le q_\eta,q_\xi$, and $p \in G$, which forces it. Since $p_\xi$ and $p_\eta$ are in $G$, which is a filter, we may choose $p\le p_\xi,p_\eta$. Then $(p,\dot{q})\le (p_\xi,\dot{q}_\xi),(p_\eta,\dot{q}_\eta)$, and $(x_1^\xi,\ldots,x_n^\xi)\circledast (x_1^\eta,\ldots,x_n^\eta)$ contrary to the choice of the sequence $\{(p_\xi,\dot{q}_\xi,x_1^\xi,\ldots,x_n^\xi)|\; \xi<\omega_1 \}$.
		
		\par $G$ was an arbitrary generic filter, and remember that $\mathbb{P}\Vdash \text{"$\dot{\mathbb{Q}}$ is $\metr${-c.c.}"}$. The conclusion of this is that $\mathbb{P} \Vdash |\dot{\sigma}|< \omega_1$. Then there exists a $\mathbb{P}$-name for a countable ordinal $\dot{a}$, for which
		$\mathbb{P}\Vdash \dot{\sigma}\subseteq \dot{a}$. Since $\mathbb{P}$ is c.c.c. there are only countably many possible values of $\dot{a}$, and by taking supremum of them, we can replace $\dot{a}$ by a canonical name $a$. But note that $p_{a}\Vdash a \in \dot{\sigma}$. This is a contradiction, and it finishes the proof for $\mathbb{P}\ast \dot{\mathbb{Q}}$.
		
		\par Consider now a finite support iteration of an infinite length $\theta$ of $\metr${-c.c.} forcings $\overline{\mathbb{P}}=\{\mathbb{P}_\alpha\ast \dot{\mathbb{Q}}_\alpha|\; \alpha < \theta\}$. We prove by induction on $\theta$, that $\overline{\mathbb{P}}$ is $\metr${-c.c.}. The successor step has just been taken care of, so suppose that the conclusion holds for any ordinal less than $\theta$, and $\theta$ is limit. Take any disjoint sequence $\{(\overline{p}_\alpha,\overline{x}_\alpha) |\; \alpha<\omega_1\} \subseteq \overline{\mathbb{P}}\times \metr^n$. We can assume that the supports of conditions $\overline{p}_\alpha$ form a $\Delta$-system with the root $R$, and that for any $\alpha$, $\operatorname{supp}{\overline{p}_\alpha}\setminus R$ is above $R$. Let $\delta = \max{R}$. There exist two different $\alpha,\beta < \omega_1$, for which the relation $\overline{x}_\alpha\circledast \overline{x}_\beta$ holds and $\overline{p}_\alpha \restriction \delta$ is comparable with $\overline{p}_\beta\restriction \delta$. From the general theory of finite support iterations it follows that $\overline{p}_\alpha$ and $\overline{p}_\beta$ are comparable.
	\end{proof}
	
	It may look suspicious that in the proof above we never actually used $\mathbb{P}$ being $\metr${-c.c.}, only c.c.c. However one can verify that if some c.c.c. forcing forces a poset to be $\metr${-c.c.}, then it must be $\metr${-c.c.} itself. So, we could have just as well assume that $\mathbb{P}$ is c.c.c. The immediate consequence of this proposition is
	
	\begin{lem}
		It is consistent with $\zfc + \ma(\text{$\metr${-c.c.}}) + ``2^{\omega}=\omega_2"$ that $\metr$ is rectangular.
	\end{lem}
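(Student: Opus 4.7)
The plan is a two-stage forcing construction over a ground model $V$ satisfying $GCH$: stage one adds the metric space itself, while stage two iterates $\metr$-c.c.\ posets of length $\omega_2$ to force the restricted Martin's Axiom while preserving rectangularity.

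Concretely, I would first force with $\mathbb{P}_0 := \operatorname{Fn}(\omega_2,Metr,\omega)$ over $V$. By Proposition \ref{splitting} this poset is c.c.c., and by Proposition \ref{propx1} the generic structure $(\omega_2,\dot{d})$, henceforth denoted $\metr$, is rectangular in $V[G_0]$; a nice-name count yields $(2^\omega)^{V[G_0]} = \omega_2$. Working in $V[G_0]$, I would then build a finite-support iteration $\langle \mathbb{P}_\alpha, \dot{\mathbb{Q}}_\alpha : \alpha < \omega_2 \rangle$ in which each $\dot{\mathbb{Q}}_\alpha$ is a $\mathbb{P}_\alpha$-name for an $\metr$-c.c.\ poset of cardinality at most $\omega_1$, chosen by a standard bookkeeping function so that every $\metr$-c.c.\ poset of size at most $\omega_1$ arising in the final extension is processed at some stage. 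Proposition \ref{propx5} guarantees that every $\mathbb{P}_\alpha$ is $\metr$-c.c., in particular c.c.c., so cardinals are preserved.

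In $V[G_0 * G]$, Proposition \ref{propx4} applied at each stage shows that $\metr$ is rectangular in every $V[G_0 * G_\alpha]$ for $\alpha < \omega_2$. For the full extension, I would use a reflection argument at the $\omega_2$-limit: any putative sequence $\{(x_1^\xi,\ldots,x_n^\xi) : \xi<\omega_1\}$ witnessing non-rectangularity has a nice name using only $\omega_1$-many conditions, whose supports collectively lie in some bounded initial segment $\alpha<\omega_2$, placing the sequence already in $V[G_0 * G_\alpha]$ and yielding a contradiction. Thus $\metr$ is rectangular, $2^\omega=\omega_2$, and $MA_{\omega_1}$ for $\metr$-c.c.\ posets holds.

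The main technical point is to arrange the bookkeeping so that every $\metr$-c.c.\ poset of size at most $\omega_1$ appearing in the final model is captured by a nice name at some intermediate stage and is recognised as $\metr$-c.c.\ there. This uses the downward absoluteness of the $\metr$-c.c.\ property: since $\metr \in V[G_0]$ is fixed and ``$\circledast$'' is an absolute relation between tuples, any $\omega_1$-sequence in an intermediate model is also present in the final model, so $\metr$-c.c.\ in the final model transfers down. Given this, Kunen's scheme (\cite{kunen}, Ch.\ VIII) for the consistency of $MA_{\omega_1}$ adapts essentially verbatim, with ``c.c.c.''\ replaced throughout by ``$\metr$-c.c.''.
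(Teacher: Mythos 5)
Your proposal is correct and is exactly the argument the paper intends: the lemma is stated there as an ``immediate consequence'' of Proposition \ref{propx5}, with the standard Solovay--Tennenbaum bookkeeping iteration of $\metr$-c.c.\ posets over a $\operatorname{Fn}(\kappa,Metr,\omega)$-extension left implicit, and you have supplied precisely those details (your reflection argument at the $\omega_2$-limit is even slightly redundant, since Proposition \ref{propx5} already makes the whole iteration $\metr$-c.c., so Proposition \ref{propx4} applies directly to $\mathbb{P}_{\omega_2}$).
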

	
	In fact, full Martin's Axiom will hold in such model, what we are going to show. Recall that if $\mathbb{P}$ is a forcing notion, a set $D\subseteq \mathbb{P}$ is \emph{predense} if each element of $\mathbb{P}$ is comparable with an element of $D$. A set $D$ is \emph{predense below $p$} if each element of $\mathbb{P}$ stronger that $p$ is comparable with an element of $D$.
	
	\begin{lem} \label{predense}
		Let $\mathbb{P}$ be a c.c.c. forcing notion of size $\omega_1$. There exists $p \in \mathbb{P}$ and a family of $\omega_1$ many subsets of $\mathbb{P}$, predense below $p$, such that any filter $G \subseteq \mathbb{P}$ containing $p$ and intersecting all of them is uncountable.
	\end{lem}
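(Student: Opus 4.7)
The plan is to extract the required $p$ and predense sets from a $\mathbb{P}$-name for an injective enumeration of the generic filter. Before starting I would assume $\mathbb{P}$ is separative; otherwise, either the separative quotient has size $\omega_1$ (in which case I argue there and lift to $\mathbb{P}$ via inverse images under the quotient map), or the quotient is countable (in which case pigeonholing against $|\mathbb{P}|=\omega_1$ yields a $\le$-equivalence class $C\subseteq\mathbb{P}$ of size $\omega_1$, and the lemma is immediate: take $p\in C$ together with $\omega_1$ singletons $\{c\}$, $c\in C\setminus\{p\}$, as the predense sets).

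With $\mathbb{P}$ separative, the first real task is to locate a condition $p$ with $p\Vdash|\dot{G}|=\omega_1$. I would argue by contradiction: assuming $\mathbb{P}\Vdash|\dot{G}|\le\aleph_0$, fix a name $\dot{f}:\omega\to\mathbb{P}$ enumerating $\dot{G}$. By the c.c.c., each $\dot{f}(n)$ is decided by some countable maximal antichain into concrete values, so the set $R\subseteq\mathbb{P}$ of all such potential values is countable. For any $p\in\mathbb{P}$ we have $p\Vdash\check{p}\in\dot{G}\subseteq\check{R}$, and since $R$ lies in the ground model this forces $p\in R$. Hence $\mathbb{P}\subseteq R$ is countable, contradicting $|\mathbb{P}|=\omega_1$. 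Fixing such a $p$, I would pick a name $\dot{h}$ forced by $p$ to be an injection $\omega_1\hookrightarrow\dot{G}$, and for each $\alpha<\omega_1$ let $D_\alpha$ be a maximal antichain below $p$ of conditions $q$ deciding $\dot{h}(\alpha)$, writing $q\Vdash\dot{h}(\alpha)=f_\alpha(q)$. Each $D_\alpha$ is countable by the c.c.c.\ and predense below $p$.

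To finish: suppose $G$ is any filter with $p\in G$ meeting every $D_\alpha$, say at $q_\alpha\in G\cap D_\alpha$. Separativity gives $q_\alpha\le f_\alpha(q_\alpha)$, since $q_\alpha\Vdash f_\alpha(q_\alpha)\in\dot{G}$, so $f_\alpha(q_\alpha)\in G$ by upward closure. These values are pairwise distinct for $\alpha<\omega_1$: an equality $f_\alpha(q_\alpha)=f_\beta(q_\beta)=r$ with $\alpha\ne\beta$ would force both $q_\alpha$ and $q_\beta$ to make $\dot{h}$ attain the value $r$ at two different coordinates, violating injectivity, so $q_\alpha\perp q_\beta$, contradicting their coexistence in $G$. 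Hence $|G|=\omega_1$. The hard part will be the first step---the pigeonhole argument producing $p$---since everything downstream is a routine translation between names and filters; the subtlety there is the careful interplay between the ground-model set $R$ and the forcing relation.
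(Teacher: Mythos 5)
Your core argument is a genuinely different route from the paper's, and for \emph{separative} $\mathbb{P}$ it is correct: the pigeonhole step producing $p$ with $p\Vdash|\dot G|=\omega_1$ (via the countable set $R$ of potential values of $\dot f$), the antichains $D_\alpha$ deciding $\dot h(\alpha)$, and the distinctness of the $f_\alpha(q_\alpha)$ all check out, as does the countable-quotient branch of your reduction. The gap is in the remaining branch: lifting from the separative quotient $Q$ back to $\mathbb{P}$ via inverse images. The sets $\pi^{-1}[\bar D_\alpha]$ are indeed predense below any $p\in\pi^{-1}(\bar p)$, but if $G\subseteq\mathbb{P}$ is a filter containing $p$ and meeting all of them, you only get that $\pi[G]$ is a directed subset of $Q$ meeting every $\bar D_\alpha$; it is the \emph{upward closure} of $\pi[G]$ that is a filter in $Q$, and your quotient result makes that upward closure uncountable --- not $\pi[G]$, and not $G$. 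This is precisely the problem that made you assume separativity in the first place: $q\Vdash\check r\in\dot G$ gives only $q\le^* r$, and an arbitrary filter on $\mathbb{P}$ is upward closed under $\le$ but not under $\le^*$, so the $\omega_1$ distinct elements your argument produces need not belong to $G$; if some single condition of $\pi[G]$ lies in uncountably many $\bar D_\alpha$, nothing prevents $G$ from being countable. The repair is easy and makes the separativity detour unnecessary: work directly in $\mathbb{P}$ and use $E_\alpha=\{r\le p\mid \exists q\in D_\alpha\,(r\le q\ \text{and}\ r\le f_\alpha(q))\}$, which is predense below $p$ because any $t\le q\in D_\alpha$ satisfies $t\le^* f_\alpha(q)$ and so has an extension below $f_\alpha(q)$; a filter meeting $E_\alpha$ then contains $f_\alpha(q)$ by genuine upward closure, and the rest of your argument goes through verbatim.

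For comparison, the paper's proof is purely combinatorial and never mentions names or generic filters: enumerate $\mathbb{P}=\{p_\gamma\mid\gamma<\omega_1\}$, take the final segments $D_\alpha=\{p_\gamma\mid\alpha<\gamma<\omega_1\}$, and show that some $p$ has uncountably many $D_\alpha$ predense below it --- otherwise one recursively extracts an uncountable antichain, contradicting the c.c.c. A filter containing such a $p$ and meeting uncountably many final segments contains conditions of unboundedly large index and is therefore uncountable. That route is shorter and immune to the separativity issue; yours, once patched as above, has the merit of explaining the lemma as the statement that the generic filter itself is forced to be uncountable.
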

	
	\begin{proof}
		Enumerate bijectively $\mathbb{P}$ as $\{p_\gamma|\; \gamma<\omega_1\}$. Let $D_\alpha= \{p_\gamma|\; \alpha<\gamma<\omega_1\}$. We aim to find $p \in \mathbb{P}$ such that uncountably many of sets $D_\alpha$ are predense below $p$ -- clearly this will finish the proof. If $p$ with this property doesn't exist, then the following assertion holds:
		
		$$\forall{\gamma<\omega_1}\; \exists{\gamma'>\gamma}\;\text{"$D_{\gamma'}$ is not predense below $p_\gamma$"},$$
		and consequently
		
		$$\forall{\gamma<\omega_1}\; \exists{p_\gamma'\le p_\gamma}\; \exists{\gamma'>\gamma}\; \forall{\eta>\gamma'}\; p_\gamma'\bot p_\eta.$$
		
		Using this, we can easily define an uncountable antichain $\{p_\gamma|\; \gamma \in E\}$, ensuring at each step of induction that
		$$\forall{\gamma \in E}\; \exists{\gamma'>\gamma}\; \forall{\eta>\gamma'}\; p_\gamma\bot p_\eta.$$
	\end{proof}
	
	\begin{lem} \label{metrknaster}
		$\ma_{\omega_1}(\text{$\metr${-c.c.}})$ implies that any family of pairwise disjoint tuples\\ $\{(x_1^\xi,\ldots,x_n^\xi)|\; \xi<\omega_1 \}\subseteq \metr^n$ contains an uncountable subfamily of pairwise alike tuples.
	\end{lem}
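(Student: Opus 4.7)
My plan is to encode the desired pairwise-alike family as the union of a filter in a suitable forcing, verify that this forcing lies in the class of $\metr${-c.c.}\ posets, and then use Lemma \ref{predense} together with $MA_{\omega_1}(\metr${-c.c.}$)$ to extract an uncountable filter. Let $\mathbb{P}$ consist of all finite $F \subseteq \omega_1$ such that for every $\xi \neq \eta$ in $F$ one has $(x_1^\xi,\ldots,x_n^\xi) \circledast (x_1^\eta,\ldots,x_n^\eta)$, ordered by reverse inclusion. Any uncountable filter $G \subseteq \mathbb{P}$ then produces the desired subfamily, namely $\{(x_1^\xi,\ldots,x_n^\xi) : \xi \in \bigcup G\}$: any two indices of $\bigcup G$ lie together in a single condition of $G$ by the filter property and so are alike, and $\bigcup G$ is uncountable since a countable set has only countably many finite subsets.

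The core of the argument, which I expect to be the main obstacle, is showing $\mathbb{P}$ is $\metr${-c.c.} Starting from an $\omega_1$-sequence $\{(F_\xi, y_1^\xi,\ldots,y_m^\xi)\} \subseteq \mathbb{P} \times \metr^m$ with pairwise disjoint $y$-tuples, I would use the $\Delta$-system lemma and standard pigeonhole to pass to an uncountable $S \subseteq \omega_1$ on which $\{F_\xi\}_{\xi \in S}$ is a $\Delta$-system with root $R$, the sets $F_\xi \setminus R$ all have the same cardinality $k$, and uniform enumerations $F_\xi \setminus R = \{g_1^\xi, \ldots, g_k^\xi\}$ are fixed. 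Form the concatenated tuples
$$T_\xi = (x_1^{g_1^\xi}, \ldots, x_n^{g_1^\xi}, \ldots, x_1^{g_k^\xi}, \ldots, x_n^{g_k^\xi}, y_1^\xi, \ldots, y_m^\xi)$$
of common length $N = nk + m$. Because the original $n$-tuples indexed by distinct points of $\omega_1$ are pairwise disjoint and the $y$-tuples are pairwise disjoint, each point of $\metr$ appears in at most two of the $T_\xi$, so a transfinite recursion thins $S$ further to an uncountable subfamily on which the $T_\xi$ are pairwise disjoint. Rectangularity of $\metr$ then yields $\xi \neq \eta$ with $T_\xi \circledast T_\eta$. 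Its last $m$ coordinates give $(y_1^\xi,\ldots,y_m^\xi) \circledast (y_1^\eta,\ldots,y_m^\eta)$, and the compatibility of $F_\xi$ and $F_\eta$ in $\mathbb{P}$ reduces to showing $F_\xi \cup F_\eta \in \mathbb{P}$. The only nontrivial case is a pair of indices $(g_i^\xi, g_j^\eta)$ with one side from each; for this, each of A1--A3 on the associated $n$-tuples can be derived by combining the corresponding instance of A1--A3 for $T_\xi \circledast T_\eta$ with the alikeness already present within $F_\xi$ or $F_\eta$, using the pairwise disjointness of the original $n$-tuples to secure the ``$\neq$'' hypothesis of A3 wherever it is needed. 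I expect this A1--A3 bookkeeping to be the step most prone to error.

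With $\metr${-c.c.}\ of $\mathbb{P}$ established, $\mathbb{P}$ is in particular c.c.c.\ and of cardinality at most $\omega_1$, so Lemma \ref{predense} supplies a condition $p \in \mathbb{P}$ together with a family $\{E_\alpha : \alpha < \omega_1\}$ of sets predense below $p$ such that any filter through $p$ meeting all the $E_\alpha$ is uncountable. Applying $MA_{\omega_1}(\metr${-c.c.}$)$ to $\mathbb{P}$ below $p$ with this family produces such a filter $G$, and $\{(x_1^\xi,\ldots,x_n^\xi) : \xi \in \bigcup G\}$ is the required uncountable pairwise alike subfamily.
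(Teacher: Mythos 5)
Your proposal is correct and follows essentially the same route as the paper: the same poset of finite pairwise-alike index sets, the same $\Delta$-system-plus-concatenation argument applying rectangularity of $\metr$ to the long tuples $T_\xi$, and the same appeal to Lemma \ref{predense} with $MA_{\omega_1}(\metr\text{-c.c.})$ to extract an uncountable filter. The A1--A3 ``shift-invariance'' bookkeeping you deferred is exactly the displayed computation in the paper's proof and goes through as you describe, with the pairwise disjointness of the original tuples supplying the $x_i\neq x_j$ hypotheses of A3; you are in fact slightly more careful than the paper in thinning the family so that the concatenated tuples are pairwise disjoint before invoking rectangularity.
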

	
	\begin{proof}
		Let $\mathbb{P}=\{ F \in [\omega_1]^{<\omega}|\; \forall{\xi\neq \eta \in F}\; \overline{x}_\xi \circledast \overline{x}_\eta  \}$, where $\overline{x}_\xi=(x_1^\xi,\ldots,x_n^\xi)$. The ordering is given by the reversed inclusion. We claim that $\mathbb{P}$ is $\metr${-c.c.}. Fix an uncountable family $\{F_\alpha|\; \alpha<\omega_1 \} \subseteq \mathbb{P}$, and a family of tuples $\{(v^\alpha_1,\ldots,v^\alpha_s)|\; \alpha<\omega_1\} \subseteq \metr^s$. Without loss of generality we may assume that $$F_\alpha=\{e_1,\ldots,e_k,e^\alpha_{k+1},\ldots,e^\alpha_{k+m} \},$$ where sets $\{e^\alpha_{k+1},\ldots,e^\alpha_{k+m} \}$ are pairwise disjoint for different $\alpha$. For each $\alpha$, let $\overline{y}_\alpha \in \metr^{n\cdot m+s}$ be a concatenation of all tuples $\overline{x}_{e^\alpha_{k+1}},\ldots,\overline{x}_{e^\alpha_{k+m}}$, and $(v^\alpha_1,\ldots,v^\alpha_s)$. There exists $\alpha \neq \beta <\omega_1$, such that $\overline{y}_\alpha \circledast \overline{y}_\beta$, and we claim that they witness the fact that $\mathbb{P}$ is $\metr${-c.c.}. Let us write:
		
		$$\overline{y}_\alpha=(x_1^1,\ldots, x^1_n,x^2_1,\ldots,x^2_n,\ldots,x^m_1,\ldots,x^m_n,v_1,\,\ldots,v_s),$$
		$$\overline{y}_\beta=(y_1^1,\ldots, y^1_n,y^2_1,\ldots,y^2_n,\ldots,y^m_1,\ldots,y^m_n,u_1,\,\ldots,u_s).$$
		
		What is clear, is that $(v_1,\ldots,v_s) \circledast (u_1,\ldots,u_s)$, and for all $k\le m$, $(x_1^k,\ldots,x_n^k)\circledast (y_1^k,\ldots, y_n^k)$. What is not clear, is that in this case the $\circledast$ relation is "shift-invariant", i.e. for all $1\le p\neq r \le m$ we have $(x_1^p,\ldots,x_n^p)\circledast (y_1^r,\ldots, y_n^r)$. We will check that this is the case.
		
		\begin{enumerate}
			
			\item [A1] For all $1\le r\neq p \le m$, and $1\le i,j\le n$, we have
			\begin{align*} 
				d(x_i^p,y_i^r)=d(x_i^p,x_i^r) & \text{  by A3 for $\overline{y}_\alpha$ and $\overline{y}_\beta$,} \\
				d(x_i^p,x_i^r)=d(x_j^p,x_j^r) & \text{  by A1 for $(x_1^p,\ldots,x_n^p)$ and $(x_1^r,\ldots,x_n^r)$,} \\
				d(x_j^p,x_j^r)=d(x_j^p,y_j^r) & \text{  by A3 for $\overline{y}_\alpha$ and $\overline{y}_\beta$}.
			\end{align*}
			
			\item [A2] For all $1\le r\neq p \le m$, and $1\le i,j\le n$, we have
			\begin{align*}
				d(x_i^p,x_j^p)=d(x_i^r,x_j^r) & \text{  by A2 for $(x_1^p,\ldots,x_n^p)$, $(x_1^r,\ldots,x_n^r)$,} \\
				d(x_i^r,x_j^r)=d(y_i^r,y_j^r) & \text{  by A2 for $\overline{y}_\alpha$, $\overline{y}_\beta$.} \\
			\end{align*}

			\item[A3] If $x^r_i \neq x^r_j$ then 
			\begin{align*}
				d(x_i^r,x_j^r)=d(x_i^r,x_j^p) & \text{  by A3 for $(x_1^p,\ldots,x_n^p)$, $(x_1^r,\ldots,x_n^r)$,} \\
				d(x_i^r,x_j^p)=d(x_i^r,y_j^p) & \text{  by A3 for $\overline{y}_\alpha$, $\overline{y}_\beta$.} \\
			\end{align*}
			
		\end{enumerate}
		
		Therefore $F_\alpha\cup F_\beta \in \mathbb{P}$, and this concludes the proof that $\mathbb{P}$ is $\metr${-c.c.}. Notice that all singletons belong to $\mathbb{P}$, so $|\mathbb{P}| = \omega_1$. Applying Martin's Axiom to the family of predense sets given by the Lemma \ref{predense}, we find an uncountable filter $G\subseteq \mathbb{P}$. The set $\{\overline{x}_\alpha|\; \alpha \in \bigcup G \}$ is an uncountable family of tuples, and each two of them are alike.
	\end{proof}
	
	\begin{prop}
		$\ma_{\omega_1}(\text{$\metr${-c.c.}})$ implies that any c.c.c. partial order of size $\omega_1$ satisfies $\metr${-c.c.}.
	\end{prop}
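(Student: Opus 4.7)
The plan is to reduce the statement to a direct combination of Lemma \ref{metrknaster} and the c.c.c.\ hypothesis on $\mathbb{P}$, without invoking Martin's Axiom a second time. Lemma \ref{metrknaster} already tells us that under $MA_{\omega_1}(\metr$-c.c.$)$ any pairwise disjoint sequence of tuples in $\metr^n$ has an uncountable pairwise alike subsequence; the extra input we need is to glue this to the condition-part of a $\metr$-c.c.\ witnessing family, and c.c.c.\ will do that automatically.

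Fix a c.c.c.\ partial order $\mathbb{P}$ of size $\omega_1$ and a sequence $\{(p_\xi,x_1^\xi,\ldots,x_n^\xi)\mid \xi<\omega_1\}\subseteq \mathbb{P}\times\metr^n$ of pairwise disjoint tuples. First, I would apply Lemma \ref{metrknaster} to the tuples alone, obtaining an uncountable $S\subseteq\omega_1$ with $(x_1^\xi,\ldots,x_n^\xi)\circledast(x_1^\eta,\ldots,x_n^\eta)$ for all distinct $\xi,\eta\in S$. Second, I would inspect $\{p_\xi\mid \xi\in S\}$: either two distinct indices in $S$ yield the same condition (which is then trivially compatible with itself), or $\{p_\xi\mid\xi\in S\}$ is a genuinely uncountable subset of $\mathbb{P}$, which by c.c.c.\ cannot be an antichain, so two of its members are compatible. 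Either way we produce $\xi\neq\eta$ in $S$ with $p_\xi,p_\eta$ compatible and the corresponding tuples alike, which is exactly what $\metr$-c.c.\ requires.

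The main technical content is already inside Lemma \ref{metrknaster}, where Martin's Axiom is applied via the predensity construction of Lemma \ref{predense} to the poset of finite pairwise alike index sets; the present proposition is in essence the observation that the $\circledast$ relation refers only to the tuple side, so once an uncountable alike family is in hand, compatibility on the condition side is decoupled and supplied for free by c.c.c. A slightly less economical alternative would be to apply $MA_{\omega_1}(\metr$-c.c.$)$ directly to
\[
\mathbb{Q}=\{F\in[\omega_1]^{<\omega}\mid p_\xi,p_\eta\text{ are compatible and }\overline{x}_\xi\circledast\overline{x}_\eta\text{ for all distinct }\xi,\eta\in F\},
\]
verifying $\mathbb{Q}$ is $\metr$-c.c.\ by a concatenation and $\Delta$-system pigeonhole exactly as in the proof of Lemma \ref{metrknaster} (now using c.c.c.\ of $\mathbb{P}$ to produce the compatibility in the $\Delta$-system root), but the route sketched above is leaner. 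The only anticipated subtlety is the bookkeeping for possible repetitions among the $p_\xi$, disposed of in one line as above.
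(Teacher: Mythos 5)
Your proposal is correct and follows essentially the same route as the paper: pass to an uncountable subfamily of pairwise alike tuples via Lemma \ref{metrknaster}, then use the c.c.c.\ of $\mathbb{P}$ to find two compatible conditions among the corresponding $p_\xi$. The extra remarks (handling repeated conditions, the alternative poset $\mathbb{Q}$) are harmless but unnecessary, since the tuples in the witnessing family are assumed pairwise disjoint.
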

	
	\begin{proof}
		Suppose that $\mathbb{P}$ is a c.c.c. partial order of cardinality $\omega_1$, and fix some disjoint family $\{ (p_\xi,x_1^\xi,\ldots,x_n^\xi)| \;  \xi < \omega_1 \}\subseteq \mathbb{P} \times \metr^n$. If $\ma_{\omega_1}(\text{$\metr${-c.c.}})$ holds, we can assume that all tuples $(p_\xi,x_1^\xi,\ldots,x_n^\xi)$ are pairwise alike, and since $\mathbb{P}$ is c.c.c. we will find two $p_\xi$ and $p_\eta$, which are comparable.
	\end{proof}
	
	The immediate consequence is
	
	\begin{thm}  \label{mainthm0}
		It is consistent with $\zfc+ \ma + ``2^{\omega}=\omega_2"$ that $\metr$ is rectangular.
	\end{thm}
	

	\begin{thm}  \label{mainthm1}
		Let $(X,d)$ be a rectangular rational metric space of size $\omega_1$. $\ma_{\omega_1}$ implies that any uncountable 1-1 function $f \subseteq X \times X$ is an isometry on an uncountable set.
	\end{thm}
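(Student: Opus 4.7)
The plan is to represent the graph of $f$ by the family of $2$-tuples $\{(x, f(x)) : x \in \dom f\}$ in $X^2$ and use Lemma \ref{metrknaster} to locate uncountably many of them that are pairwise alike; axiom A1 will then immediately express that $f$ preserves distances between the corresponding points.

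First I would dispose of trivialities. If $\{x \in \dom f : f(x) = x\}$ is uncountable, $f$ is the identity there and we are done, so I may assume $f$ has only countably many fixed points and restrict to an uncountable $D \subseteq \dom f$ on which $f(x) \neq x$. Next I extract an uncountable $A \subseteq D$ on which the pairs $\{x, f(x)\}$ are pairwise disjoint. This is a routine combinatorial step: the graph on $D \cup f[D]$ whose edges are $\{x, f(x)\}$ has every vertex of degree at most $2$ (because $f$ is $1$-$1$), so its connected components are countable paths and cycles, from each of which a matching can be selected to supply, in total, uncountably many pairwise disjoint pairs.

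Now $\{(x, f(x)) : x \in A\}$ is a family of $\omega_1$ pairwise disjoint $2$-tuples in $\metr^2$. Since $\metr$-c.c.\ strengthens c.c.c., $MA_{\omega_1}$ implies $MA_{\omega_1}(\metr\text{-c.c.})$, so Lemma \ref{metrknaster} produces an uncountable $A' \subseteq A$ for which $(x, f(x)) \circledast (y, f(y))$ whenever $x \neq y$ lie in $A'$. Applying axiom A1 with $i = 1$ and $j = 2$ to each such pair of alike tuples yields $d(x, y) = d(f(x), f(y))$, so $f \restriction A'$ is an isometry on the uncountable set $A'$. The main obstacle is already packaged away: the genuine difficulty sits in Lemma \ref{metrknaster} and in the rectangularity of $\metr$, and once these are in hand the present theorem reduces to unwinding the definition of \emph{alike} in the case $n = 2$.
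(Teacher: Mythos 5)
Your argument is correct, but it takes a genuinely different route from the paper's. The paper proves the theorem by a direct forcing argument: it introduces the poset $\mathbb{P}_f$ of finite subsets of $\dom{f}$ on which $f$ is an isometry, verifies the c.c.c. by applying rectangularity to the tuples $(e^\xi_{l+1},\ldots,e^\xi_m,f(e^\xi_{l+1}),\ldots,f(e^\xi_m))$ coming from a $\Delta$-system of conditions, and then extracts an uncountable filter via Lemma \ref{predense}. You instead apply Lemma \ref{metrknaster} to the family of $2$-tuples $(x,f(x))$ and read the isometry off axiom A1 with $i=1$, $j=2$; the forcing and the ``shift-invariance'' computation are thereby outsourced to that lemma rather than redone for $\mathbb{P}_f$. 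This is a legitimate and arguably cleaner modularization; your preprocessing (discarding fixed points, then disjointifying the pairs $\{x,f(x)\}$ using that the functional graph has degree at most $2$) is sound, and the paper performs the analogous disjointification inside its own c.c.c. verification, so nothing is lost. One presentational caveat: Lemma \ref{metrknaster} is stated for the generic space $\metr$, whereas the theorem concerns an arbitrary rectangular rational metric space $(X,d)$ of size $\omega_1$; since the proof of that lemma uses nothing about $\metr$ beyond rectangularity (the paper records the general form as Proposition \ref{generalknaster}), your appeal is justified, but you should say explicitly that you are invoking the version for $(X,d)$. Your observation that $MA_{\omega_1}$ yields $MA_{\omega_1}(\text{$\metr$-c.c.})$ because every $\metr$-c.c. poset is c.c.c. is correct and is needed for the reduction.
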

	
	\begin{proof}
		
		Let $f \subseteq X \times X$ be an uncountable 1-1 function. Consider the partial order
		$$\mathbb{P}_f=(\{E \in [\dom{f}]^{<\omega}|\; f\restriction E \text{ is an isometry} \},\subseteq).$$
		
		We will check that $\mathbb{P}_f$ is c.c.c. Take a sequence $\{e_\xi|\; \xi <\omega_1 \} \subseteq \mathbb{P}_f$. Applying $\Delta$-system Lemma we may assume that
		\begin{itemize}
			\item $\forall \; \xi <\omega_1 \; |e_\xi|=m$,
			\item $\forall \; \xi \neq \eta <\omega_1 \;  e_\xi\cap e_\eta = r$,
			\item $e_\xi=(e_1^\xi,\ldots,e_l^\xi,e_{l+1}^\xi,\ldots,e_m^\xi)$, where $\{e_1^\xi,\ldots,e_l^\xi\}=r$. 
		\end{itemize}

		Look at the family of tuples 
		$$\{ (e_{l+1}^\xi,\ldots,e_m^\xi,f(e_{l+1}^\xi),\ldots,f(e_m^\xi))|\; \xi <\omega_1 \}.$$
		Using for example $\Delta$-system Lemma one can easily trim this sequence so that all tuples are pairwise disjoint. Now, given that $(X,d)$ is rectangular, we find $\xi \neq \eta < \omega_1$, such that
		
		$$(e_{l+1}^\xi,\ldots,e_m^\xi,f(e_{l+1}^\xi),\ldots,f(e_m^\xi))\circledast
		(e_{l+1}^\eta,\ldots,e_m^\eta,f(e_{l+1}^\eta),\ldots,f(e_m^\eta)).$$ 
		
		We must check that $e_\xi$ and $e_\eta$ are comparable, that is $f \restriction (e_\xi \cup e_\eta)$ is an isometry. But notice that for $i\neq j=l+1,\ldots,m$,
		
		$$d(e_i^\xi,e_j^\eta)=d(e_i^\xi,e_j^\xi)=d(f(e_i^\xi),f(e_j^\xi))
		=d(f(e_i^\xi),f(e_j^\eta)).$$
		This proves that $e_\xi \cup e_\eta \in \mathbb{P}_f$. 
		\par Notice that all singletons belong to $\mathbb{P}_f$, so $|\mathbb{P}| = \omega_1$. Applying Martin's Axiom to the family of predense sets given by the Lemma \ref{predense}, we find an uncountable filter $G\subseteq \mathbb{P}_f$. The set $\bigcup\{E|\; E \in G \}$ is an uncountable set on which $f$ is an isometry.
	\end{proof}
	
	\begin{cor} \label{corhom}
		It is consistent with $\zfc+\ma + ``2^\omega=\omega_2"$ that there exists an uncountable, separable (even hereditarily separably saturated) rational metric space $(X,d)$ such that each uncountable 1-1 function $f\subseteq X\times X$ is an isometry on an uncountable set.
	\end{cor}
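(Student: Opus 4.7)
The plan is to combine Theorems \ref{mainthm0} and \ref{mainthm1}, together with a check that the generic space is separable. I would work inside the model given by Theorem \ref{mainthm0}, i.e.\ a model of $ZFC + MA + ``2^\omega=\omega_2"$ in which the space $\metr$, added by $\operatorname{Fn}(\omega_1,Metr,\omega)$, is rectangular and of size $\omega_1$. Since $\metr$ is a rectangular rational metric space of cardinality $\omega_1$ and $MA_{\omega_1}$ holds in this model, Theorem \ref{mainthm1} applies: every uncountable $1$-$1$ function $f \subseteq \metr \times \metr$ is an isometry on an uncountable set. Here ``function'' should be read, as in the introduction and abstract, as partial $1$-$1$ function; for a more general uncountable function with uncountable image, one first extracts an uncountable $1$-$1$ subfunction by choosing distinct preimages of distinct image points.

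For separability, I would show that $\omega \subseteq \omega_1$ is dense in $\metr$ by a standard density argument in $\operatorname{Fn}(\omega_1,Metr,\omega)$: given a condition $p$, a point $x \in \dom{p}$, and a positive rational $\epsilon$, extend $p$ by some fresh $n \in \omega \setminus \dom{p}$, declare $d(x,n) := \epsilon'$ for a positive rational $\epsilon' < \epsilon$, and set $d(n,y) := d(x,y)$ for every other $y \in \dom{p}$. The triangle inequalities at $n$ then reduce immediately to those already holding in $p$. Because the iteration producing $MA_{\omega_1}(\text{$\metr$-c.c.})$ is c.c.c.\ and introduces no new elements of $\omega_1$, the set $\omega$ remains dense in $\metr$ in the final model.

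The parenthetical ``hereditarily separably saturated'' strengthening asks that sufficiently many rational distance configurations over countable subspaces of $\metr$ be realized inside $\metr$. My plan here is to associate to each such desired realization a c.c.c.\ poset in the spirit of Lemma \ref{metrknaster}, verify that it is $\metr$-c.c., and then invoke $MA_{\omega_1}$ to obtain a filter yielding the realization. The main obstacle I expect lies in this last component: the isometry statement is an immediate combination of Theorems \ref{mainthm0} and \ref{mainthm1}, and separability is a routine density argument in the base forcing, but uniformly showing that the posets realizing types over countable subsets of $\metr$ are $\metr$-c.c.\ rather than merely c.c.c.\ will require some care to ensure that rectangularity is not broken along the way.
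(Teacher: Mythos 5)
Your main line matches the paper's: Corollary \ref{corhom} is obtained by running Theorem \ref{mainthm1} inside the model of Theorem \ref{mainthm0}, reading ``uncountable function'' as ``uncountable $1$-$1$ function'' (as in the abstract and introduction), and getting separability from a density argument in the base forcing $\operatorname{Fn}(\omega_1,Metr,\omega)$ together with absoluteness through the c.c.c.\ iteration. Those parts are fine.

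The gap is in your plan for the hereditarily separably saturated strengthening, which you flag as the main obstacle but attack with the wrong tool. Applying $MA_{\omega_1}$ to a c.c.c.\ poset in the final model cannot ``yield the realization'' of a distance configuration over a finite $E\subseteq X$: a filter on a poset of finite conditions assembles a subset of already-existing objects, whereas realizing a type means exhibiting a single point $d\in X$ with prescribed distances to $E$. Such a point either already exists in $X$ or it does not, and no application of $MA$ adds points to $X$ or changes the metric; correspondingly, your worry about these posets being $\metr$-c.c.\ rather than merely c.c.c.\ is moot, because you never actually force with them. The paper obtains HSS exactly where you obtained separability: by a density argument in $\operatorname{Fn}(\omega_1,Metr,\omega)$ (Proposition \ref{HSS}) showing that \emph{every} infinite ground-model subset $D\subseteq\omega_1$ is a saturating subset --- given a condition $p$ with $E\subseteq\dom{p}$ and a prescribed rational one-point extension of $E$, amalgamate a fresh point of $D\setminus\dom{p}$ over $E$ realizing it, using the standard (strong) amalgamation of rational metric spaces. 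Being a saturating subset is absolute, since it quantifies only over finite subsets of $X$ and their rational one-point extensions, and by the c.c.c.\ any countable $A\subseteq X$ in the final model is covered by a ground-model countable set, so a ground-model infinite $D$ disjoint from $A$ witnesses that $X\setminus A$ is separably saturated. Your separability argument is precisely the special case where $E$ is a single point; generalize it rather than reaching for $MA$.
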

	
	Under $\ch$, the above assertion fails.
	
	\begin{thm} Assume $\ch$, and let $(X,d)$ be any uncountable, separable metric space. There is a 1-1 function $h:X\rightarrow X$ that is not isometry on any uncountable subset of $(X,d)$.
	\end{thm}
	\begin{proof}
		Since $2^\omega=\omega_1$, we know that $X$ has size $\omega_1$, and so we can assume that $X=\omega_1$. Let us fix an enumeration $\{f_\alpha|\; \alpha<\omega_1\}$ of all isometries between countable subsets of $(\omega_1,d)$. We denote by $\overline{f}_\beta$ the unique continuous extension of $f_\beta$ to the closure of $\dom(f_\beta)$. We define $h:\omega_1 \rightarrow \omega_1$ by induction, ensuring at step $\alpha$ that
		\begin{itemize}
			\item $h(\alpha)\neq h(\beta)$, for every $\beta<\alpha$,
			\item $h(\alpha) \neq \overline{f}_\beta(\alpha)$, for every $\beta<\alpha$, for which $\alpha \in \overline{\dom(f_\beta)}$.
		\end{itemize}
		Towards a contradiction, suppose now that $A\subseteq \omega_1$ is uncoutnable, and $h\restriction A$ is an isometry. There exists $\beta<\omega$, for which $h\restriction A $ extends $f_\beta$, and $\dom(f_\beta)$ is dense in $A$. It follows that
		$$f_\beta \subseteq h\restriction A \subseteq \overline{f}_\beta.$$
		Now, pick any $\alpha \in A \setminus (\beta+1)$. We have
		$$h(\alpha)\neq \overline{f}_\beta(\alpha)=h(\alpha).$$
		This shows that $h\restriction A$ cannot be an isometry.
	\end{proof}

	\subsection{Rectangular Models for General Binary Relations}
	
	The reader might have noticed that in the previous section the triangle inequality for the space $\metr$ was never applied. The crucial property we used was a variant of the SP, which ensures that "remainders" will be alike (see Lemma \ref{metralike}). In this section we will define a version of the SP that allows to proceed with the proof of Theorem \ref{mainthm1} in case of other classes of structures. One assumption which seems hard to be removed is that the language consists only of binary relational symbols.
	
	\par Each language consisting of finitely many binary relational symbols can be identified with a finite coloring of ordered pairs -- given a model $A$, we assign to each element of $A^2$ its isomorphism type. There are only finitely many symbols in the language, so this coloring is indeed finite, and moreover, it determines the model $A$ completely. Also, any function is a homomorphism precisely when it preserves this coloring. This observation allows to generalize results from Section 2 to other classes, besides metric spaces. In fact, the finiteness of language is not relevant as long as there are only countably many isomorphism types of 2-element models. Let $\mathcal{K}$ be some class of structures in a countable language $\{\mathcal{R}_i\}_{i<\omega}$ consisting of binary relational symbols. Assume also that $\mathcal{K}$ has only countably many isomorphism types of finite models. Let $c$ be the corresponding coloring of pairs in models from $\mathcal{K}$ -- by the remark above we may forget about the relational symbols, and think of models from $\mathcal{K}$ as having only one "relation", namely $c$. We introduce the $\circledast$ relation by axioms similar to the metric case, but unlike metrics, the coloring $c$ might not be symmetric. This is the only significant difference.

	\begin{defin}
		Let $X \in \mathcal{K}$, and $(x_1,\ldots,x_n),(y_1,\ldots,y_n) \in X^n$ be disjoint. We will say that they are \emph{alike}, and write $(x_1,\ldots,x_n) \circledast (y_1,\ldots,y_n)$, if the following axioms are satisfied
		\begin{enumerate}
			\item[A1a]	$\forall \; i,j=1,\ldots,n \; c(x_i,y_i)=c(x_j,y_j)$
			\item[A1b]	$\forall \; i,j=1,\ldots,n \; c(y_i,x_i)=c(y_j,x_j)$
			\item[A2a]	$\forall \; i,j=1,\ldots,n \; c(x_i,x_j)=c(y_i,y_j)$
			\item[A3a]	$\forall \; i,j=1,\ldots,n \; (x_i\neq x_j \implies c(x_i,x_j)=c(x_i,y_j)=c(y_i,x_j))$
		\end{enumerate}
	\end{defin}
	
	If all relations $\mathcal{R}_k$ are anti-reflexive ($\forall x\; \lnot \mathcal{R}_k(x,x)$), then we can omit the clause $(x_i \neq x_j) \implies$ in A3a. It is standard to check that 
	$$(x_1,\ldots,x_n) \circledast (y_1,\ldots,y_n) \iff (y_1,\ldots,y_n) \circledast (x_1,\ldots,x_n).$$
	
	\begin{defin}
		$X \in \mathcal{K}$ is \emph{rectangular} if $|X|>\omega$, and for any family of pairwise disjoint tuples $\{(x_1^\xi,\ldots,x_n^\xi)|\; \xi<\omega_1\} \subseteq X^n$, there exist $\xi\neq \eta<\omega_1$, such that $(x_1^\xi,\ldots,x_n^\xi) \circledast (x_1^\eta,\ldots,x_n^\eta).$
	\end{defin}
	
	\begin{defin}
		$\mathcal{K}$ has the \emph{Rectangular Splitting Property} (RSP) if for all $R \in \mathcal{K}$, for all pairs of isomorphic extensions $R\subseteq X$,  $R \subseteq Y$, with the corresponding isomorphism $h:X \rightarrow Y$, there exists $Z\in \mathcal{K}$, with the universe $X \cup Y$, such that for any sequence $(x_1,\ldots,x_n)$ enumerating bijectively $X \setminus R$, $Z$ satisfies $(x_1,\ldots,x_n)\circledast (h(x_1),\ldots,h(x_n))$.
	\end{defin}
	
	The RSP ensures the conclusion of Lemma \ref{metralike}. Moreover, since the definition of $\circledast$ relation is independent of the ordering of the tuples, if there exists an enumeration $(x_1,\ldots,x_n)$ like above, it can be replaced by any other enumeration, even not 1-1. 
	
	\begin{thm} \label{mainthm3}
		If $\mathcal{K}$ has the RSP then $\operatorname{Fn}(\kappa,\mathcal{K},\omega)$ forces the generic structure to be rectangular, for any uncountable cardinal $\kappa$.
	\end{thm}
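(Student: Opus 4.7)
The plan is to mimic the proof of Proposition \ref{propx1} verbatim, substituting the RSP in place of the metric-specific Lemma \ref{metralike}. Fix a condition $p$ and a sequence $\{(\dot{x}_1^\xi,\ldots,\dot{x}_n^\xi) : \xi < \omega_1\}$ of $\operatorname{Fn}(\kappa,\mathcal{K},\omega)$-names for pairwise disjoint $n$-tuples from the generic structure. For each $\xi$, choose $p_\xi \le p$ deciding the tuple to some $(x_1^\xi,\ldots,x_n^\xi) \subseteq F(p_\xi)$ and deciding the induced coloring on $\{x_1^\xi,\ldots,x_n^\xi\}$.

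First, the $\Delta$-system lemma yields an uncountable $S \subseteq \omega_1$ such that $\{F(p_\xi) : \xi \in S\}$ forms a $\Delta$-system with root $K$; by discarding at most countably many indices we may also assume that no distinguished tuple $(x_1^\xi,\ldots,x_n^\xi)$ meets $K$. Using the standing assumption that $\mathcal{K}$ has only countably many isomorphism types of finite models, we refine $S$ so that all $p_\xi \restriction K$ equal a fixed structure $\mathbb{K} \in \mathcal{K}$, and moreover so that for any $\xi, \eta \in S$ there is an isomorphism $h : p_\xi \to p_\eta$ which is the identity on $\mathbb{K}$ and sends $x_i^\xi$ to $x_i^\eta$ for each $i = 1,\ldots,n$. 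This last refinement is possible because, for each $\xi$, the isomorphism type over $\mathbb{K}$ of the enriched structure $(p_\xi, x_1^\xi,\ldots,x_n^\xi)$ takes only countably many values.

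Pick any $\xi \neq \eta \in S$ and apply the RSP to the pair of extensions $\mathbb{K} \subseteq p_\xi$ and $\mathbb{K} \subseteq p_\eta$ with the isomorphism $h$ above. This yields $Z \in \mathcal{K}$ on $F(p_\xi) \cup F(p_\eta)$ extending both structures, hence a condition $Z \le p_\xi, p_\eta$ in the forcing. The remark immediately following the definition of RSP says that the $\circledast$-conclusion holds for any enumeration of $F(p_\xi) \setminus \mathbb{K}$, in particular for one listing $x_1^\xi,\ldots,x_n^\xi$ as its first $n$ entries. From this we read off $(x_1^\xi,\ldots,x_n^\xi) \circledast (x_1^\eta,\ldots,x_n^\eta)$ inside $Z$, so $Z$ forces the desired relation on the chosen tuples.

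The only point at which this proof goes beyond a mechanical translation of Proposition \ref{propx1} is the requirement that the isomorphism $h$ match the distinguished tuples component-wise; this is what makes the subsequent appeal to the RSP actually deliver the $\circledast$-relation we want, rather than just some $\circledast$-relation for an unrelated enumeration. Once that fine-tuning of $h$ is carried out via the countably-many-types assumption, the enumeration-freedom built into the RSP lets one transfer the amalgamation conclusion directly to the tuples of interest with no further bookkeeping.
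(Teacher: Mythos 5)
Your proof is correct and is exactly the argument the paper intends: the paper's own proof of Theorem \ref{mainthm3} consists of the single line ``Exactly like the proof of Proposition \ref{propx1},'' and your write-up is a faithful elaboration of that adaptation, with the RSP playing the role of Lemma \ref{metralike}. The two refinements you highlight --- arranging that $h$ matches the distinguished tuples component-wise, and invoking the enumeration-freedom of the RSP to restrict the $\circledast$-conclusion to the sub-tuple of interest --- are precisely the details the paper leaves implicit, and you handle them correctly.
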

	
	\begin{proof}
		Exactly like the proof of Proposition \ref{propx1}.
	\end{proof}

	Let us fix a rectangular model $\rect$. The $\rect${-c.c.} is defined in the same way as in Definition \ref{defappropriate}:
	
	\begin{defin}
		A partial order $\mathbb{P}$ satisfies \emph{$\rect$-c.c.} if given any natural number $n>0$, for each disjoint family 
		$\{(p_\xi,x_1^\xi,\ldots,x_n^\xi)|\; \xi < \omega_1 \} \subseteq \mathbb{P} \times \rect^n$, there exist $\xi\neq \eta <\omega_1$, such that $p_\xi$ and $p_\eta$ are comparable, and
		$(x_1^\xi,\ldots,x_n^\xi) \circledast (x_1^\eta,\ldots,x_n^\eta)$.
	\end{defin}
	
	\begin{prop}
		If $\mathbb{P}$ satisfies $\rect$-c.c. then $\mathbb{P} \Vdash \text{ "$\rect$ is rectangular".}$
	\end{prop}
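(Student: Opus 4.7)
The plan is to mimic the proof of Proposition \ref{propx4} essentially verbatim, since the definitions of $\rect$-c.c.\ and of rectangularity for $\rect$ are structurally identical to the metric case; the fact that $c$ need not be symmetric only affected the axioms defining $\circledast$, and $\circledast$ appears here as a black box.

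Concretely, fix an arbitrary condition $p \in \mathbb{P}$ and a $\mathbb{P}$-name $\dot{\sigma}$ for a family $\{(\dot{x}_1^\xi,\ldots,\dot{x}_n^\xi) \mid \xi < \omega_1\} \subseteq \rect^n$ of pairwise disjoint $n$-tuples. For each $\xi < \omega_1$ I would choose a condition $p_\xi \le p$ that decides the values of $\dot{x}_1^\xi,\ldots,\dot{x}_n^\xi$, say as $x_1^\xi,\ldots,x_n^\xi \in \mathcal{X}$. Because $\dot{\sigma}$ is forced to consist of pairwise disjoint tuples, the decided tuples $(x_1^\xi,\ldots,x_n^\xi)$ are automatically pairwise disjoint in the ground model (if not, incompatible conditions would force contradictory disjointness statements).

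Now I apply the $\rect$-c.c.\ of $\mathbb{P}$ to the family $\{(p_\xi, x_1^\xi, \ldots, x_n^\xi) \mid \xi < \omega_1\} \subseteq \mathbb{P} \times \mathcal{X}^n$. This yields $\xi \neq \eta < \omega_1$ such that $p_\xi$ and $p_\eta$ are compatible, say via a common extension $q \le p_\xi, p_\eta$, and such that $(x_1^\xi,\ldots,x_n^\xi) \circledast (x_1^\eta,\ldots,x_n^\eta)$ holds. Since $q$ forces $\dot{x}_i^\xi = x_i^\xi$ and $\dot{x}_i^\eta = x_i^\eta$, it also forces $(\dot{x}_1^\xi,\ldots,\dot{x}_n^\xi) \circledast (\dot{x}_1^\eta,\ldots,\dot{x}_n^\eta)$. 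As $p$ was arbitrary, the set of such $q$ is dense below every condition, so $\mathbb{P}$ forces that $\rect$ is rectangular.

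I do not expect any real obstacle: the only thing worth double-checking is that the ground-model decided tuples are genuinely pairwise disjoint, which follows from the pairwise disjointness forced on the names. The proof is a straightforward transcription of Proposition \ref{propx4} with $\metr$ replaced by $\rect$.
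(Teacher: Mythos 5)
Your proof is correct and takes essentially the same approach as the paper, whose own proof just says ``exactly like the proof of Proposition \ref{propx4}'': decide the names below an arbitrary condition, apply the $\rect$-c.c.\ to the decided family, and read off the forced $\circledast$ relation from a common extension of $p_\xi$ and $p_\eta$. Your extra remark about the pairwise disjointness of the decided ground-model tuples (which the paper leaves implicit) is in the right spirit; the clean justification is that overlapping decided tuples force $p_\xi\perp p_\eta$, and since the $\rect$-c.c.\ implies the c.c.c.\ one can thin to an uncountable subfamily of genuinely disjoint tuples.
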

	
	\begin{proof}
		Exactly like the proof of Proposition \ref{propx4}.
	\end{proof}
	
	\begin{prop} \label{generalknaster}
		$\ma_{\omega_1}(\text{$\rect$-c.c.})$ implies that any family of pairwise disjoint tuples\\ $\{(x_1^\xi,\ldots,x_n^\xi)|\; \xi<\omega_1 \}\subseteq \rect^n$ contains an uncountable subfamily of pairwise alike tuples.
	\end{prop}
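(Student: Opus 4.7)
The plan is to follow the blueprint of Lemma \ref{metrknaster} essentially verbatim, replacing the symmetric axioms A1--A3 by the four axioms A1a, A1b, A2a, A3a of the general setting. Specifically, I would define the partial order
$$\mathbb{P} = \bigl\{ F \in [\omega_1]^{<\omega} \,\bigm|\, \forall \xi \neq \eta \in F,\; \overline{x}_\xi \circledast \overline{x}_\eta \bigr\},$$
where $\overline{x}_\xi = (x_1^\xi,\ldots,x_n^\xi)$, ordered by reverse inclusion. All singletons are in $\mathbb{P}$, so $|\mathbb{P}| = \omega_1$. The goal is to show $\mathbb{P}$ is $\rect$-c.c., then apply $MA_{\omega_1}(\rect$-c.c.$)$ with the predense sets supplied by Lemma \ref{predense} to extract an uncountable filter $G$, so that $\{\overline{x}_\alpha \mid \alpha \in \bigcup G\}$ is the desired pairwise-alike subfamily.

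To prove $\mathbb{P}$ is $\rect$-c.c., fix a disjoint family $\{(F_\alpha, v_1^\alpha,\ldots,v_s^\alpha) \mid \alpha < \omega_1\} \subseteq \mathbb{P} \times \rect^s$. By a $\Delta$-system reduction, I would assume $F_\alpha = \{e_1,\ldots,e_k,e_{k+1}^\alpha,\ldots,e_{k+m}^\alpha\}$ with the tails pairwise disjoint. Form the concatenated tuple
$$\overline{y}_\alpha = \bigl(\overline{x}_{e_{k+1}^\alpha},\ldots,\overline{x}_{e_{k+m}^\alpha},v_1^\alpha,\ldots,v_s^\alpha\bigr) \in \rect^{nm+s}.$$
Since $\rect$ is rectangular, there exist $\alpha \neq \beta$ with $\overline{y}_\alpha \circledast \overline{y}_\beta$. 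Then $(v_1^\alpha,\ldots,v_s^\alpha) \circledast (v_1^\beta,\ldots,v_s^\beta)$ and $\overline{x}_{e_{k+i}^\alpha} \circledast \overline{x}_{e_{k+i}^\beta}$ for each $i$ come for free from restricting the big $\circledast$ to sub-blocks.

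The real work, and the main obstacle, is to verify the \emph{shift-invariance}: for $1 \le p \neq r \le m$ one needs $\overline{x}_{e_{k+p}^\alpha} \circledast \overline{x}_{e_{k+r}^\beta}$, so that $F_\alpha \cup F_\beta \in \mathbb{P}$. Writing the blocks as $(x_1^p,\ldots,x_n^p)$ inside $\overline{y}_\alpha$ and $(y_1^r,\ldots,y_n^r)$ inside $\overline{y}_\beta$, each of A1a, A1b, A2a, A3a for the cross-pair follows from a two- or three-step chain alternating axioms on $\overline{y}_\alpha,\overline{y}_\beta$ with axioms on the originally alike tuples $\overline{x}_{e_{k+p}^\alpha} \circledast \overline{x}_{e_{k+p}^\beta}$ and $\overline{x}_{e_{k+r}^\alpha} \circledast \overline{x}_{e_{k+r}^\beta}$. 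Compared to the metric case, the new wrinkle is simply that A1 splits into A1a/A1b and that A3a asserts two equalities $c(x_i,x_j)=c(x_i,y_j)=c(y_i,x_j)$, so each of the three chains from the proof of Lemma \ref{metrknaster} needs to be written twice, once for each direction of the asymmetric coloring $c$. No new idea is required; the verification is routine, if tedious, once one notices that the $\circledast$-axioms are exactly the ones that make these chains close up.

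With $\rect$-c.c. established, an application of $MA_{\omega_1}(\rect$-c.c.$)$ to the family of $\omega_1$ predense sets from Lemma \ref{predense} yields an uncountable filter $G \subseteq \mathbb{P}$ containing the distinguished condition $p$, and $\{\overline{x}_\alpha \mid \alpha \in \bigcup G\}$ is the required uncountable pairwise-alike subfamily.
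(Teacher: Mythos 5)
Your proposal is correct and follows exactly the route the paper takes: the paper's proof of this proposition is literally ``Same as Lemma \ref{metrknaster},'' i.e.\ the same partial order of finite pairwise-alike index sets, the same concatenation-plus-rectangularity argument for the chain condition including the shift-invariance chains, and the same application of Lemma \ref{predense}. Your observation that the only adaptation needed is splitting A1 into A1a/A1b and running each chain once per direction of the asymmetric coloring is precisely the intended (and routine) modification.
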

	\begin{proof}
		Same as Lemma \ref{metrknaster}.
	\end{proof}
	
	What follows, is a generalization of Theorem \ref{mainthm0}.
	
	\begin{thm} \label{mainthm2}
		It is consistent with $\zfc+\ma + ``2^{\omega}=\omega_2"$ that $\rect$ is rectangular.
	\end{thm}

	The proof that the $\rect$-c.c. is preserved under finite support iterations is also the same. 
	
	\begin{prop} \label{propx6}
		Let $(X,c) \in \mathcal{K}$ be a rectangular model of size $\omega_1$. $\ma_{\omega_1}$ implies that any uncountable 1-1 function $f \subseteq X \times X$ is a homomorphism on an uncountable set.
	\end{prop}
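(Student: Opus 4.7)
The plan is to mirror the proof of Theorem~\ref{mainthm1}, substituting the metric $d$ by the coloring $c$ and using the asymmetric axioms A1a, A1b, A2a, A3a in place of A1--A3. Given an uncountable $1$-$1$ function $f \subseteq X \times X$, I would work with
$$\mathbb{P}_f = (\{E \in [\dom{f}]^{<\omega}|\; f\restriction E \text{ is a homomorphism}\}, \subseteq),$$
ordered by reverse inclusion. Since all singletons lie in $\mathbb{P}_f$, we have $|\mathbb{P}_f| = \omega_1$, so as soon as c.c.c.\ is proved, Lemma~\ref{predense} together with $MA_{\omega_1}$ produces an uncountable filter whose union is an uncountable set on which $f$ is a homomorphism.

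The main step is proving c.c.c. Given $\{e_\xi|\; \xi<\omega_1\}\subseteq \mathbb{P}_f$, I would apply the $\Delta$-system Lemma to assume $e_\xi = \{e_1^\xi,\ldots,e_l^\xi,e_{l+1}^\xi,\ldots,e_m^\xi\}$ with common root $r = \{e_1^\xi,\ldots,e_l^\xi\}$, and form the concatenated tuples
$$\overline{u}_\xi := (e_{l+1}^\xi,\ldots,e_m^\xi,f(e_{l+1}^\xi),\ldots,f(e_m^\xi)).$$
A further $\Delta$-system thinning makes these pairwise disjoint---this is possible because each point of $X$ lies in boundedly many $\overline{u}_\xi$, using that $f$ is $1$-$1$ and that the sets $\{e_{l+1}^\xi,\ldots,e_m^\xi\}$ are already pairwise disjoint. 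Rectangularity of $(X,c)$ then provides $\xi \neq \eta$ with $\overline{u}_\xi \circledast \overline{u}_\eta$, and it remains to check that $e_\xi \cup e_\eta \in \mathbb{P}_f$, i.e.\ $c(a,b) = c(f(a), f(b))$ for every $a, b \in e_\xi \cup e_\eta$.

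All cases except $a = e_i^\xi$, $b = e_j^\eta$ with $i, j \in \{l+1, \ldots, m\}$ follow immediately from $e_\xi, e_\eta \in \mathbb{P}_f$. For the diagonal case $i = j$, axioms A1a and A1b applied between matching positions $i - l$ and $(i - l) + (m - l)$ across the two halves of $\overline{u}_\xi \circledast \overline{u}_\eta$ directly give $c(e_i^\xi, e_i^\eta) = c(f(e_i^\xi), f(e_i^\eta))$ and the reversed equality. For the off-diagonal case $i \neq j$, I would invoke A3a twice: first on the ``unprimed'' block (valid since $e_i^\xi \neq e_j^\xi$) to obtain $c(e_i^\xi, e_j^\eta) = c(e_i^\xi, e_j^\xi)$; then use $e_\xi \in \mathbb{P}_f$ to rewrite this as $c(f(e_i^\xi), f(e_j^\xi))$; then apply A3a to the ``primed'' block (valid since $f$ is $1$-$1$, so $f(e_i^\xi) \neq f(e_j^\xi)$) to reach $c(f(e_i^\xi), f(e_j^\eta))$. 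The reverse equality $c(e_j^\eta, e_i^\xi) = c(f(e_j^\eta), f(e_i^\xi))$ is obtained analogously, using the other halves of the A3a clauses, namely $c(x_p, x_q) = c(y_p, x_q)$.

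The only genuine subtlety is the asymmetry of $c$: every unordered pair $\{a,b\}$ demands independent verification for $c(a,b)$ and $c(b,a)$, which is exactly what the doubled axiomatization A1a/A1b and the two-sided A3a were designed to supply. Otherwise the argument is essentially a line-by-line rewrite of Theorem~\ref{mainthm1}.
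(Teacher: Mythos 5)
Your proof is correct and takes essentially the same route as the paper, which simply declares the argument to be ``almost exactly like the proof of Theorem~\ref{mainthm1}'', with $c$ in place of $d$ and A1a/A1b in place of A1 at the end. Your write-up merely makes explicit the details the paper leaves implicit, in particular the two-sided verification via A1b and both clauses of A3a that the asymmetry of $c$ requires.
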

	
	\begin{proof}
		Almost exactly like the proof of Theorem \ref{mainthm1} -- the only difference is that we write $c$ instead of $d$, \emph{isomorphism} instead of \emph{isometry}, and in the end use A1a and A1b instead of A1, since $c$ might not be symmetric.
	\end{proof}
	
	\begin{prop}
		The classes of graphs, directed graphs, tournaments, linear orders, and partial orders have the RSP.
	\end{prop}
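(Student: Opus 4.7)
The plan is to exploit the observation that, in the setup of RSP, axiom A3a already dictates the cross-colors $c(x_i,h(x_j))$ and $c(h(x_i),x_j)$ for $i\neq j$ to equal $c(x_i,x_j)$, while A1a and A1b leave only one free choice per class: the common diagonal value of $c(x_i,h(x_i))$ and of $c(h(x_i),x_i)$. Thus for each class I need only specify these diagonal colors and then verify that the resulting structure on $X\cup Y$ actually belongs to $\mathcal{K}$.

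For undirected graphs, I would declare each $\{x_i,h(x_i)\}$ a non-edge; symmetry of the induced coloring at off-diagonal pairs is automatic, because A3a applied with indices swapped yields $c(h(x_j),x_i)=c(x_j,x_i)=c(x_i,x_j)=c(x_i,h(x_j))$. For directed graphs I do the same in each direction independently. For tournaments, where exactly one arc between distinct vertices is required, the A3a-values at $(x_i,h(x_j))$ and $(h(x_j),x_i)$ are already of opposite type (they inherit the unique arc from $(x_i,x_j)$ via the isomorphism $h$), and I would fix the diagonal uniformly by setting $x_i\to h(x_i)$ for every $i$.

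For linear and partial orders I would build $Z$ as a ``doubling'' of $X$: every $x\in X\setminus R$ is replaced by the $2$-chain $x<h(x)$, points of $R$ are left untouched, and all order-relations between distinct twin-classes are inherited from the $X$-order. This extends both $X$ and $Y$ because $h$ fixes $R$ and is an order-isomorphism, and the cross-relations between $X\setminus R$ and $Y\setminus R$ work out to $c(x,h(x'))=c(x,x')$ and $c(h(x),x')=c(x,x')$ for $x\neq x'$, which is exactly what A3a demands.

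The main obstacle I expect is verifying transitivity (and antisymmetry, in the partial order case) after the doubling; I would handle this by a case analysis on whether each of the three elements in a transitivity instance lies in $R$, in $X\setminus R$, or in $Y\setminus R$, and whether any pair of them forms a twin pair $\{x,h(x)\}$. In each sub-case one can use $h$ to transport everything into $X$ and reduce to transitivity there. The tournament case imposes no transitivity constraint, and the verifications for graphs and directed graphs are immediate once the diagonal colors have been chosen.
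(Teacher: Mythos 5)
Your proposal is correct, and for the classes the paper actually writes out it is close to, but not identical with, the paper's argument. The paper proves the RSP only for linear and partial orders (declaring the graph, digraph and tournament cases easy and leaving them to the reader); for linear orders it uses exactly the order you describe ($x_i<h(x_i)$ on the diagonal, all off-diagonal cross-relations copied from $X$ as A3a forces), but verifies transitivity by an explicit six-case analysis. Your repackaging of the same order as a lexicographic doubling --- each point of $X\setminus R$ becomes a $2$-chain, blocks ordered as their base points are in $X$ --- reduces transitivity and antisymmetry to the standard fact that a lexicographic sum of (partial) orders over a (partial) order is again one, which is cleaner and covers linear and partial orders by a single verification. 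The genuine divergence is the diagonal choice for partial orders: the paper makes $x_i$ and $h(x_i)$ incomparable, while you put $x_i<h(x_i)$. Both are admissible, since A1a/A1b only require the two-element type of $\{x_i,h(x_i)\}$ to be independent of $i$; note, though, that with the incomparable diagonal one must still set $h(x_i)<x_j$ exactly when $x_i<x_j$ to satisfy A3a, whereas the paper's third bullet as printed forbids all relations $h(x_i)<x_j$, so your uniform chain construction is in fact the safer route. Your organizing observation --- A3a freezes every off-diagonal cross-color to $c(x_i,x_j)$, so only the diagonal is free and only membership in $\mathcal{K}$ needs checking --- is exactly why the graph, digraph and tournament cases are immediate, and the symmetry check for graphs and the opposite-arc check for tournaments are the right points to single out there.
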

	
	\begin{proof}
		We will prove the RSP for linear and partial orders. Arguments for other classes are easy and left to the reader. For each partial order $\le$ there exists a corresponding quasi-ordering relation $<$, which is anti-reflexive. 
		
		\par Suppose we have a diagram of linear quasi-orders $R,X,Y$, and $h:X\rightarrow Y$, like in the definition of the RSP. We define a quasi-ordering on $Z=X\cup Y$, extending both $<_X$ and $<_Y$, by conditions:
		\begin{itemize}
			\item $\forall{i<\omega}\; (x_i < h(x_i))$,
			\item $\forall{i\neq j<\omega}\; (x_i<h(x_j) \iff x_i <_X x_j).$
		\end{itemize}
		Clearly $<$ is an anti-reflexive relation on $Z$. For checking transitivity we must go through several (somewhat boring) cases.
		
		\begin{enumerate}
			\item $x_i<h(x_j),\; h(x_j)<x_k$. Either $x_i=x_j$ or $x_i<x_j$. In the first case $h(x_i)<x_k$, and so $x_i<x_k$. In the second, $x_i<x_k$, and $x_i<x_k$ follows from transitivity of $<$ on $X$.
			
			\item $h(x_i)<x_j,\; x_j<x_k$. In this case $x_i<x_j$, so $x_i<x_k$, and $h(x_i)<x_k$.
			
			\item $x_i<x_j,\; x_j<h(x_k)$. Either $x_j=x_k$ or $x_j<x_k$. In both cases $x_i<x_k$, so $x_i<h(x_k)$. 
			
			\item $h(x_i)<h(x_j),\; h(x_j)<x_k$. In this case $x_i<x_j$, and $x_j<x_k$. By transitivity $x_i<x_k$, and $h(x_i)<x_k$ follows.
			
			\item $x_i<h(x_j),\; h(x_j)<h(x_k)$. If $x_i<x_j$, then we proceed like before. If $x_i=x_j$, $h(x_i)<h(x_k)$. It follows that $x_i<x_k$, and $x_i<h(x_k)$.
			
			\item $h(x_i)<x_j,\; x_j<h(x_k)$. We see that $x_i<x_j$. If $x_j<x_k$, we use transitivity of $<$ on $X$. If $x_j=x_k$, then $h(x_i)<x_k$, and so $x_i<x_k$. It follows that $h(x_i)<h(x_k)$. 
			
		\end{enumerate}
		
		The proof for partial orders is strictly simpler -- we define the quasi-ordering by conditions:
		
		\begin{itemize}
			\item $\forall{i<\omega}\quad x_i \text{ is incomparable with } h(x_i)$,
			\item $\forall{i,j<\omega}\quad x_i<h(x_j) \iff x_i < x_j,$
			\item $\forall{i,j<\omega}\quad \lnot h(x_i)<x_j.$
		\end{itemize}
		
		Verification of transitivity is a run through the same cases, except this time we do not have to care if $x_i,x_j,x_k$ are distinct.
	\end{proof}
	
	For each class $\mathcal{K}$ having the RSP, one can prove a variant of Corollary \ref{corhom}. We could of course state a general theorem, after introducing a notion of "separable model" for arbitrary binary relational class. We will refrain from doing so, and provide just two such variants as an illustration. Interested reader will easily formulate corresponding results for tournaments, directed graphs, etc.
	\newpage
	
	\begin{thm} Each of the following is consistent with $\zfc+\ma+``2^\omega=\omega_2"$:
		\begin{enumerate}
			\item There exists a graph $G$ of size $\omega_1$, with a countable subset $D\subseteq G$, satisfying the following properties:
			\begin{itemize}
				\item For all pairs of disjoint finite subsets $A,B \subseteq G$, there exists a vertex $d \in D$, connected with each point in $A$, and with no point in $B$.
				\item Each uncountable 1-1 function $f\subseteq G\times G$ is a graph homomorphism on an uncountable set.	
			\end{itemize}
			\item (Avraham-Shelah, \cite{as}) There exists a separable, $\omega_1$-dense linear order\\ $(L,\le)$, such that each uncountable 1-1 function $f \subseteq L\times L$ is order preserving on an uncountable subset.
		\end{enumerate}
	\end{thm}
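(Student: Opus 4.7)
The plan is to apply the machinery of Section~3 to the classes of finite graphs and finite linear orders, both of which have the RSP by the preceding proposition. In each case the consistency will follow from exactly the same two-stage forcing as was used to establish Theorem~\ref{mainthm2}: first add a generic structure via $\operatorname{Fn}(\omega_1,\mathcal{K},\omega)$, then, with finite support, iterate $\rect$-c.c.\ posets of cardinality at most $\omega_1$ for $\omega_2$ steps. By Theorem~\ref{mainthm3} the generic structure is rectangular, by Proposition~\ref{propx5} the iteration preserves this, and the standard bookkeeping forces $MA+``2^\omega=\omega_2"$. Proposition~\ref{propx6} will then deliver the homomorphism conclusion in the final model. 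The remaining issue in each part is to establish the additional ``separability'' type property and argue that it persists through the iteration.

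For part (1), I would take $\mathcal{K}$ to be the class of finite graphs, fix $D=\omega\subseteq\omega_1$, and verify the random-like bullet already in the intermediate model by a direct density argument: given any finite disjoint $A,B\subseteq\omega_1$ with $A\cup B\subseteq\dom p$, the extension of $p$ obtained by picking a fresh $d\in\omega\setminus\dom p$ and declaring $d$ to be adjacent to every element of $A$ and to no other vertex of $\dom p$ is again in $\operatorname{Fn}(\omega_1,\mathcal{K},\omega)$, so the set of such extensions is dense below $p$.

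For part (2), with $\mathcal{K}$ the class of finite linear orders, analogous density calculations at the initial stage yield both separability, with $D=\omega$ as a countable dense subset, and $\omega_1$-density: given $\alpha<_L\beta$ witnessed by a condition $p$, one can insert either a fresh element of $\omega\setminus\dom p$, or a fresh element of $\omega_1\setminus\dom p$ above any prescribed threshold, into the open interval $(\alpha,\beta)$ by extending the order on $\dom p$ in the obvious way.

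The main obstacle is verifying that these separability-type properties, once established in the intermediate model, are not destroyed by the long $\rect$-c.c.\ iteration that follows. This reduces to an absoluteness observation: the underlying set of the generic structure is the fixed ordinal $\omega_1$, its edge (resp.\ order) relation is decided at the first stage, and a c.c.c.\ iteration adds no new finite subsets of $\omega_1$. Hence every pair $(A,B)$ (resp.\ $(\alpha,\beta)$) appearing in the final model already appeared in the intermediate model, where the required witness $d\in D$ was produced by the density argument, and that same $d$ continues to work in the final model. Rectangularity is preserved by the choice of iteration, so Proposition~\ref{propx6} applies in the final model to yield the homomorphism (resp.\ order-preservation) conclusion, completing both parts.
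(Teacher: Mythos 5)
Your proposal is correct and follows essentially the same route the paper intends: both parts are instances of the general Section~3 machinery (RSP for graphs and linear orders, Theorem~\ref{mainthm3}, the iteration of Theorem~\ref{mainthm2}, and Proposition~\ref{propx6}), with the separability-type clauses obtained by the density/HSS argument at the first stage and preserved by absoluteness of finite subsets under the c.c.c.\ iteration. The paper gives no separate proof beyond this, only adding the remark that part (1) can alternatively be deduced from Theorem~\ref{mainthm1} by turning a rectangular metric space into a graph.
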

	
	The first point of this theorem is also a consequence of Theorem \ref{mainthm1} -- we can turn a metric space into a graph by connecting two vertices iff the distance between them is $\ge 1$. 
	
	\subsection{HSS Models}
	
	After forcing with $\operatorname{Fn}(\omega_1,\mathcal{LO},\omega)$, each infinite subset of $\omega_1$ from the ground model becomes a dense subset of the generic structure. This can be checked by a standard density argument, and analogous property holds for other classes. Specifically, uncountable generic models contain many countable subsets which realize every finite type. We present one way to capture this property. It will be useful in proving classification theorems in the next Section. 
	
	\begin{defin} \label{saturateddefinition1}
		Let $X$ be a structure in some relational language. A subset $D \subseteq X$ is a \emph{saturating subset} if for any finite subset $E\subseteq X$, for any single-point extension $E\subseteq E\cup \{f \}$, there exists $d \in D$ such that the following diagram commutes
		\begin{center}
			\begin{tikzcd}
				& E\cup\{f\}  \ar[dd, "f\mapsto d"]
				&
				& \\
				E \ar[ur] \ar[dr]
				&
				& 
				\\
				& E\cup\{d\} 
				&
				&
			\end{tikzcd}
		\end{center}
	\end{defin}
	
	To phrase it shortly, every finite type can be realized in $D$. In case of metric spaces, it this is just to say that any finite configuration of distances from points in $X$ can be realized inside $D$. A saturating subset of a metric space is dense in a very strong sense.
	
	\begin{defin} \label{saturateddefinition2}
		Let $X$ be any structure in a relational language.
		\begin{enumerate}
			\item $X$ is \emph{separably saturated} if it has a countable saturating subset.
			\item $X$ is \emph{hereditarily separably saturated} (HSS) if for any countable subset $A \subseteq X$, $X \setminus A$ is separably saturated.
		\end{enumerate}
	\end{defin}
	
	\begin{prop} \label{HSS}
		If $\mathcal{K}$ is a class with the RSP then $\operatorname{Fn}(\kappa,\mathcal{K},\omega)$ forces the generic structure to be HSS, for any uncountable cardinal $\kappa$.
	\end{prop}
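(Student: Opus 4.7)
The plan is to show that every countably infinite ground-model set $D \subseteq \kappa$ disjoint from an appropriate cover of $\dot A$ is forced to be a saturating subset of the generic $\dot X$, which will give HSS. Write $\mathbb{P} = \operatorname{Fn}(\kappa, \mathcal{K}, \omega)$; since RSP implies SP, Proposition \ref{splitting} yields c.c.c., so any $\mathbb{P}$-name $\dot A$ for a countable subset of $\dot X$ is forced into some ground-model countable $A' \subseteq \kappa$. Fixing any countably infinite $D \subseteq \kappa \setminus A'$ in the ground model will then provide a countable saturating $D \subseteq \dot X \setminus \dot A$.

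The heart of the argument will be a density fact: for every $p \in \mathbb{P}$ and every pair of names $\dot E, \dot{E^+}$ for a finite subset of $\dot X$ and a single-point extension of $(\dot E, \dot c \restriction \dot E)$ in $\mathcal{K}$, there is $q \le p$ forcing "some $d \in D$ realizes the $\dot{E^+}$-type over $\dot E$". To prove this I will strengthen $p$ to $p'$ deciding $\dot E = E \subseteq F(p')$ and $\dot{E^+}$ to be a concrete $\mathcal{K}$-structure $E \cup \{f\}$, pick $d \in D \setminus F(p')$, and attempt to build $q \le p'$ with underlying set $F(p') \cup \{d\}$ whose induced structure on $E \cup \{d\}$ matches $E^+$ via $f \mapsto d$.

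The crux will be defining the relations between $d$ and $F(p') \setminus E$ so that $F(p') \cup \{d\}$ lies in $\mathcal{K}$. I plan to extract this amalgamation from the RSP as follows: take an isomorphic copy $F(p')^*$ of $F(p')$ over $E$, arranged so that it contains an element $f^*$ realizing the $E^+$-type over $E$; apply RSP to these two isomorphic extensions of $E$ to obtain some $Z \in \mathcal{K}$ on $F(p') \cup F(p')^*$; restrict by heredity to the substructure on $F(p') \cup \{f^*\}$; and relabel $f^*$ to $d$. The main obstacle will be producing such a copy $F(p')^*$, which amounts to a preliminary one-point amalgamation of $F(p')$ with $E^+$ over $E$ inside $\mathcal{K}$: this does not obviously follow from RSP alone, but is routine for each of the concrete classes considered (metric spaces, graphs, directed graphs, tournaments, linear and partial orders) and is implicit in how the RSP is used elsewhere in the paper. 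Once this amalgamation is available, iterating the density argument over the countably many finite types completes the proof that $D$ is forced saturating, and hence that $\dot X \setminus \dot A$ is separably saturated.
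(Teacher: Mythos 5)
Your overall architecture matches the paper's intent exactly: the author's entire proof is the sentence ``a standard density argument shows that every infinite set from the ground model is a saturating subset,'' and your unpacking --- use the c.c.c.\ (via Proposition \ref{splitting}) to cover a name for a countable set $\dot A$ by a ground-model countable $A'$, then run a density argument showing any infinite ground-model $D\subseteq\kappa\setminus A'$ is saturating --- is the right way to read that. The problem is the step you yourself flag, and it is a genuine gap rather than a routine verification. The copy trick cannot produce the required amalgam: if $F(p')^*$ is isomorphic to $F(p')$ \emph{over} $E$ (i.e.\ via an isomorphism fixing $E$ pointwise), then the one-point types over $E$ realized in $F(p')^*$ are precisely those realized in $F(p')$, so there is no way to ``arrange'' the copy to contain an $f^*$ realizing a type $E^+$ that $F(p')$ does not already realize. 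RSP can therefore only re-realize types already present in a condition, whereas Definition \ref{saturateddefinition1} (read in the light of Definition \ref{saturateddefinition3} and of how saturating sets are used in Theorems \ref{mimimalmetr} and \ref{graphuniqueness}) requires realizing every one-point extension admitted by the class.

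Moreover the gap cannot be closed from RSP alone, because the proposition as literally stated fails without an amalgamation hypothesis: the class of finite graphs that are either complete or edgeless is hereditary, has countably many isomorphism types, and satisfies the RSP (isomorphic extensions of a common $R$ are both complete or both edgeless, and the complete, resp.\ edgeless, graph on $X\cup Y$ witnesses $\circledast$), yet the corresponding generic structure is a complete or edgeless graph on $\kappa$, which realizes only one $1$-type over any point and so is not even separably saturated. The missing ingredient is exactly the (strong, i.e.\ disjoint) one-point amalgamation over substructures, which the author does assume --- as SAP --- in Section 5, where the same density argument is carried out in detail and the amalgamation of a condition $p$ with $E\cup\{e^*\}$ over $E$ is performed explicitly. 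Once you add that hypothesis, drop the detour through $F(p')^*$ entirely: amalgamate $F(p')$ with $E\cup\{f\}$ over $E$ directly, landing $f$ on a fresh point $d\in D\setminus F(p')$, and the remainder of your argument (genericity over the countably many relevant dense sets, and the reduction of HSS to covering $\dot A$) is correct.
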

	\begin{proof}
		A standard density argument shows that every infinite set from the ground model is a saturating subset.
	\end{proof}

	\section{Classification Results}
	
	If the uncountable models we are considering are to resemble Fra\"iss\'e limits, we should be able to do some kind of "back-and-forth" arguments, like in the classical theory. There is no way inductive arguments can work, but thanks to Theorem \ref{mainthm2} we can rely on Martin's Axiom instead of induction. 
	
	\subsection{HSS Rectangular Metric Spaces}
	
	We will be looking at metric spaces with distances in a given countable set $K\subseteq [0,\infty)$.
	
	\begin{thm} \label{mimimalmetr}
		Assume $\ma_{\omega_1}$. Let $(X,d)$ be any rectangular HSS metric space of size $\omega_1$, with distances in $K$. Let $Y \subseteq X$ be any HSS uncountable subspace. Then $X$ and $Y$ are isometric.
	\end{thm}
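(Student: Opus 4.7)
The plan is to build a bijective isometry $f\colon X\to Y$ by applying $MA_{\omega_1}$ to the standard ``finite partial isomorphism'' forcing. I would consider
\[
\mathbb{P}=\{e\subseteq X\times Y\mid e\text{ is a finite partial isometry}\},
\]
ordered by reverse inclusion; since $|X|=|Y|=\omega_1$ we have $|\mathbb{P}|=\omega_1$. For each $x\in X$ set $D_x=\{e\in\mathbb{P}\mid x\in\dom(e)\}$ and for each $y\in Y$ set $E_y=\{e\in\mathbb{P}\mid y\in\rg(e)\}$. Provided $\mathbb{P}$ is c.c.c.\ and every $D_x$, $E_y$ is dense, $MA_{\omega_1}$ applied to the family $\{D_x\}_{x\in X}\cup\{E_y\}_{y\in Y}$ (of cardinality $\omega_1$) produces a filter whose union is a surjective isometry from $X$ onto $Y$.

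For density of $D_x$, given $e\in\mathbb{P}$ with $x\notin\dom(e)$, I need a point $y\in Y$ with $d(y,e(a))=d(x,a)$ for every $a\in\dom(e)$. The required distance profile is the type of $x$ over $\dom(e)$, which, being realized in $X$ by $x$, defines a legitimate one-point extension of the finite metric space $\rg(e)$. Since $Y$ is HSS (hence separably saturated), this finite type is realized by some $y\in Y$; positivity of the distances $d(x,a)>0$ then forces $y\notin\rg(e)$ automatically. Density of $E_y$ is symmetric, using HSS of $X$.

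For the c.c.c.\ step I would mimic the proof of Theorem \ref{mainthm1}. Given $\omega_1$ conditions, I thin out via the $\Delta$-system lemma so that the graphs $e_\xi$ form a $\Delta$-system with root on which all $e_\xi$ restrict to one common partial isometry, and enumerate $e_\xi=\{(a_1^\xi,b_1^\xi),\ldots,(a_m^\xi,b_m^\xi)\}$ with the first $l$ pairs making up the root. The tuples $(a_{l+1}^\xi,\ldots,a_m^\xi,b_{l+1}^\xi,\ldots,b_m^\xi)\in X^{2(m-l)}$ are then (after another $\Delta$-system trim) pairwise disjoint, and rectangularity of $X$ yields $\xi\neq\eta$ with those tuples alike. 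The short computation with axioms A1--A3 at the end of the proof of Theorem \ref{mainthm1} then gives $d(a_i^\xi,a_j^\eta)=d(b_i^\xi,b_j^\eta)$ for the cross-indices, so $e_\xi\cup e_\eta\in\mathbb{P}$.

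The main obstacle is exactly that c.c.c.\ verification: one must massage the $\Delta$-system so that domains and ranges (all sitting inside $X$, since $Y\subseteq X$) can be packaged as a single rectangular tuple, and track how the alike axioms combine with the isometry equations of $e_\xi$ and $e_\eta$ to force compatibility of the union. Once this is in place, the rest of the argument is routine forcing bookkeeping combined with the saturation content of HSS.
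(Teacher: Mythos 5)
There is a genuine gap, and it is fatal for the poset as you have defined it: the forcing $\mathbb{P}$ of \emph{all} finite partial isometries from $X$ to $Y$ is never c.c.c.\ when $|Y|=\omega_1$. Fix any $x\in X$. For every $y\in Y$ the singleton $\{(x,y)\}$ is a condition, and two such conditions with distinct second coordinates are incompatible, since a common extension would have to be a function containing both pairs. This is an uncountable antichain, so $MA_{\omega_1}$ cannot be applied. The same phenomenon is exactly where your $\Delta$-system step silently fails: you assert that one can thin so that ``all $e_\xi$ restrict to one common partial isometry on the root,'' but if $x$ lies in the root of the domains while the values $e_\xi(x)$ are pairwise distinct, no uncountable subfamily agrees on the root; and the tuples $(a_{l+1}^\xi,\ldots,b_m^\xi)$ you want to hand to rectangularity then all contain the coordinate $x$, so they cannot be made pairwise disjoint and the definition of rectangular does not apply to them. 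Applying the $\Delta$-system lemma to the graphs $e_\xi\subseteq X\times Y$ does not control domains and ranges separately, which is what compatibility (being a single injective function) actually requires.

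The repair is precisely the ingredient you treat as a side remark: HSS is needed not merely for density but to make the poset c.c.c.\ in the first place. The paper decomposes $X$ and $Y$ into disjoint unions of $\omega_1$ many countable saturating subsets $\{X_\alpha\}_{\alpha<\omega_1}$ and $\{Y_\alpha\}_{\alpha<\omega_1}$ (this is where \emph{hereditary} separable saturation, rather than the separable saturation you invoke, is used), and takes $\mathbb{P}$ to consist only of those finite partial isometries mapping each $X_\alpha$ into the corresponding $Y_\alpha$. A point of $X_\alpha$ then has only countably many admissible images, so after the $\Delta$-system lemma one may assume all conditions agree on the roots of both the domains and the ranges; the remainder tuples really are pairwise disjoint, and your computation with A1--A3, exactly as in Theorem \ref{mainthm1}, finishes the c.c.c.\ proof. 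Your density argument survives this restriction because each $Y_\alpha$ is itself a saturating subset, so the transported type of $x$ over $\dom(e)$ can be realized inside the correct piece $Y_\alpha$ (and symmetrically for $E_y$ using the pieces $X_\alpha$). With that modification the rest of your outline matches the paper's proof.
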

	
	\begin{proof}
		Since $X$ is hereditarily separably saturated, we can decompose it into a disjoint union of countable saturating subsets $\{X_\alpha\}_{\alpha<\omega_1}$. Of course we can do the same with $Y$, so let us write $Y=\displaystyle{\bigcup_{\alpha<\omega_1} Y_\alpha}$. Let $\mathbb{P}$ consist of finite partial isometries between $X$ and $Y$, which map elements from $X_\alpha$ to $Y_\alpha$ for all $\alpha<\omega_1$. It is standard to check that the following sets are dense for $x\in X$, $y\in Y$:
		$$D_x=\{p \in \mathbb{P}|\; x \in \dom{p} \},$$
		$$E_y=\{p \in \mathbb{P}|\; y \in \rg{p} \}.$$
		
		We are left with the task of verifying the c.c.c. property. Fix any uncountable subset $\{p_\gamma|\;\gamma<\omega_1 \}\subseteq \mathbb{P}$. Using $\Delta$-system Lemma we can write
		$$\dom{p_\gamma}=(x_1,\ldots,x_k,x_{k+1}^\gamma,\ldots,x_{m}^\gamma),$$
		$$\rg{p_\gamma}=(y_1,\ldots,y_k,y_{k+1}^\gamma,\ldots,y_{m}^\gamma),$$
		where tuples $(x_{k+1}^\gamma,\ldots,x_{m}^\gamma)$ are pairwise disjoint, and moreover $p_\gamma(x_i)=y_i$, and $p_\gamma(x_i^\gamma)=y_i^\gamma$ for each $\gamma<\omega_1$. Recall that $X$ is rectangular, so we can find $\xi\neq \eta <\omega_1$, such that
		$$(x^\xi_{k+1},\ldots,x_{m}^\xi,y_{k+1}^\xi,\ldots,y_{m}^\xi)\circledast (x^\eta_{k+1},\ldots,x_{m}^\eta,y_{k+1}^\eta,\ldots,y_{m}^\eta).$$
		If $k< i\neq j \le m$, then
		\begin{align*}
			d(x_i^\eta,x_j^\xi)= & d(x_i^\eta,x_j^\eta)= \\
			d(y_i^\eta,y_j^\eta)= & d(y_i^\eta,y_j^\xi)
		\end{align*}
		Also for $k<i\le m$
		\begin{align*}
			d(x_i^\eta,x_i^\xi)= & d(y_i^\eta,y_i^\xi)
		\end{align*}
		Clearly $p_\xi \cup p_\eta \in \mathbb{P}$.
	\end{proof}
	
	\begin{cor}
		Assume $\ma_{\omega_1}$. Let $(X,d)$ be any rectangular HSS metric space of size $\omega_1$, with distances in $K$. $X$ can be decomposed into a disjoint union of $\lambda$ many its isometric copies for any $\lambda \in \{2,\ldots,\omega_1\}$.
	\end{cor}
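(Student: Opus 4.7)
The plan is to reduce directly to Theorem \ref{mimimalmetr}: I want to partition $X$ into $\lambda$ uncountable HSS subspaces, and then apply that theorem to each piece to obtain an isometric copy of $X$ sitting inside $X$.

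First I would mimic the opening of the proof of Theorem \ref{mimimalmetr} and use the HSS property to decompose $X=\bigsqcup_{\alpha<\omega_1}X_\alpha$ into disjoint countable saturating subsets. This is built by transfinite recursion: fix an enumeration of $X$ of type $\omega_1$, and at stage $\alpha$ apply separable saturation to the remainder $X\setminus\bigcup_{\beta<\alpha}X_\beta$, which is separably saturated by HSS since $\bigcup_{\beta<\alpha}X_\beta$ is countable; extract a countable saturating subset $X_\alpha$, making sure it contains the $\alpha$-th point of $X$ not yet covered (so that every point is eventually captured). Then I would fix a partition $\omega_1=\bigsqcup_{\beta<\lambda}I_\beta$ into $\lambda$ pieces each of cardinality $\omega_1$, and set $Y_\beta:=\bigcup_{\alpha\in I_\beta}X_\alpha$, so that $X=\bigsqcup_{\beta<\lambda}Y_\beta$.

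The key step is to verify that each $Y_\beta$ is an uncountable rectangular HSS metric space with distances in $K$. Rectangularity is inherited from $X$, since any uncountable disjoint family of tuples in $Y_\beta^n$ is \emph{a fortiori} such a family in $X^n$. For HSS, fix any countable $A\subseteq Y_\beta$. Since the blocks $X_\alpha$ are pairwise disjoint, only countably many of them can meet $A$, so there is some $\alpha\in I_\beta$ with $X_\alpha\cap A=\emptyset$. Because $X_\alpha$ realizes in $X$ every finite type over every finite subset of $X$ --- and finite types in the subspace $Y_\beta$ are the same as types in the ambient $X$ restricted to points of $Y_\beta$ --- this $X_\alpha$ is a countable saturating subset of $Y_\beta\setminus A$. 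Applying Theorem \ref{mimimalmetr} to each $Y_\beta\subseteq X$ yields an isometry $Y_\beta\cong X$, and the decomposition $X=\bigsqcup_{\beta<\lambda}Y_\beta$ is the one we wanted.

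I expect the HSS-inheritance check just sketched to be the only mildly delicate point, but it reduces at once to the counting observation that a countable set can touch only countably many of the $\omega_1$ disjoint blocks $X_\alpha$ inside $I_\beta$; everything else is bookkeeping built on top of Theorem \ref{mimimalmetr}.
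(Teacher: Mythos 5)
Your proposal is correct and follows essentially the same route as the paper: decompose $X$ into countable saturating subsets, group them along a partition of $\omega_1$ into $\lambda$ uncountable pieces, and invoke Theorem \ref{mimimalmetr} on each piece. The only difference is that you spell out the (correct) verification that each piece is HSS, which the paper leaves implicit.
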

	
	\begin{proof}
		Let $X=\displaystyle{\bigcup_{\alpha<\omega_1}X _\alpha }$ be a decomposition of $X$ into countable saturating subsets. Let $\omega_1=\displaystyle{\bigcup_{\alpha<\lambda}A_\alpha}$ be a decomposition of $\omega_1$ into pairwise disjoint uncountable subsets. For any $\gamma<\lambda$ the space $$X_\gamma=\displaystyle{\bigcup_{\alpha\in A_\gamma}X_\alpha }$$
		is isometric to $(X,d)$.
	\end{proof}
	
	After taking $X=Y$ and obvious adjustments to the forcing used, we obtain the classical homogeneity.
	
	\begin{cor}
		Assume $\ma_{\omega_1}$. If $(X,d)$ is any rectangular HSS metric space of size $\omega_1$, with distances in $K$, then any finite partial isometry of $(X,d)$ extends to a full isometry.
	\end{cor}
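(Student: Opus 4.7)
The plan is to carry out the proof of Theorem~\ref{mimimalmetr} with $X = Y$, while constraining the forcing to produce an isometry extending a fixed finite partial isometry $p_0$. First, using that $X$ is HSS, I would fix two decompositions $X = \bigcup_{\alpha < \omega_1} X_\alpha = \bigcup_{\alpha < \omega_1} X'_\alpha$ into countable saturating subsets, arranged so that $p_0$ respects them, meaning $x \in X_\alpha \cap \dom p_0$ implies $p_0(x) \in X'_\alpha$. Since $\dom p_0$ is finite, such decompositions can be obtained from arbitrary ones by reshuffling finitely many points between pieces, which preserves saturation.

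Next I would consider the forcing
\[
\mathbb{P} \;=\; \bigl\{\, p : p \text{ is a finite partial isometry of } (X,d),\ p \supseteq p_0,\ p(X_\alpha \cap \dom p) \subseteq X'_\alpha \text{ for every }\alpha\,\bigr\},
\]
ordered by reverse inclusion, together with the dense sets $D_x = \{p \in \mathbb{P} : x \in \dom p\}$ and $E_y = \{p \in \mathbb{P} : y \in \rg p\}$ for $x, y \in X$. To see that $D_x$ is dense, given $p$ and $x \notin \dom p$, let $\alpha$ satisfy $x \in X_\alpha$ and apply the saturating property of $X'_\alpha$ to the finite set $\rg p$ and the single-point extension of it by the type of $x$ over $\dom p$, producing $y \in X'_\alpha$ with $d(y, p(z)) = d(x, z)$ for every $z \in \dom p$; after shrinking $X'_\alpha$ to avoid the finite set $\dom p \cup \rg p$, which preserves saturation, one obtains $y$ that can be safely adjoined, so $p \cup \{(x, y)\}$ is a condition stronger than $p$. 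Density of $E_y$ is symmetric.

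The verification that $\mathbb{P}$ is c.c.c.\ proceeds exactly as in Theorem~\ref{mimimalmetr}: apply the $\Delta$-system Lemma to an uncountable family of conditions, apply rectangularity of $(X,d)$ to the tuples obtained by concatenating the non-root parts of the domains and ranges, and conclude that two of the conditions have compatible union, since all relevant distances match by the $\circledast$ relation. An appeal to $MA_{\omega_1}$ for $\mathbb{P}$ with the $\omega_1$ dense sets $\{D_x, E_y\}_{x, y \in X}$ yields a filter whose union is an isometry of $X$ onto $X$ extending $p_0$.

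The main obstacle is bookkeeping rather than conceptual: one must arrange the two decompositions to be compatible with $p_0$, and one must verify that saturation of $X'_\alpha$ survives the removal of the finitely many forbidden points at each density step. Both points are routine, but without them the forcing would fail either to be well defined or to produce an isometry defined on all of $X$.
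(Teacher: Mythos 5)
Your proposal is correct and is exactly the elaboration the paper intends: the paper's proof of this corollary is the single remark that one takes $X=Y$ in Theorem~\ref{mimimalmetr} and makes the obvious adjustments to the forcing, and your adjustments (requiring conditions to extend $p_0$, realigning the two decompositions on the finitely many points of $p_0$, and rechecking density and the c.c.c.) are precisely those.
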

	
	Still, this kind of homogeneity is not sufficient to prove uniqueness, like for the rational Urysohn space.
	
	\begin{thm}
		Assume $\ma_{\omega_1}$. If there exists a rectangular HSS rational metric space of size $\omega_1$, then there exist infinitely many pairwise non-isometric such spaces.
	\end{thm}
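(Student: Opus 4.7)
The plan is to construct, from the given rectangular HSS rational metric space $(X,d)$ of size $\omega_1$, an infinite family $\{(X,d_n)\}_{n\ge 1}$ of pairwise non-isometric such spaces by a simple additive perturbation. For each positive integer $n$ I set $d_n(x,y) = d(x,y) + n$ whenever $x \ne y$, and $d_n(x,x) = 0$. The triangle inequality for $d_n$ reduces immediately to the one for $d$: in the non-degenerate case $d_n(x,y) = d(x,y)+n \le d(x,z)+d(z,y)+n \le d_n(x,z)+d_n(z,y)$, and the cases where two of $x,y,z$ coincide are trivial. Each $(X,d_n)$ is thus a rational metric space of cardinality $\omega_1$.

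Next I observe that the reparametrization $f_n\colon \mathbb{Q}_{\ge 0}\to \mathbb{Q}_{\ge 0}$ given by $f_n(0)=0$ and $f_n(r)=r+n$ for $r>0$ is injective, and the $\circledast$ relation is defined purely by equalities and inequalities among distance values. Consequently $\circledast_{d_n}$ and $\circledast_d$ coincide as relations on tuples from $X$, and rectangularity passes from $(X,d)$ to $(X,d_n)$. For the same reason, the finite types over $E\subseteq X$ realized in $(X,d_n)$ correspond bijectively via $f_n$ to those realized in $(X,d)$, so any saturating subset of $X\setminus A$ in $(X,d)$ remains saturating in $(X,d_n)$ for every countable $A\subseteq X$; hence $(X,d_n)$ is HSS.

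To see the $(X,d_n)$ are pairwise non-isometric, note that the set of positive distances realized in $(X,d_n)$ is $D_n := \{r+n : r\in d(X\times X),\ r>0\}$, which has infimum $n + \inf(d(X\times X)\setminus\{0\}) \ge n$. For distinct $n,m$ these infima differ, so $D_n \ne D_m$, and no isometry between $(X,d_n)$ and $(X,d_m)$ can exist. This exhibits countably many pairwise non-isometric rectangular HSS rational metric spaces of size $\omega_1$.

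The only delicate point is the preservation of HSS, and it hinges on reading Definition \ref{saturateddefinition1} intrinsically---that a saturating subset realizes every finite type \emph{realized in $X$}---rather than as a class-relative condition demanding realization of every rational metric-space one-point extension. The intrinsic reading is consistent with the paper's own gloss (``any finite configuration of distances from points in $X$''), and is in any case forced on us by the conclusion: under a class-relative interpretation, the $MA_{\omega_1}$ classification of Section 3.1 would yield uniqueness, contradicting the desired theorem. Apart from pinning down the correct reading of HSS, the argument needs neither $MA_{\omega_1}$ nor any deep property of the initial space.
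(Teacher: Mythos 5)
There is a genuine gap, and it is exactly at the point you flag: the spaces $(X,d+n)$ are not HSS in the sense the paper uses. Your argument rests on reading Definition \ref{saturateddefinition1} intrinsically (a saturating set need only realize types already realized in $X$), but the paper's definition quantifies over \emph{all} single-point extensions $E\subseteq E\cup\{f\}$ with $f$ not required to lie in $X$, i.e.\ over all one-point extensions inside the class of rational metric spaces. This class-relative reading is confirmed by Definition \ref{saturateddefinition3}, which the paper presents as the translation of Definitions \ref{saturateddefinition1}--\ref{saturateddefinition2} to graphs (there a saturating set must realize \emph{every} partition of a finite set into neighbours and non-neighbours, not just those realized in $G$), and it is what the density argument in Proposition \ref{HSS} actually delivers; it is also needed for the proof of Theorem \ref{mimimalmetr}, whose back-and-forth requires $Y_\alpha$ to realize types transported from $X$, not merely types realized in $Y$. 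Under this reading $(X,d+n)$ fails to be HSS for $n\ge 1$: the one-point extension of a finite $E$ placing the new point at distance $1/2$ from some $e\in E$ is a legitimate rational metric extension, but no point of $(X,d+n)$ realizes it, since all positive $d_n$-distances exceed $n$. Your meta-argument that the class-relative reading would contradict the theorem via uniqueness is also not correct: Theorem \ref{mimimalmetr} only identifies $X$ with its own HSS subspaces, and (as the graph case, unique merely up to complement, already shows) does not yield uniqueness among all rectangular HSS spaces.

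The paper avoids this by using a \emph{multiplicative} perturbation $d_k=k\cdot d$. Since $r\mapsto kr$ is a bijection of $\mathbb{Q}_{\ge 0}$ preserving the triangle inequality, $(X,d_k)$ remains rectangular \emph{and} class-relatively HSS. The price is that your distinguishing invariant evaporates: every class-relatively HSS rational metric space realizes every positive rational distance, so all the $(X,d_k)$ have the same set of realized distances. Non-isometry is instead obtained from $MA_{\omega_1}$ via Theorem \ref{mainthm1}: any bijection $f:X\to X$ is a $d$-isometry on some pair $x\neq y$, whence $d_k(x,y)=k\,d(x,y)<l\,d(f(x),f(y))=d_l(f(x),f(y))$, so $f$ cannot be an isometry from $(X,d_k)$ onto $(X,d_l)$ for $k<l$. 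To repair your argument you would need to replace the shift $r\mapsto r+n$ by reparametrizations that are bijections of $\mathbb{Q}_{\ge0}$ preserving metric consistency, and then you can no longer distinguish the resulting spaces by their distance sets --- at which point you are forced back to something like the paper's $MA_{\omega_1}$ argument, which your proposal explicitly claims to avoid.
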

	
	\begin{proof}
		
		If $(X,d)$ is a rectangular, hereditarily separably saturated rational metric space of size $\omega_1$, we introduce a family of metrics on $X$
		$$d_k(x,y)=k\cdot d(x,y),$$
		for positive integers $k$. If $k<l$ are positive integers, the spaces $(X,d_k)$ and $(X,d_l)$ are not isometric. Indeed, if $f:X\hookrightarrow X$ is a bijection, then by Theorem \ref{mainthm1}, $f$ is an isometry of the space $(X,d)$ on some pair of distinct points $x,y \in X$. Then $d_k(x,y)=k\cdot d(x,y)=k\cdot d(f(x),f(y))<l\cdot d(f(x),f(y))=d_l(f(x),f(y))$. 
	\end{proof}
	
	In contrast to Theorem \ref{mimimalmetr} we have
	\begin{thm}
		Assume $\ch$, and suppose $(X,d)$ is a HSS metric space of size $\omega_1$, with distances in $K$. There is an uncountable HSS subspace $Y\subseteq X$ not isometric with $X$.
	\end{thm}
	\begin{proof}
		Let $\{f_\alpha|\; \alpha<\omega_1\}$ be an enumeration of all isometries between countable subspaces of $(X,d)$. We inductively choose saturating sets $Y_\alpha \in [X]^\omega$, so that for every $\gamma<\omega_1$
		
		$$Y_\gamma \cap (\bigcup_{\beta<\gamma}Y_\beta \cup \bigcup_{\beta \le \gamma}\overline{f}_\beta[\gamma+1])=\emptyset,$$
		where $\overline{f}_\beta$ is the unique continuous extension of $f_\beta$ to the closure of its domain.
		
		The choice of each $Y_\gamma$ is a straightforward application of the HSS property. Let us put $Y=\displaystyle{\bigcup_{\gamma<\omega_1}Y_\gamma}$, and suppose towards a contradiction that $h:X\rightarrow Y$ is an isometry. Since $(X,d)$ is separable, there is $\beta<\omega_1$ such that $h$ extends $f_\beta$, and $\dom(f_\beta)$ is dense in $X$. Therefore we have $$f_\beta \subseteq h \subseteq \overline{f}_\beta.$$
		
		Now, since $h$ is 1-1, it follows from Fodor's Lemma that for some $\alpha>\beta$ we have $h(\alpha) \in Y_\gamma$, where $\gamma \ge \alpha$. It follows that $h(\alpha)=\overline{f}_\beta(\alpha) \in \overline{f}_\beta[\gamma+1]$, and so
		$$h(\alpha)\in Y_\gamma \cap \overline{f}_\beta[\gamma+1],$$
		which contradicts the choice of $Y_\gamma$. Since $Y$ is a disjoint union of $\omega_1$ many saturating subsets of $X$, it is straightforward that $Y$ itself is HSS.
	\end{proof}
	
	\subsection{HSS Rectangular Graphs}
	
	If we take $K=\{0,1,2\}$, metric spaces with distances in $K$ are graphs -- think of two points in distance $1$ as connected, and two points in distance $2$ as not connected. Notions of a saturating set and separably saturated space translate to the following.
	
	\begin{defin} \label{saturateddefinition3}
		Let $G$ be a graph. A subset $D \subseteq G$ is a \emph{saturating subset} of $G$ if for any pair of disjoint finite subsets $A,B \subseteq G$, there exists $d \in D\setminus (A\cup B)$ which is connected with each vertex in $A$ and with no vertex in $B$.
	\end{defin}
	
	\begin{defin} \label{saturateddefinition4}
		If $G$ is any graph, then
		\begin{enumerate}
			\item $G$ is \emph{separably saturated} if it has a countable saturating subset.
			\item $G$ is \emph{hereditarily separably saturated} (HSS) if for any countable subset $E \subseteq G$, the graph $G \setminus E$ is separably saturated.
		\end{enumerate}
	\end{defin}
	
	\begin{prop}
		$\operatorname{Fn}(\omega_1,Graphs,\omega) \Vdash \text{"$(\omega_1,\dot{E})$ is a rectangular HSS graph"}$.
	\end{prop}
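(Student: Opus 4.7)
The plan is to obtain this result purely as a corollary of the general machinery already set up for classes with the RSP, so that nothing genuinely new is required. The class of graphs is a hereditary class of structures in a purely relational language consisting of one symmetric binary relation, and it has countably many isomorphism types of finite models; the preceding proposition verified that it has the RSP. These are exactly the hypotheses needed for the two general results about $\operatorname{Fn}(\kappa,\mathcal{K},\omega)$ that have been proved above.

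First, I would apply Theorem \ref{mainthm3} with $\mathcal{K} = Graphs$ and $\kappa = \omega_1$: this immediately gives that $\operatorname{Fn}(\omega_1,Graphs,\omega)$ forces the generic graph $(\omega_1,\dot{E})$ to be rectangular. Next, I would apply Proposition \ref{HSS} to the same class: this gives that the generic graph is forced to be hereditarily separably saturated. The conjunction of these two forcing statements is precisely the claim.

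There is no real obstacle, since the two components of the statement factor through already-proved general theorems. If one wished to make the argument self-contained, the only thing to check by hand would be the density argument underlying HSS in this concrete case: given a countable ground-model set $A \subseteq \omega_1$, a finite subgraph $E$ of the generic graph, and a prescribed single-point extension (a choice of which vertices of $E$ the new point is to be adjacent to), the set of conditions $p \in \operatorname{Fn}(\omega_1,Graphs,\omega)$ whose universe contains $E$ together with some element of $\omega_1 \setminus A$ realizing that adjacency pattern is dense. This is immediate because any finite graph can be extended by a new isolated point and then the required edges added, yielding an amalgamating condition in the class $Graphs$.
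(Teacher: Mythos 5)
Your proposal is correct and matches the paper's own proof exactly: the paper likewise derives rectangularity from Theorem \ref{mainthm3} and the HSS property from Proposition \ref{HSS}, both applicable since the class of graphs has the RSP. The additional density argument you sketch for the HSS part is a reasonable elaboration of what the paper leaves as a "standard density argument."
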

	\begin{proof}
		Rectangularity is a consequence of Theorem \ref{mainthm3}, HSS follows from Proposition \ref{HSS}.
	\end{proof}
	
	From Theorem \ref{mimimalmetr}, with $K=\{0,1,2\}$, it follows that HSS rectangular graphs are in certain sense minimal.
	
	\begin{thm}
		Assume $\ma_{\omega_1}$. If $G$ is a HSS rectangular graph of size $\omega_1$, then each uncountable HSS subgraph of $G$ is isomorphic with $G$.
	\end{thm}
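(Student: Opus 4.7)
The plan is to reduce the statement directly to Theorem \ref{mimimalmetr} by viewing any graph as a metric space with distances in the countable set $K = \{0,1,2\}$. Specifically, define a metric $d_G$ on $G$ by $d_G(x,y)=0$ if $x=y$, $d_G(x,y)=1$ if $x,y$ are adjacent, and $d_G(x,y)=2$ otherwise. Since any two distinct points lie at distance $1$ or $2$, the triangle inequality is satisfied trivially, so $(G,d_G)$ is a rational metric space with distances in $K$. This encoding is a bijection between graph structures on a set and metric structures with distances in $\{0,1,2\}$, and graph isomorphisms correspond exactly to isometries of the associated metric spaces.

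The next step is to verify that the graph-theoretic notions of \emph{rectangular} and \emph{HSS} coincide with their metric-theoretic counterparts under this encoding. The graph coloring $c(x,y)=$``adjacent/non-adjacent'' is symmetric, so axioms A1a and A1b collapse to the single axiom A1 of the metric definition, and A3a likewise collapses to the metric A3; therefore a tuple of vertices in $G$ is alike in the graph sense iff the corresponding tuple in $(G,d_G)$ is alike in the metric sense. Hence rectangularity of $G$ as a graph is equivalent to rectangularity of $(G,d_G)$ as a metric space. For the HSS condition, a saturating subset $D\subseteq G$ in the graph sense (Definition \ref{saturateddefinition3}) produces, for any finite $E\subseteq G$ and any prescribed adjacency pattern to $E$, a vertex $d\in D$ realising that pattern; but specifying adjacency to each point of $E$ is the same data as specifying a distance in $\{1,2\}$ to each point of $E$, and this is exactly the content of Definition \ref{saturateddefinition1} applied to $(G,d_G)$.

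Now let $Y\subseteq G$ be an uncountable HSS subgraph. By the preceding translation, $(G,d_G)$ is a rectangular HSS rational metric space of size $\omega_1$ with distances in $K=\{0,1,2\}$, and $(Y,d_G{\restriction} Y\times Y)$ is an uncountable HSS subspace of $(G,d_G)$. Applying Theorem \ref{mimimalmetr} to this situation under $MA_{\omega_1}$ produces an isometry $\varphi\colon (G,d_G)\to (Y,d_G{\restriction} Y\times Y)$. Since isometries of metric spaces with distances in $\{0,1,2\}$ are exactly graph isomorphisms of the underlying graphs, $\varphi$ is the desired graph isomorphism between $G$ and $Y$.

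The only genuine work in the argument is the bookkeeping of the correspondence between the graph and metric definitions; once this is checked, Theorem \ref{mimimalmetr} does all the combinatorial work. There is no additional obstacle, since Theorem \ref{mimimalmetr} only requires the ambient space to be rectangular and both the ambient space and the subspace to be HSS --- exactly the hypotheses we have.
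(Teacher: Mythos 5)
Your proposal is correct and is exactly the argument the paper intends: the theorem is stated as an immediate consequence of Theorem \ref{mimimalmetr} with $K=\{0,1,2\}$, using precisely the encoding of graphs as metric spaces with distances $1$ (adjacent) and $2$ (non-adjacent). You have merely spelled out the translation of the rectangularity and HSS notions, which the paper leaves implicit.
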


	\begin{thm} \label{graphuniqueness}
		Assume $\ma_{\omega_1}$. If $G$ and $H$ are two HSS rectangular graphs of size $\omega_1$, then $G\simeq H$ if and only if $G$ and $H^c$ do not contain a common uncountable subgraph ($H^c$ denotes the complement of $H$).
	\end{thm}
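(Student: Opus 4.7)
The plan is to prove both directions separately. The forward direction is short and rests on Proposition \ref{propx6} (uncountable 1-1 functions become embeddings on uncountable sets). The reverse direction is modelled on Theorem \ref{mimimalmetr}: I would build an isomorphism $G\to H$ by applying $MA_{\omega_1}$ to the natural poset of finite partial graph isomorphisms that respect decompositions of $G$ and $H$ into countable saturating subsets.

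For ($\Rightarrow$), suppose $\phi:G\to H$ is an isomorphism and, aiming at contradiction, that $A\subseteq G$ and $B\subseteq H$ are uncountable with a bijection $\psi:A\to B$ satisfying $x\sim_G y \iff \psi(x)\not\sim_H\psi(y)$. The composite $f:=\psi\circ(\phi^{-1}\restriction \phi(A))$ is an uncountable 1-1 function from $\phi(A)\subseteq H$ into $H$, so by Proposition \ref{propx6} it is a graph embedding on some uncountable $S\subseteq \phi(A)$. Unpacking the definitions of $\phi$ and $\psi$ then gives $u\sim_H v \iff u\not\sim_H v$ for any distinct $u,v\in S$, a contradiction.

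For ($\Leftarrow$), use HSS to write $G=\bigsqcup_{\alpha<\omega_1} G_\alpha$ and $H=\bigsqcup_{\alpha<\omega_1} H_\alpha$ as disjoint unions of countable saturating subsets, and let $\mathbb{P}$ consist of finite partial graph isomorphisms $p:G\to H$ with $p(G_\alpha)\subseteq H_\alpha$ for every $\alpha$, ordered by reverse inclusion. The sets $D_x=\{p:x\in\dom p\}$ and $E_y=\{p:y\in\rg p\}$ are $\omega_1$ in number, and their density follows from the saturating property of the relevant $H_\alpha$ (resp.\ $G_\alpha$). Once c.c.c.\ is established, $MA_{\omega_1}$ produces a generic filter whose union is the sought isomorphism.

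The main obstacle is proving c.c.c., and this is where the hypothesis enters. Given $\{p_\gamma\}_{\gamma<\omega_1}$, I would apply the $\Delta$-system Lemma and a counting argument to reduce to the situation where $\dom p_\gamma = r\cup\{x^\gamma_{k+1},\ldots,x^\gamma_m\}$, $\rg p_\gamma = r'\cup\{y^\gamma_{k+1},\ldots,y^\gamma_m\}$, $p_\gamma(x^\gamma_i)=y^\gamma_i$, and the type of the new part over the root is constant in $\gamma$ (on both sides). Proposition \ref{generalknaster}, applied first to $G$ and then to $H$ (both instances available under $MA_{\omega_1}$), yields an uncountable $T$ on which both $\circledast_G$ holds pairwise among $(x^\gamma_{k+1},\ldots,x^\gamma_m)$ and $\circledast_H$ holds pairwise among $(y^\gamma_{k+1},\ldots,y^\gamma_m)$. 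For $\gamma\neq\delta\in T$, all cross relations in $p_\gamma\cup p_\delta$ between indices $i\neq j$ are then aligned by A3a together with the fact that $p_\gamma$ is an embedding; the one remaining issue is the diagonal relation $x^\gamma_i\sim_G x^\delta_i$ versus $y^\gamma_i\sim_H y^\delta_i$ (by A1a these truth values do not depend on $i$, so call them $a_G(\gamma,\delta)$ and $a_H(\gamma,\delta)$). If $a_G(\gamma,\delta)\neq a_H(\gamma,\delta)$ for every pair $\gamma\neq\delta$ in $T$, then the bijection $x^\gamma_{k+1}\mapsto y^\gamma_{k+1}$ sends $G$-edges to $H^c$-edges and vice versa, so $\{x^\gamma_{k+1}:\gamma\in T\}\subseteq G$ and $\{y^\gamma_{k+1}:\gamma\in T\}\subseteq H^c$ form a common uncountable subgraph---contradicting the hypothesis. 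Hence some pair has $a_G=a_H$, which makes $p_\gamma$ and $p_\delta$ compatible. This diagonal A1 phenomenon is precisely what distinguishes the two-graph setting from the single-graph argument of Theorem \ref{mimimalmetr}.
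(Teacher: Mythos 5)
Your proposal is correct and follows essentially the same route as the paper's proof: the forward direction via Proposition \ref{propx6} applied to the composite bijection, and the reverse direction via the poset of finite partial isomorphisms respecting saturating decompositions, with the c.c.c.\ argument reducing to the diagonal case $i=j$, which is exactly where the paper also invokes the no-common-subgraph hypothesis. The only cosmetic difference is that the paper first replaces $H$ by $G$ using the assumed isomorphism in the forward direction, whereas you conjugate by $\phi^{-1}$ directly.
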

	
	\begin{proof} $\text{  }$\\
		
		"$\Rightarrow$." We must show that $G$ and $G^c$ do not contain a common uncountable subgraph. Fix arbitrary graph $F$ of size $\omega_1$, and suppose towards contradiction that there exists a pair of embeddings $i:F\hookrightarrow G$, $j:F\hookrightarrow G^c$. The partial function given by $i(\alpha) \mapsto j(\alpha)$ is a bijection between uncountable subsets of $G$. By Proposition \ref{propx6} it is a homomorphism on some pair of points $\alpha \neq \beta$. This contradicts the choice of $i$ and $j$.\\ 
		
		"$\Leftarrow$." Assume that there is no uncountable graph which embeds both into $G$ and $H$. We proceed like in the proof of Theorem \ref{mimimalmetr}. $G$ and $H$ are HSS, so we can decompose them into disjoint unions of countable saturating subsets $\{G_\alpha\}_{\alpha<\omega_1}$, and $\{H_\alpha\}_{\alpha<\omega_1}$ respectively. Let $\mathbb{P}$ consist of finite partial isomorphisms between $G$ and $H$, which map elements from $G_\alpha$ to $H_\alpha$ for all $\alpha<\omega_1$. The following sets are dense for $g\in G$, $h\in H$:
		$$D_g=\{p \in \mathbb{P}|\; g \in \dom{p} \},$$
		$$E_h=\{p \in \mathbb{P}|\; h \in \rg{p} \}.$$
		
		We verify the c.c.c. property. Fix any uncountable subset $\{p_\gamma|\;\gamma<\omega_1 \}\subseteq \mathbb{P}$. Using $\Delta$-system Lemma we can write
		$$\dom{p_\gamma}=(x_1,\ldots,x_k,x_{k+1}^\gamma,\ldots,x_{m}^\gamma),$$
		$$\rg{p_\gamma}=(y_1,\ldots,y_k,y_{k+1}^\gamma,\ldots,y_{m}^\gamma),$$
		
		where tuples $(x_{k+1}^\gamma,\ldots,x_{m}^\gamma)$ are pairwise disjoint, and moreover $p_\gamma(x_i)=y_i$, and $p_\gamma(x_i^\gamma)=y_i^\gamma$ for each $\gamma<\omega_1$. By rectangularity and Proposition \ref{generalknaster} we can assume that tuples $$\{(x_{k+1}^\gamma,\ldots,x_{m}^\gamma)|\; \gamma<\omega_1\},$$ as well as $$\{(y_{k+1}^\gamma,\ldots,y_{m}^\gamma)|\; \gamma<\omega_1\}$$ are pairwise alike.

		For all $k< i\neq j \le m$, and $\xi\neq \eta < \omega_1$ we have
		\begin{align*}
			x_i^\eta \; E\; x_j^\xi \iff & x_i^\eta \; E \; x_j^\eta \iff \\
			y_i^\eta \; E \; y_j^\eta \iff & y_i^\eta \; E \; y_j^\xi
		\end{align*}
		What remains is the case $i=j$. But look at the function given by $x_{k+1}^\eta \mapsto y_{k+1}^\eta$, for $\eta < \omega_1$. This is a bijection between an uncountable subgraph of $G$ and an uncountable subgraph of $H$, or equivalently, $H^c$. By our assumption, it cannot be a homomorphism into $H^c$. We conclude that there are $\eta \neq \xi <\omega_1$ such that
		$$x_{k+1}^\eta \; E \; x_{k+1}^\xi \iff y_{k+1}^\eta \; E \; y_{k+1}^\xi,$$
		and by rectangularity
		$$x_{i}^\eta \; E \; x_{i}^\xi \iff y_{i}^\eta \; E \; y_{i}^\xi,$$
		for all $i=k+1,\ldots,m$. Clearly $p_\xi \cup p_\eta \in \mathbb{P}$.
	\end{proof}
	
	\begin{cor}
		Assume $\ma_{\omega_1}$. If there exists a HSS rectangular graph of size $\omega_1$ then it is unique up to taking the complement.
	\end{cor}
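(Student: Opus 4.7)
The plan is to apply Theorem \ref{graphuniqueness} twice and then use rectangularity of $G$ together with Proposition \ref{generalknaster} to derive a contradiction from the assumption of non-uniqueness. Let $G,H$ be two HSS rectangular graphs of size $\omega_1$. Since axioms A1a--A3a assert only equalities between coloring values, and a saturating set for $H$ is automatically one for $H^c$, both rectangularity and HSS are preserved under complementation; in particular $H^c$ is again HSS rectangular. Suppose for contradiction that $G\not\simeq H$ and $G\not\simeq H^c$. Applying Theorem \ref{graphuniqueness} first to $(G,H)$ and then to $(G,H^c)$ (noting $(H^c)^c=H$), I obtain uncountable graphs $F_1$ embedding into both $G$ and $H^c$, and $F_2$ embedding into both $G$ and $H$. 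I identify $F_1,F_2$ with their images as induced uncountable subgraphs of $G$.

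The crux of the argument is to manufacture a single uncountable graph that embeds into all three of $G$, $H$, and $H^c$. I would produce uncountable $B_1\subseteq F_1$ and $B_2\subseteq F_2$ with $G|_{B_1}\simeq G|_{B_2}$. If $F_1\cap F_2$ is uncountable, take $B_1=B_2=F_1\cap F_2$. Otherwise, after deleting the countable intersection I may assume $F_1\cap F_2=\emptyset$; enumerating $F_i=\{f^\xi_i:\xi<\omega_1\}$ bijectively, the pairs $(f^\xi_1,f^\xi_2)\in G^2$ are pairwise disjoint. Since $G$ is rectangular, Proposition \ref{generalknaster} (invoking $MA_{\omega_1}$) supplies an uncountable $S\subseteq\omega_1$ on which these pairs are pairwise alike. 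Axiom A1a then yields $c(f^\xi_1,f^\eta_1)=c(f^\xi_2,f^\eta_2)$ for all $\xi\neq\eta\in S$, so $f^\xi_1\mapsto f^\xi_2$ is a graph isomorphism between $B_1=\{f^\xi_1:\xi\in S\}$ and $B_2=\{f^\xi_2:\xi\in S\}$.

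Set $F:=G|_{B_1}\simeq G|_{B_2}$. Restricting the embedding $F_2\hookrightarrow H$ to $B_2$ (after the identification $B_1\simeq B_2$) yields $F\hookrightarrow H$, and restricting $F_1\hookrightarrow H^c$ to $B_1$ yields $F\hookrightarrow H^c$. Thus $F$ is a common uncountable subgraph of $H$ and $H^c$. But applying the ``$\Rightarrow$'' direction of Theorem \ref{graphuniqueness} to the pair $(H,H)$: since $H\simeq H$ trivially, $H$ and $H^c$ cannot share an uncountable subgraph. Contradiction.

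The main obstacle I anticipate is the disjoint case handled above. The naive approach---composing the witnessing bijections $F_1\to H^c$ and $F_2\to H$ to produce an edge-reversing map inside $H$, then appealing to Proposition \ref{propx6}---only works when the two witnesses share uncountably much ground either in $G$ or in $H$, which is not automatic. Rectangularity of $G$ is exactly the tool that overcomes this: it lets one isomorphically identify uncountable pieces of $F_1$ and $F_2$ inside $G$ even when the two sets are disjoint, and this single subgraph transports into both $H$ and $H^c$, triggering the contradiction.
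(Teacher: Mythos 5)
Your proof is correct, and its overall skeleton matches the paper's: both arguments use Theorem \ref{graphuniqueness} to reduce the corollary to the claim that $G$ cannot simultaneously share an uncountable subgraph with $H$ and with $H^c$, and both derive the final contradiction from an uncountable edge-reversing $1$-$1$ function living on $H$ (equivalently, from the ``$\Rightarrow$'' direction of Theorem \ref{graphuniqueness} applied to the pair $(H,H)$). The difference lies in the middle step, and it is in your favour. The paper, having fixed uncountable $F_0,F_1\subseteq G$ with embeddings $i_0:F_0\hookrightarrow H$ and $i_1:F_1\hookrightarrow H^c$, immediately writes down ``the function $i_0(f)\mapsto i_1(f)$'' and applies Proposition \ref{propx6} to it; as literally stated this function is only defined for $f\in F_0\cap F_1$, and nothing guarantees that this intersection is uncountable. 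You identify exactly this obstacle and resolve it by first using rectangularity of $G$ (via Proposition \ref{generalknaster} and axiom A1a, or equivalently by applying Proposition \ref{propx6} inside $G$ to an arbitrary bijection $F_1\to F_2$) to extract uncountable $B_1\subseteq F_1$, $B_2\subseteq F_2$ with $G|_{B_1}\simeq G|_{B_2}$, producing a single uncountable graph embeddable into both $H$ and $H^c$. So your write-up supplies a step that the paper's proof glosses over; the only price is a slightly longer argument, plus the (correct, and worth stating) observation that $H^c$ is again HSS rectangular so that Theorem \ref{graphuniqueness} may be applied to the pair $(G,H^c)$.
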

	
	\begin{proof}
		Fix two HSS rectangular graphs $G$ and $H$, both of size $\omega_1$. It is sufficient to prove that either no uncountable graph $F$ can be embedded both into $G$ and $H$, or no uncountable graph $F$ can be embedded both into $G$ and $H^c$. Suppose towards contradiction that $F_0,F_1\subseteq G$ are uncountable subgraphs, and there exist embeddings $i_0:F_0 \hookrightarrow H$, $i_1:F_1 \hookrightarrow H^c$. The function given by $i_0(f)\mapsto i_1(f)$ is a bijection between uncountable subsets of $H$. By Proposition \ref{propx6}, on some two points it must be a homomorphism. But this contradicts the choice of $i_0$ and $i_1$.
	\end{proof}
	
	\begin{thm} \label{cliquethm}
		Assume $\ma_{\omega_1}$. If $G$ is a rectangular graph of size $\omega_1$, then either $G$ contains an uncountable clique or uncountable anticlique.
	\end{thm}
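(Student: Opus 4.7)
The plan is to deduce the dichotomy from $MA_{\omega_1}$ by forcing an uncountable clique in the complementary case. Concretely, assume $G$ contains no uncountable anticlique, and let $\mathbb{P}=\{F\in[G]^{<\omega}\mid F\text{ is a clique}\}$, ordered by reverse inclusion. This poset has cardinality $\omega_1$ since every singleton is a clique, so once it is shown to be c.c.c.\ we may apply $MA_{\omega_1}$ to the family of predense sets given by Lemma \ref{predense}; the resulting uncountable filter's union is then the desired uncountable clique.

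The content lies in verifying the c.c.c.\ property. Suppose for a contradiction that $\{F_\xi:\xi<\omega_1\}\subseteq \mathbb{P}$ is an antichain. A $\Delta$-system reduction gives a common root $R$ with $F_\xi=R\cup\{x_1^\xi,\ldots,x_n^\xi\}$ and the leaves pairwise disjoint across $\xi$; here $n\ge 1$ since otherwise all $F_\xi$ would coincide with $R$ and be compatible. Now invoke Proposition \ref{generalknaster} (whose proof needs $MA_{\omega_1}$ only through a c.c.c.\ poset of size $\omega_1$, via Lemma \ref{predense}) to thin to an uncountable subfamily in which the tuples $(x_1^\xi,\ldots,x_n^\xi)$ are pairwise alike. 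For any $\xi\neq\eta$ in this thinned family and indices $i\neq j$, axiom A3a yields $c(x_i^\xi,x_j^\eta)=c(x_i^\xi,x_j^\xi)$, and the latter is an edge because $F_\xi$ is a clique. Hence $F_\xi\cup F_\eta$ is a clique if and only if every diagonal pair $(x_i^\xi,x_i^\eta)$ is an edge; moreover, by A1a, the diagonal pairs are all edges or all non-edges simultaneously. Incompatibility of $F_\xi,F_\eta$ therefore forces $(x_1^\xi,x_1^\eta)$ to be a non-edge for \emph{every} pair in the uncountable thinned family, and since the $x_1^\xi$ are pairwise distinct by the $\Delta$-system, $\{x_1^\xi:\xi<\omega_1\}$ is an uncountable anticlique — contrary to our standing assumption.

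The main obstacle is the combinatorial bookkeeping in the compatibility analysis: one must isolate axiom A3a as the mechanism that makes all off-diagonal pairs $(x_i^\xi,x_j^\eta)$ automatically edges, and axiom A1a as the mechanism that reduces the remaining diagonal behaviour to an all-or-nothing condition which, when uniformly failing, reassembles into an uncountable anticlique. Once c.c.c.\ is in hand, the closing MA step is routine — $\mathbb{P}$ has size $\omega_1$, so Lemma \ref{predense} and $MA_{\omega_1}$ furnish an uncountable filter whose union is the promised uncountable clique.
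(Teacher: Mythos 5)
Your proposal is correct and follows essentially the same route as the paper: the same poset of finite cliques, the same $\Delta$-system plus Proposition \ref{generalknaster} thinning to pairwise alike tuples, and the same use of A3a for off-diagonal pairs, with the diagonal handled by observing that uniform failure would produce an uncountable anticlique (the paper phrases this directly rather than by contradiction, but the argument is identical). The only cosmetic difference is that the paper also remarks, via Proposition \ref{propx6}, that $G$ cannot contain both a clique and an anticlique, which is not needed for the stated conclusion.
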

	\begin{proof}
		In the light of Proposition \ref{propx6}, $G$ clearly can't contain both. Assume that $G$ doesn't contain an uncountable anticlique. We can represent $G$ as $(\omega_1,E)$, and consider the partial order 
		$$\mathbb{P}=\{F\subseteq \omega_1|\; F\text{ is a finite clique in }G \},$$
		ordered by reversed inclusion. If we can show that $\mathbb{P}$ is c.c.c, Lemma \ref{predense} will provide us with an uncountable clique in $G$. Suppose that $\{F_\xi|\; \xi<\omega_1 \}$ is an uncountable subset of $\mathbb{P}$. We can write
		$$F_\xi=(f_1,\ldots,f_k,f_{k+1}^\xi,\ldots,f_m^\xi),$$
		where tuples $(f_{k+1}^\xi,\ldots,f_m^\xi)$ are pairwise disjoint. By the virtue of Martin's Axiom, and Proposition \ref{generalknaster}, we can also assume that they are pairwise alike. By our assumption the set $\{f_{k+1}^\xi|\;\xi<\omega_1 \}$ is not an anticlique, so we will find $\xi\neq \eta <\omega_1$, such that $f_{k+1}^\eta \; E \; f_{k+1}^\xi$. It is now standard to check that $F_\eta \cup F_\xi \in \mathbb{P}$.
	\end{proof}

	\subsection{Separable $\omega_1$-dense Linear Orders}
	
	What do rectangular linear orders look like? After unwinding the axioms for the $\circledast$ relation, we see that
	$(x_1,\ldots,x_n)\circledast (y_1,\ldots,y_n)$ translates to the following three axioms:
	
	\begin{itemize}
		\item[A1c]	$\forall \; i,j=1,\ldots,n \; (x_i<y_i\iff x_j<y_j)$
		\item[A2c]	$\forall \; i,j=1,\ldots,n \; (x_i<x_j \iff y_i<y_j)$
		\item[A3c]	$\forall \; i,j=1,\ldots,n \; ((x_i\neq x_j)\implies (x_i<x_j\iff x_i<y_j\iff y_i<x_j))$
	\end{itemize}
	
	If we omitted A3c, we would obtain what the authors of \cite{ars} call an \emph{increasing order} (actually, one can show that in the class of separable, dense linear orders these two notions coincide). An order added by $\operatorname{Fn}(\omega_1,\mathcal{LO},\omega)$ is a rectangular, separable, $\omega_1$-dense linear order, which under $\ma_{\omega_1}$ is also homogeneous. $\ma_{\omega_1}$ imposes a great deal of regularity on the class of separable, homogeneous $\omega_1$-dense linear orders -- for example two such orderings are isomorphic precisely when they are bi-embeddable. Implications of $\ma_{\omega_1}$ for this class, as well as other axioms like $\oca_{ARS}$, have been extensively studied in \cite{ars}. Not surprisingly, many properties of rectangular linear orders resemble those of graphs.
	
	\begin{thm}\label{ihaveenough}
		Assume $\ma_{\omega_1}$, and suppose that $L$ is a separable, $\omega_1$-dense, rectangular linear order. If $K$ is any other separable, $\omega_1$-dense, rectangular linear order, then $K\simeq L$ if and only if $K$ and $L^*$ do not contain a common uncountable suborder ($L^*$ denotes $L$ with the reversed ordering).
	\end{thm}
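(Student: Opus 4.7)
The plan is to follow Theorem~\ref{graphuniqueness} almost verbatim, swapping the graph edge relation for the strict order and the complement $H^c$ for the reverse order $L^*$. The forward direction is immediate: if $K\simeq L$ and some uncountable order $F$ embeds into both $K$ and $L^*$, then (using $K\simeq L$) $F$ also embeds into $L$, and composing an embedding $i:F\hookrightarrow L$ with (the inverse of) an embedding $j:F\hookrightarrow L^*$ yields an uncountable $1$-$1$ partial function on $L$ which Proposition~\ref{propx6} forces to be order-preserving on some pair of distinct points -- but which is strictly order-reversing everywhere by construction, a contradiction.

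For the converse, assume $K$ and $L^*$ share no uncountable common suborder. I would decompose $K=\bigsqcup_{\alpha<\omega_1}K_\alpha$ and $L=\bigsqcup_{\alpha<\omega_1}L_\alpha$ into countable saturating subsets, and apply $MA_{\omega_1}$ to
$$\mathbb{P}=\{p : p\text{ is a finite order-isomorphism }K\to L\text{ with }p(K_\alpha)\subseteq L_\alpha\text{ for all }\alpha\},$$
ordered by reverse inclusion, against the standard dense sets $D_k=\{p:k\in\dom p\}$ and $E_l=\{p:l\in\rg p\}$. A generic filter glues to the required isomorphism, so the whole content of the argument lies in verifying that $\mathbb{P}$ is c.c.c.

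For the c.c.c.\ check, I would apply the $\Delta$-system lemma together with Proposition~\ref{generalknaster} (once inside the rectangular $K$, once inside the rectangular $L$) to reduce to an uncountable $\{p_\gamma\}$ with $\dom p_\gamma=(x_1,\ldots,x_k,x_{k+1}^\gamma,\ldots,x_m^\gamma)$, $\rg p_\gamma=(y_1,\ldots,y_k,y_{k+1}^\gamma,\ldots,y_m^\gamma)$, pairwise disjoint tails, and both the $x$- and $y$-tails pairwise alike. Axioms A2c and A3c then pin down each cross-comparison $x_i^\eta$ vs.\ $x_j^\xi$ (for $i\neq j$) in terms of internal comparisons, and likewise on the $y$-side, so compatibility of $p_\xi$ and $p_\eta$ reduces to the diagonal question: do there exist $\xi\neq\eta$ with $x_{k+1}^\xi<x_{k+1}^\eta \iff y_{k+1}^\xi<y_{k+1}^\eta$? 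This diagonal step is the main obstacle, because unlike in Theorem~\ref{graphuniqueness} the bijection $x_{k+1}^\gamma\mapsto y_{k+1}^\gamma$ crosses from $K$ to $L$ and so Proposition~\ref{propx6} is unavailable. Instead I would argue directly from the hypothesis: were this bijection order-reversing on every pair, it would be an order-preserving embedding of the uncountable set $\{x_{k+1}^\gamma\}_{\gamma<\omega_1}\subseteq K$ into $L^*$, giving a common uncountable suborder of $K$ and $L^*$ and contradicting the assumption. Hence suitable $\xi,\eta$ exist; A1c for alike tuples propagates the direction to all indices $i>k$, and $p_\xi\cup p_\eta\in\mathbb{P}$ as required.
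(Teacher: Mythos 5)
Your proposal is correct and follows essentially the same route as the paper: the forward direction via Proposition~\ref{propx6}, and the converse via $MA_{\omega_1}$ applied to the poset of finite level-preserving partial isomorphisms, with the c.c.c.\ check splitting into the off-diagonal comparisons (handled by the alike axioms) and the diagonal case (handled by the no-common-uncountable-suborder hypothesis applied to $x_{k+1}^\gamma\mapsto y_{k+1}^\gamma$, with A1c propagating the direction to the remaining indices). Your only inaccuracy is the aside that Theorem~\ref{graphuniqueness} uses Proposition~\ref{propx6} at the diagonal step --- in fact it argues from the hypothesis exactly as you do --- but this does not affect your argument.
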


	\begin{proof} $\text{  }$\\
		
		"$\Rightarrow$." We must show that there is no uncountable strictly decreasing function $f\subseteq L \times L$. But this follows directly from Proposition \ref{propx6}. \\
		
		"$\Leftarrow$." Assume that there is no uncountable linear order which embeds both into $L$ and $L^*$. We proceed like in the proofs of Theorems \ref{mimimalmetr} and \ref{graphuniqueness}. We can decompose $L$ and $K$ into disjoint unions of countable dense subsets $\{L_\alpha\}_{\alpha<\omega_1}$, and $\{K_\alpha\}_{\alpha<\omega_1}$ respectively. Let $\mathbb{P}$ consist of finite partial isomorphisms between $L$ and $K$ which map elements from $K_\alpha$ to $L_\alpha$, for all $\alpha<\omega_1$. The following sets are dense for $l\in L$, $k\in K$:
		$$D_k=\{p \in \mathbb{P}|\; k \in \dom{p} \},$$
		$$E_l=\{p \in \mathbb{P}|\; l \in \rg{p} \}.$$
		
		We verify the c.c.c. property. Fix any uncountable subset $\{p_\gamma|\;\gamma<\omega_1 \}\subseteq \mathbb{P}$. Using $\Delta$-system Lemma, we can write
		$$\dom{p_\gamma}=(x_1,\ldots,x_k,x_{k+1}^\gamma,\ldots,x_{m}^\gamma),$$
		$$\rg{p_\gamma}=(y_1,\ldots,y_k,y_{k+1}^\gamma,\ldots,y_{m}^\gamma),$$	
		where tuples $(x_{k+1}^\gamma,\ldots,x_{m}^\gamma)$ are pairwise disjoint, and moreover $p_\gamma(x_i)=y_i$, and $p_\gamma(x_i^\gamma)=y_i^\gamma$, for each $\gamma<\omega_1$. By rectangularity and Proposition \ref{generalknaster} we can assume, that tuples $$\{(x_{k+1}^\gamma,\ldots,x_{m}^\gamma)|\; \gamma<\omega_1\},$$ as well as $$\{(y_{k+1}^\gamma,\ldots,y_{m}^\gamma)|\; \gamma<\omega_1\},$$ are pairwise alike.

		For all $k< i\neq j \le m$, and $\xi\neq \eta < \omega_1$ we have
		\begin{align*}
			x_i^\eta < x_j^\xi \iff & x_i^\eta < x_j^\eta \iff \\
			y_i^\eta < y_j^\eta \iff & y_i^\eta < y_j^\xi
		\end{align*}
		What remains is the case $i=j$. Look at the function given by $x_{k+1}^\eta \mapsto y_{k+1}^\eta$, for $\eta < \omega_1$. This is a bijection between an uncountable subset of $L$ and an uncountable subset of $K$. By our assumption it cannot be decreasing, so there are $\eta \neq \xi <\omega_1$ such that
		$$x_{k+1}^\eta < x_{k+1}^\xi \iff y_{k+1}^\eta < y_{k+1}^\xi,$$
		and by rectangularity
		$$x_{i}^\eta \; < \; x_{i}^\xi \iff y_{i}^\eta \; < \; y_{i}^\xi,$$
		for all $i=k+1,\ldots,m$. Clearly $p_\xi \cup p_\eta \in \mathbb{P}$.	
	\end{proof}
	
	Just like in the case of graphs, one can easily prove
	
	\begin{cor}
		Assume $\ma_{\omega_1}$. If there exists a rectangular, separable, $\omega_1$-dense linear order then it is unique up to reversing the order.
	\end{cor}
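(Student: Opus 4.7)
The plan is to derive the corollary from Theorem \ref{ihaveenough} and Proposition \ref{propx6} by a short ping-pong argument. I suppose for contradiction that $L$ and $K$ are two rectangular, separable, $\omega_1$-dense linear orders with $L\not\simeq K$ and $L\not\simeq K^*$, and aim to manufacture an uncountable, strictly decreasing 1-1 partial function on $K$, in direct contradiction with Proposition \ref{propx6}.

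Theorem \ref{ihaveenough} is invoked twice, using $L^{**}=L$ and the symmetry between $K$ and $K^*$ with respect to being rectangular, separable, and $\omega_1$-dense. The hypothesis $L\not\simeq K$ produces a common uncountable suborder of $L$ and $K^*$: uncountable $A_L\subseteq L$, $A_K\subseteq K$, together with an order-reversing bijection $\phi:A_L\to A_K$. The hypothesis $L\not\simeq K^*$, rephrased as $K\not\simeq L^*$ and then processed through Theorem \ref{ihaveenough} applied to $K$ and $L^*$, gives a common uncountable suborder of $L$ and $K$: uncountable $B_L\subseteq L$, $B_K\subseteq K$, and an order-preserving bijection $\psi:B_L\to B_K$.

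The one delicate point is that $A_L$ and $B_L$ need not overlap in an uncountable set, so we cannot directly transport $\phi^{-1}$ from $A_K$ through $A_L$ into $B_K$ via $\psi$. This is precisely what Proposition \ref{propx6} inside $L$ is designed to fix: pick any bijection $\tau:A_L\to B_L$ and regard it as an uncountable 1-1 subset of $L\times L$; Proposition \ref{propx6} then supplies an uncountable $C\subseteq A_L$ on which $\tau\restriction C$ is order-preserving.

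Finally, define $\sigma := \psi\circ(\tau\restriction C)\circ(\phi\restriction C)^{-1}$, with domain $\phi(C)\subseteq K$ and image $\psi(\tau(C))\subseteq K$. It is a bijection between two uncountable subsets of $K$, and its three factors are order-reversing, order-preserving, and order-preserving respectively, so $\sigma$ is strictly decreasing on its entire domain. In particular $\sigma$ is order-preserving on no pair of distinct points, contradicting Proposition \ref{propx6} applied to the rectangular order $K$. Hence one of $L\simeq K$ or $L\simeq K^*$ must hold, which is the desired conclusion.
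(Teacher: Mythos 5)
Your argument is correct and follows essentially the same route the paper takes for the analogous graph corollary (which it simply transfers to linear orders): use Theorem \ref{ihaveenough} twice to extract two common uncountable suborders of opposite orientation, compose them into a strictly decreasing uncountable 1-1 function on $K$, and contradict Proposition \ref{propx6}. Your extra application of Proposition \ref{propx6} inside $L$ to align the domains $A_L$ and $B_L$ before composing is a detail the paper's graph-case proof glosses over, and it is a worthwhile addition rather than a deviation.
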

	
	These orders are also mimimal in the same sense as metric spaces from Theorem \ref{mimimalmetr}. The reader will have no difficulty in adjusting its proof to obtain
	
	\begin{thm}
		Assume $\ma_{\omega_1}$ and let $K$, $L$ be a pair of separable, $\omega_1$-dense linear orders. If $K$ embeds into $L$, and $L$ is rectangular, then $L$ and $K$ are isomorphic.
	\end{thm}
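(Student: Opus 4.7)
The plan is to mimic the proof of Theorem \ref{mimimalmetr}, working entirely inside $L$ and using the linear-order alikeness axioms A1c--A3c in place of the metric A1--A3. First I identify $K$ with its image under the embedding so that $K\subseteq L$; producing an isomorphism $L\to K$ (onto the image) will then prove the theorem. The order $K$ is automatically rectangular: any $\omega_1$-sequence of pairwise disjoint $n$-tuples in $K$ is also such a sequence in $L$, and rectangularity of $L$ provides the required pair. Both $L$ and $K$ are separable and $\omega_1$-dense, so each can be partitioned into $\omega_1$ pairwise disjoint countable dense subsets by a standard transfinite construction; fix decompositions $L=\bigsqcup_{\alpha<\omega_1}L_\alpha$ and $K=\bigsqcup_{\alpha<\omega_1}K_\alpha$.

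The natural forcing is
\[
\mathbb{P}=\{p\subseteq L\times K : p\text{ is a finite partial order isomorphism with }p(L_\alpha)\subseteq K_\alpha\text{ for all }\alpha\},
\]
ordered by reverse inclusion. For each $l\in L$ the set $D_l=\{p:l\in\dom p\}$ is dense: given $p\in\mathbb{P}$ with $l\in L_\alpha$, the order type of $l$ over $\dom p$ singles out an open interval in $K$ with endpoints in $\rg p\cup\{\pm\infty\}$, and by density of $K_\alpha$ this interval contains some $k\in K_\alpha$. A symmetric argument shows $E_k=\{p:k\in\rg p\}$ is dense for each $k\in K$. Once c.c.c.\ is checked, $MA_{\omega_1}$ applied to the family $\{D_l,E_k\}_{l\in L,\,k\in K}$ yields a generic filter whose union is an isomorphism $L\to K$.

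The c.c.c.\ verification is where the embedding $K\subseteq L$ does the real work. Given $\{p_\gamma:\gamma<\omega_1\}\subseteq\mathbb{P}$, trim via the $\Delta$-system lemma to
\[
\dom p_\gamma=\{x_1,\ldots,x_k,x_{k+1}^\gamma,\ldots,x_m^\gamma\},\qquad \rg p_\gamma=\{y_1,\ldots,y_k,y_{k+1}^\gamma,\ldots,y_m^\gamma\},
\]
with pairwise disjoint tails, $p_\gamma(x_i)=y_i$, and $p_\gamma(x_i^\gamma)=y_i^\gamma$. Because $K\subseteq L$, all entries live in $L$, so the concatenated tuples $\overline{z}^\gamma=(x_{k+1}^\gamma,\ldots,x_m^\gamma,y_{k+1}^\gamma,\ldots,y_m^\gamma)\in L^{2(m-k)}$ are pairwise disjoint in $L$, and rectangularity of $L$ yields $\xi\neq\eta$ with $\overline{z}^\xi\circledast\overline{z}^\eta$. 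For $k<i\neq j\leq m$, axiom A3c followed by $p_\xi$ being an isomorphism gives $x_i^\xi<x_j^\eta \iff x_i^\xi<x_j^\xi \iff y_i^\xi<y_j^\xi \iff y_i^\xi<y_j^\eta$, exactly as in the proof of Theorem \ref{ihaveenough}.

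The diagonal case $i=j$, which in Theorem \ref{ihaveenough} required an extra hypothesis and an appeal to Proposition \ref{propx6}, is here settled for free: A1c applied to $\overline{z}^\xi\circledast\overline{z}^\eta$ forces the comparison between matching entries to be position-independent, so reading it in the positions corresponding to $x_{k+i}$ and $y_{k+i}$ yields $x_i^\xi<x_i^\eta \iff y_i^\xi<y_i^\eta$ for every $k<i\leq m$. Combined with the $i\neq j$ analysis this shows $p_\xi\cup p_\eta\in\mathbb{P}$, finishing the c.c.c.\ argument. The main benefit of the embedding hypothesis is precisely this: it collapses the whole proof into the single rectangular order $L$, so that A1c on the concatenated tuple replaces the appeal to Proposition \ref{propx6} used for two a priori unrelated orders in Theorem \ref{ihaveenough}.
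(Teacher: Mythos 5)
Your proof is correct and is exactly the argument the paper intends: it explicitly defers this theorem to ``adjusting'' the proof of Theorem \ref{mimimalmetr}, and your adjustment --- identifying $K$ with its image in $L$, forcing with finite partial isomorphisms respecting matched countable dense pieces, and using rectangularity of $L$ on the concatenated domain--range tuples so that A3c handles $i\neq j$ and A1c settles the diagonal case --- is precisely that adaptation. Your closing remark correctly pinpoints why the embedding hypothesis removes the appeal to Proposition \ref{propx6} that was needed in Theorem \ref{ihaveenough}.
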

	
	\section{More About Rectangularity}

	We will be considering models of the form $\chi=(\omega_1,c)$, where $c:\omega_1 \times \omega_1 \rightarrow K$ for some, usually finite, set $K$. A set $A \subseteq \omega_1$ is called \emph{monochromatic} if 
	$$c\restriction A\times A \setminus \{(a,a)|\; a \in A\}$$ is constant. A set $A\subseteq \omega_1$ \emph{hits} a color $k$, if there are points $a_0\neq a_1 \in A$ for which $c(a_0,a_1)=k$. If $c$ will be symmetric, we will regard it as a function with the domain $[\omega_1]^{\le2}\setminus \{\emptyset\}$. By a \emph{rectangle} we understand a set of the form $I\times J$, for $I,J\subseteq \omega_1$.

	\begin{thm} \label{countableunion}
		Assume $\ma_{\omega_1}$. If $\chi$ is a rectangular model in a language consisting of finitely many symmetric binary relations, then $\chi$ is a countable union of monochromatic sets.
	\end{thm}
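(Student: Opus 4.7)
The plan is to proceed by induction on $|K|$, the base case $|K|\leq 1$ being trivial. For the inductive step with $|K|=m$, the natural tool is, for each $k\in K$, the poset
$$\mathbb{P}_k=\{p:F\to\omega\mid F\in[\omega_1]^{<\omega},\text{ each }p^{-1}(n)\text{ is $k$-monochromatic}\},$$
ordered by reverse extension. The sets $D_x=\{p\mid x\in\dom p\}$ are dense (extend $p$ by sending $x$ to a fresh label, producing a singleton fiber), so if $\mathbb{P}_k$ is c.c.c.\ then $MA_{\omega_1}$ yields a generic function $f_k:\omega_1\to\omega$ with $k$-monochromatic fibers, giving the desired countable monochromatic cover.

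The bulk of the work is the c.c.c.\ analysis of $\mathbb{P}_k$. Given an uncountable family $\{p_\xi\}\subseteq\mathbb{P}_k$, apply the $\Delta$-system Lemma and pigeonhole to fix a common root $R$, a common value function on $R$, and an enumeration $(x_1^\xi,\dots,x_s^\xi)$ of $\dom p_\xi\setminus R$ with $p_\xi(x_i^\xi)=n_i$ independent of $\xi$. By Proposition \ref{generalknaster}, further refine so that the tuples $(x_1^\xi,\dots,x_s^\xi)$ are pairwise alike. Axioms A2a and A3a then ensure that for $i\neq j$ with $n_i=n_j$, the cross values $c(x_i^\xi,x_j^\eta)$ equal the inner values $c(x_i^\xi,x_j^\xi)$, which are $k$ by validity of $p_\xi$; axiom A1a forces the diagonal $c(x_i^\xi,x_i^\eta)$ to take a common value $k^*_{\xi\eta}$ independent of $i$. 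Thus $p_\xi$ and $p_\eta$ are compatible exactly when $k^*_{\xi\eta}=k$. If no pair of alike tuples achieves this, then for every $\xi\neq\eta$ we have $c(x_1^\xi,x_1^\eta)\neq k$, so $\{x_1^\xi\mid\xi<\omega_1\}$ is an uncountable set on which $c$ omits the color $k$.

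This dichotomy drives the induction. If some color $k\in K$ has the property that no uncountable $A\subseteq\omega_1$ satisfies $k\notin c([A]^2)$, then the preceding analysis shows $\mathbb{P}_k$ is c.c.c., and $MA_{\omega_1}$ completes the proof. Otherwise, for every $k\in K$ there is an uncountable $A_k\subseteq\omega_1$ missing color $k$; restricted to $A_k$ the coloring uses at most $|K|-1$ colors, so the inductive hypothesis provides a countable monochromatic cover of $A_k$. To assemble a cover of all of $\omega_1$, I take (by Zorn's Lemma) a maximal family $\mathcal{F}$ of pairwise disjoint uncountable subsets each missing some color; each member is covered countably by induction, and maximality ensures that the residual $B=\omega_1\setminus\bigcup\mathcal{F}$ has no uncountable missing subset, hence falls into the first case and is itself covered. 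The main obstacle is showing $|\mathcal{F}|\leq\omega$: if $\mathcal{F}$ were of size $\omega_1$, pigeonholing on the missed color gives uncountably many disjoint $A_\alpha$ all missing a fixed $k^*$, and applying Proposition \ref{generalknaster} to representative pairs $(a_\alpha,b_\alpha)\in A_\alpha^2$ produces an uncountable monochromatic transversal whose existence should allow a reshuffling of $\mathcal{F}$ (or a direct forcing argument over representatives) to reduce its size — making this bound precise is the delicate point that the proof must finally address.
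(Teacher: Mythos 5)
There is a genuine gap, and it sits exactly where you flag it: the bound $|\mathcal{F}|\le\omega$ in your Zorn's Lemma assembly. Nothing in your setup prevents uncountably many pairwise disjoint uncountable sets each omitting some color (pigeonholing to a fixed omitted color $k^*$ does not help, since the union of the $A_\alpha$ need not omit $k^*$ --- the color can reappear between distinct pieces --- and an alike transversal gives no evident way to merge or discard pieces). Your first two paragraphs are essentially correct and match the paper's ``standard'' c.c.c.\ computation: $\Delta$-system, refine to pairwise alike tuples via Proposition \ref{generalknaster}, observe that A2a/A3a force all off-diagonal cross values to agree with the inner values, so that compatibility of $p_\xi,p_\eta$ reduces to the single diagonal value $c(x_1^\xi,x_1^\eta)$ being $k$; hence either $\mathbb{P}_k$ is c.c.c.\ or some uncountable set omits $k$. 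The problem is only with how you exploit the second horn of this dichotomy.

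The missing idea is to run the induction on a stronger statement than the theorem itself, namely: \emph{there is exactly one color hit on every uncountable subset} (the paper's Lemma \ref{fixcolor}). If some uncountable $X$ omits $k_0$, then $X$ is itself rectangular, so the induction hypothesis (applied to the at most $|K|-1$ colors appearing on $X$) yields a color $k_1$ hit on every uncountable subset of $X$. The key step you are missing is the transfer of $k_1$ back to all of $\omega_1$: given any uncountable $A\subseteq\omega_1$, Proposition \ref{propx6} applied to a bijection $A\to X$ produces an uncountable $A'\subseteq A$ on which the bijection preserves $c$; since the image hits $k_1$, so does $A'$, hence $A$. Thus the second horn of your dichotomy collapses into the first (with $k_1$ in place of $k$), a single forcing $\mathbb{P}_{k_1}$ is c.c.c., and one application of $MA_{\omega_1}$ covers all of $\omega_1$ --- no decomposition into pieces is needed, and the unbounded-family obstruction never arises.
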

	
	\begin{lem} \label{fixcolor}
		Under the hypothesis of Theorem \ref{countableunion}, there exists exactly one color $k \in K$ that is hit on every uncountable subset of $\omega_1$.
	\end{lem}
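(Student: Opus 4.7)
I would split the statement into \emph{uniqueness} (at most one color is hit on every uncountable subset of $\omega_1$) and \emph{existence} (at least one such color exists). Both halves will exploit that rectangularity is preserved when one passes to the graph reduct obtained by singling out a single color: if two tuples are alike with respect to $c$, they are alike with respect to any binary reduct of $c$.

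For uniqueness, suppose $k_1 \neq k_2$ are both unavoidable on $\omega_1$. Consider the graph $G_{k_1}=(\omega_1,E)$ with $E(\alpha,\beta) \iff c(\alpha,\beta)=k_1$. As noted, $G_{k_1}$ inherits rectangularity from $\chi$, so by Theorem \ref{cliquethm} it contains either an uncountable clique or an uncountable anticlique. The clique would be monochromatic in $k_1$ and hence miss $k_2$, contradicting $k_2 \in U$. The anticlique would be an uncountable set that avoids $k_1$, contradicting $k_1 \in U$.

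For existence, I would first extract an uncountable monochromatic subset of $\chi$ by iterative descent: starting from $\omega_1$, as long as the current uncountable working set contains an uncountable subset missing some color, replace it with that subset. Finiteness of $K$ forces termination at an uncountable rectangular $A \subseteq \omega_1$ in which every color actually occurring is already unavoidable in $A$; applying the uniqueness half inside the rectangular structure $A$ then shows that exactly one color $k^*$ occurs in $A$, so $A$ is monochromatic in $k^*$. To promote this to a global statement, let $B \subseteq \omega_1$ be any uncountable set. Running the same descent inside $B$ yields an uncountable monochromatic $B^* \subseteq B$ in some color $k'$. After discarding a countable overlap if necessary, pick pairwise disjoint pairs $(x_\xi,y_\xi) \in A \times B^*$ for $\xi<\omega_1$ and apply rectangularity of $\chi$ at $n=2$: alikeness of two such pairs $(x_\xi,y_\xi) \circledast (x_\eta,y_\eta)$ forces via axiom A1a that $k^* = c(x_\xi,x_\eta) = c(y_\xi,y_\eta) = k'$. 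Hence $B$ hits $k^*$, and $k^*$ is unavoidable.

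The delicate step is this final transfer from monochromaticity inside one particular $A$ to unavoidability in all of $\omega_1$. The point is that rectangularity at $n=2$, together with axiom A1a, is rigid enough to force the colors of any two uncountable monochromatic subsets of $\omega_1$ to coincide; that coincidence is exactly what makes the existence argument go through. Everything else is either finite combinatorics (the descent only uses that $|K|<\omega$) or a direct appeal to Theorem \ref{cliquethm}.
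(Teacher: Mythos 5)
Your argument is correct, but it reaches the conclusion by a genuinely different route than the paper. The paper proceeds by induction on $|K|$: either $k_0$ is unavoidable, or one passes to an uncountable $X$ omitting $k_0$ and applies the induction hypothesis inside $X$ to find a color $k_1$ unavoidable there; the transfer of unavoidability from $X$ to an arbitrary uncountable $A\subseteq\omega_1$ is done via Proposition \ref{propx6} (a bijection $A\to X$ preserves $c$ on an uncountable set), and uniqueness is obtained by extracting an uncountable $k_1$-monochromatic set with Martin's Axiom. Your descent over the finite color set is essentially the same use of $|K|<\omega$ as the induction, and your uniqueness step (the clique/anticlique dichotomy for the reduct $G_{k_1}$, which indeed inherits rectangularity because alikeness is a system of equalities of colors and so survives post-composing $c$ with any map) amounts to the same extraction of a monochromatic set, just quoted from Theorem \ref{cliquethm} rather than redone. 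The genuine difference is your transfer step: instead of Proposition \ref{propx6} you apply rectangularity directly at $n=2$ together with A1a to force any two disjoint uncountable monochromatic sets to carry the same color. This is slicker and uses only rectangularity at that point (no appeal to $MA_{\omega_1}$ there), although the proof as a whole still needs $MA_{\omega_1}$ through Theorem \ref{cliquethm}. Two small points to tidy up: the loop condition in the descent should read ``contains an uncountable subset missing some color that the current set hits,'' or else the loop never terminates; and if $A\cap B^*$ is uncountable you cannot discard the overlap, but in that case the two colors coincide immediately, so nothing is lost.
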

	
	\begin{proof}
		We proceed by induction on $|K|$. For $|K|=1$ the conclusion is obvious, so let us assume that $|K|>1$, and fix any $k_0 \in K$. First, we show that at least one such color exists. If every uncountable set $X \subseteq \omega_1$ hits $k_0$, we are done. Otherwise, there exists an uncountable set $X$, for which we have
		$$c \restriction [X]^2: \; [X]^2  \rightarrow K \setminus \{k_0\}.$$
		Notice that any uncountable subset of a rectangular model is rectangular itself. Therefore, by the induction hypothesis, there exists some color $k_1 \in K \setminus  \{k_0\}$, that is hit on every uncountable subset of $X$. We will show that $k_1$ is also hit on every uncountable subset of $\omega_1$. Indeed, fix an uncountable set $A \subseteq \omega_1$. By Proposition \ref{propx6}, any bijection $\phi:A \hookrightarrow X$ preserves the coloring $c$ on some uncountable set $A' \subseteq A$. But we know that $\phi[A']$ hits $k_1$, and so does $A'$. By applying Martin's Axiom to the set of finite $k_1$-monochromatic sets, we see that $X$ contains an uncountable $k_1$-monochromatic set (see the proof of Theorem \ref{cliquethm}). This shows that the conclusion cannot hold for any color other than $k_1$.
	\end{proof}
	
	\begin{proof}[Proof of Theorem \ref{countableunion}]
		
		Let us fix a color $k\in K$ that is hit by $c$ on every uncountable subset of $\omega_1$. We will apply Martin's Axiom to the forcing
		$$ \{f: \dom{f} \rightarrow [\omega_1]^{<\omega}|\; \dom{f} \in [\omega]^{<\omega}, \; \forall n \in \dom{f}\; f(n) \text{ is monochromatic of the color $k$}\}.$$
		The proof of the c.c.c. is standard.
	\end{proof}
	
	Recall that a graph $G$ is \emph{countably chromatic} if it can be decomposed into a countable union of anticliques, i.e. there exist a function $h:G\rightarrow \omega$, such that any two connected vertices are mapped into different values. We say that $G$ is \emph{co-countably chromatic} if $G^c$ is countably chromatic.
	
	\begin{cor}
		Assume $\ma_{\omega_1}$. If $G$ is a rectangular graph of size $\omega_1$, then either $G$ or $G^c$ is countably chromatic.
	\end{cor}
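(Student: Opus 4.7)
The plan is to apply Theorem \ref{countableunion} directly to $G$, treating it as a rectangular model in the language with one symmetric binary relation, namely the edge relation $E$. Under this reading the color set is $K=\{0,1\}$, where $c(x,y)=1$ iff $x$ and $y$ are adjacent, and a monochromatic subset of $G$ is either a clique (all pairs colored $1$) or an anticlique (all pairs colored $0$).

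First I would invoke Lemma \ref{fixcolor} to obtain the unique color $k\in\{0,1\}$ that is hit on every uncountable subset of $G$. Then Theorem \ref{countableunion}, whose proof isolates exactly this distinguished $k$ and applies $MA_{\omega_1}$ to the poset of finite $k$-monochromatic sets, yields a decomposition of the vertex set into countably many $k$-monochromatic pieces. The final step is a translation: if $k=0$ each piece is an anticlique in $G$, so $G$ itself is countably chromatic by definition; if $k=1$ each piece is a clique in $G$, hence an anticlique in $G^c$, so $G^c$ is countably chromatic.

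Since the substantive work has already been carried out in Theorem \ref{countableunion} and Lemma \ref{fixcolor}, there is no real obstacle here; the corollary is essentially the specialization of the theorem to the two-color case, combined with the observation that a countable chromatic decomposition is by definition a countable union of anticliques. The only sanity check is that the language of graphs matches the hypothesis of Theorem \ref{countableunion}, i.e.\ finitely many symmetric binary relations, which it does with the single relation $E$.
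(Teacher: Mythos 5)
Your argument is exactly the paper's intended one: the corollary is the two-color specialization of Theorem \ref{countableunion}, and you correctly note the needed strengthening visible in its proof (all pieces are monochromatic of the \emph{same} distinguished color $k$ from Lemma \ref{fixcolor}), after which the translation clique/anticlique versus $G$/$G^c$ is immediate. No gaps.
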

	
	The next theorem holds in a broader generality, not only for symmetric relations.
	\begin{thm} \label{countablerecta}
		Assume $\ma_{\omega_1}$. If $\chi=(\omega_1,c)$ is a rectangular model, then $c$ is a countable union of rectangles.
	\end{thm}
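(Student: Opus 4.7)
The plan is to apply $MA_{\omega_1}$ to a forcing that directly builds a countable family of monochromatic rectangles covering $\omega_1 \times \omega_1$. The ccc proof will use rectangularity and Proposition \ref{generalknaster}, in much the same spirit as the proof of Theorem \ref{countableunion}.

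Let $\mathbb{P}$ consist of finite partial functions $p$ with $\dom{p} \in [\omega]^{<\omega}$ such that for each $n \in \dom{p}$, $p(n) = (I_n^p, J_n^p) \in [\omega_1]^{<\omega} \times [\omega_1]^{<\omega}$ and $c$ is constant on $I_n^p \times J_n^p$ with some value $k_n^p \in K$. Order $\mathbb{P}$ by $q \le p$ iff $\dom{q} \supseteq \dom{p}$ and for every $n \in \dom{p}$, $I_n^q \supseteq I_n^p$ and $J_n^q \supseteq J_n^p$ (so necessarily $k_n^q = k_n^p$). For each $(\alpha,\beta) \in \omega_1^2$ the set $D_{(\alpha,\beta)} = \{p : \exists n \in \dom{p}\text{ with } \alpha \in I_n^p \text{ and } \beta \in J_n^p\}$ is dense, since one can always append a new slot with value $(\{\alpha\},\{\beta\})$. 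Once $\mathbb{P}$ is shown to be ccc, applying $MA_{\omega_1}$ to these $\omega_1$ many dense sets produces a filter $G$ whose directed unions yield, for each $n < \omega$, a monochromatic rectangle $(I_n, J_n)$ with $\bigcup_{n<\omega} I_n \times J_n = \omega_1 \times \omega_1$, which gives the theorem.

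The bulk of the work is in verifying ccc. Given $\{p_\xi : \xi < \omega_1\} \subseteq \mathbb{P}$, I would first pass to an uncountable subfamily where the following stabilize: a common domain $\{1,\ldots,m\}$; a common color $k_n$ in each slot $n$; a $\Delta$-system root $I_n^*$ for $(I_n^{p_\xi})_\xi$ and $J_n^*$ for $(J_n^{p_\xi})_\xi$ in each slot. After one further $\Delta$-system step applied to the global "variable parts" $S^\xi := \bigcup_{n \le m}\bigl((I_n^{p_\xi}\setminus I_n^*)\cup(J_n^{p_\xi}\setminus J_n^*)\bigr)$, combined with a pigeonhole argument that absorbs any common elements back into the appropriate slot-roots, I can assume the $S^\xi$ are genuinely pairwise disjoint. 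Fixing a uniform enumeration $\overline{z}^\xi$ of $S^\xi$, Proposition \ref{generalknaster} lets me refine once more so that $\overline{z}^\xi \circledast \overline{z}^\eta$ for all distinct $\xi,\eta$ in the family.

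It then remains to show that for any $\xi \ne \eta$ from this refined family, $p_\xi \cup p_\eta$ is a condition, i.e., $(I_n^{p_\xi}\cup I_n^{p_\eta})\times(J_n^{p_\xi}\cup J_n^{p_\eta})$ is $k_n$-monochromatic for every $n$. Every pair $(a,b)$ in which one coordinate lies in the root $I_n^*$ or $J_n^*$ in fact belongs to $I_n^{p_\xi}\times J_n^{p_\xi}$ or to $I_n^{p_\eta}\times J_n^{p_\eta}$, and so is already colored $k_n$. The only genuinely "mixed" case is $a \in I_n^{p_\xi}\setminus I_n^*$, $b \in J_n^{p_\eta}\setminus J_n^*$, together with its mirror. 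Here disjointness of $S^\xi$ and $S^\eta$ forces $a \ne b$, so axiom A3a applied at the positions of $a$ and of the $\xi$-counterpart $b' \in J_n^{p_\xi}\setminus J_n^*$ of $b$ inside $\overline{z}^\xi, \overline{z}^\eta$ gives $c(a,b)=c(a,b')$; and $c(a,b')=k_n$ because $(a,b') \in I_n^{p_\xi}\times J_n^{p_\xi}$. The mirror case is handled by the same A3a with $\xi$ and $\eta$ exchanged, using $\overline{z}^\eta\circledast\overline{z}^\xi$. The principal obstacle in this plan is precisely the bookkeeping of the $\Delta$-system: one must extract globally pairwise disjoint variable parts $S^\xi$, not merely per-slot $\Delta$-systems, so that A3a applies cleanly across mixed pairs; once that is arranged, the compatibility verification, the density argument, and the application of $MA_{\omega_1}$ all proceed routinely.
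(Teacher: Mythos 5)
Your proposal follows essentially the same route as the paper: the same poset of finite partial functions from $\omega$ into pairs of finite subsets of $\omega_1$ spanning monochromatic rectangles, the same density argument, and the same ccc strategy (stabilize domains, colours, and $\Delta$-system roots, make the variable parts pairwise alike using rectangularity, then merge slotwise). You in fact supply the A3a computation that the paper compresses into ``it is straightforward that $g\le f_\xi,f_\eta$.''

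There is one concrete slip in that computation. The hypothesis of A3a at the positions of $a$ and of $b'$ in $\overline{z}^\xi$ is $z_i^\xi\neq z_j^\xi$, i.e.\ $a\neq b'$ --- not $a\neq b$, which is what disjointness of $S^\xi$ and $S^\eta$ gives you. Since you do not require $I_n^p\cap J_n^p=\emptyset$ in the definition of a condition, an element can lie in both $I_n^{p_\xi}\setminus I_n^*$ and $J_n^{p_\xi}\setminus J_n^*$; then $a=b'$ is possible, the two occupy the same position in $\overline{z}^\xi$, A3a is vacuous, and the value $c(a,b)=c(z_i^\xi,z_i^\eta)$ is constrained only by A1a, which does not tie it to $k_n$. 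The paper forecloses exactly this case by demanding $f_0^n\cap f_1^n=\emptyset$ for every slot of every condition. Adding the analogous requirement $I_n^p\cap J_n^p=\emptyset$ to your poset costs nothing for the density argument (which, as in the paper, need only cover pairs of distinct ordinals) and repairs the step; with that emendation your proof coincides with the paper's.
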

	
	\begin{proof}
		We apply Martin's Axiom to the forcing consisting of finite partial functions from $\omega$ to the set $([\omega_1]^{<\omega}\setminus \{\emptyset\})^2$, with the following additional properties
		\begin{itemize}
			\item $\forall n \in \dom{f}$, if $f(n)=(f^n_0,f^n_1)$, then $f^n_0\cap f^n_1=\emptyset$,
			\item $\forall n \in \dom{f}$, $c \restriction f^n_0 \times f^n_1$ is constant.
		\end{itemize}
		
		This forcing produces a partition of $c$ into countable many rectangles: for every pair $(v_0,v_1)$ of distinct ordinals, the set of conditions containing $(v_0,v_1)$ in some coordinate is dense. The non-trivial part is to prove the c.c.c. Let us fix a family of conditions $\{f_\xi|\; \xi<\omega_1\}$. Without loss of generality, they all have the same domain $R \subseteq \omega$. Let us fix $n \in R$. By $\Delta$-system Lemma, we can assume that 
		
		$$\forall \xi<\omega_1 \quad f_\xi(n)=(\{a_1,\ldots,a_s,a_{s+1}^\xi,\ldots,a_t^{\xi}\}, \{b_1,\ldots,b_{s'},b_{s'+1}^\xi,\ldots,b_{t'}^{\xi}\}),$$
		for distinct elements $a_1,\ldots,a_t^{\xi},b_1,\ldots,b_{t'}^{\xi} \in \omega_1$. Next, we can trim our sequence again, so that for all $\xi \neq \eta <\omega_1$, we have
		$$(a_{s+1}^\xi,\ldots,a_t^{\xi},b_{s'+1}^\xi,\ldots,b_{t'}^\xi)\circledast (a_{s+1}^\eta,\ldots,a_t^{\eta},b_{s'+1}^\eta,\ldots,b_{t'}^\eta).$$
		Note that this implies, in particular, that the sets
		$$\{a_{s+1}^\xi,\ldots,a_t^\xi,a_{s+1}^\eta,\ldots,a_t^\eta\},$$
		$$\{b_{s'+1}^\xi,\ldots,b_{t'}^\xi,b_{s'+1}^\eta,\ldots,b_{t'}^\eta\}$$
		are disjoint. We perform the similar trimming procedure for each element of $R$. Finally, we choose any distinct $\xi,\eta < \omega_1$, and define
		$$g(n)=f_{\xi}(n) \cup f_{\eta}(n),$$
		for all $n \in R$.
		It is straightforward that $g \le f_\xi,f_\eta$.
	\end{proof}
	
	Take note that when we say \emph{$G$ is a countable union of rectangles}, we refer to both the connectedness relation, and its complement. This means that the graph is both closed and open in the topology generated by the sides of the (countably many) rectangles. If $|G|\le 2^\omega$, we can easily refine this topology to be Hausdorff. 
	
	\begin{defin}
		A graph $G=(V,E)$ is \emph{open} (resp. \emph{closed}), if $E$ is an open (resp. closed) subset of $V\times V$ with respect to some second countable Hausdorff topology on $V$.\\
		A graph $G=(V,E)$ is \emph{clopen}, if both $E$ and $V\times V \setminus E$ are open subsets of $V\times V$ with respect to some second countable Hausdorff topology on $V$.
	\end{defin}
	
	\begin{thm}
		Let $G=(V,E)$ be any graph. The following conditions are equivalent:
		\begin{enumerate}
			\item $G$ is rectangular,
			\item There exists a c.c.c. partial order forcing that $G$ is a countably or co-countably chromatic clopen graph,
			\item There exists an $\omega_1$-preserving partial order forcing that $G$ is a countably or co-countably chromatic clopen graph.
		\end{enumerate}
	\end{thm}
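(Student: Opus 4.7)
The plan is to establish the cycle $(2)\Rightarrow(3)\Rightarrow(1)\Rightarrow(2)$. The implication $(2)\Rightarrow(3)$ is immediate, since every c.c.c.\ forcing preserves $\omega_1$.

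For $(1)\Rightarrow(2)$, I first invoke the framework of Section 3.2 applied to the class of graphs: the $G$-analogue of Definition \ref{defappropriate} is preserved under finite support iterations by the verbatim argument of Proposition \ref{propx5}, and posets with this property preserve the rectangularity of $G$ by the argument of Proposition \ref{propx4}. A standard length-$\omega_2$ iteration enumerating the relevant c.c.c.\ posets therefore produces a c.c.c.\ extension in which $MA_{\omega_1}$ holds and $G$ is still rectangular. In this extension, the corollary to Theorem \ref{countableunion} yields that $G$ or $G^c$ is countably chromatic, and Theorem \ref{countablerecta} yields a countable cover of the off-diagonal edge and non-edge relations by disjoint rectangles $\{I_k\times J_k\}_k$ and $\{I'_k\times J'_k\}_k$. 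I then declare the topology on $V$ to have subbasis consisting of these rectangle sides, the members of the chromatic decomposition, and a countable subbasis for some Hausdorff second countable topology on $V$ (which exists since $|V|=\omega_1\le 2^\omega$). The resulting topology is second countable and Hausdorff, and both $E$ and its complement appear as unions of basic open rectangles, so $G$ is clopen.

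For $(3)\Rightarrow(1)$, I fix in the ground model a sequence $\{(x_1^\xi,\ldots,x_n^\xi):\xi<\omega_1\}$ of pairwise disjoint tuples, and argue in the forcing extension. Choose a countable basis for the witnessing clopen topology so that $E=\bigcup_m U_m\times W_m$ and the off-diagonal non-edges equal $\bigcup_m U'_m\times W'_m$; let $V=\bigcup_k A_k$ be the countably chromatic decomposition (into anticliques---the co-countably chromatic case is symmetric, using cliques). For each $\xi$ I form a type $\tau(\xi)$ recording, for every ordered pair of indices $(i,j)$ with $i\ne j$, the least basic rectangle containing $(x_i^\xi,x_j^\xi)$ together with its edge/non-edge label, and for each $i$ the least $k$ with $x_i^\xi\in A_k$. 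The type takes countably many values, so pigeonhole (valid because $\omega_1$ is preserved) yields an uncountable $S\subseteq\omega_1$ on which $\tau$ is constant. For any $\xi\ne\eta\in S$, axioms A2 and A3 follow from the fact that when $(x_i^\xi,x_j^\xi)$ and $(x_i^\eta,x_j^\eta)$ both lie in a common rectangle $U_m\times W_m$, so do the crossed pairs $(x_i^\xi,x_j^\eta)$ and $(x_i^\eta,x_j^\xi)$; and axiom A1 follows because $x_i^\xi,x_i^\eta$ share the anticlique $A_{k_i}$ for every $i$, making the vertical pairs $(x_i^\xi,x_i^\eta)$ uniformly non-edges. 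Since alikeness is a first-order property of the finite induced subgraph, it is absolute between the two models, so the alike pair $\xi,\eta$ witnesses rectangularity in the ground model.

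The step I expect to demand the most care is the $(3)\Rightarrow(1)$ pigeonhole: one must use the clopen structure and the chromatic decomposition simultaneously, as neither ingredient alone is sufficient. The rectangle data offers no handle on the vertical pairs $(x_i^\xi,x_i^\eta)$ needed for A1, while the chromatic data alone cannot control the horizontal edge patterns needed for A2 and A3. Once the right composite type is identified, the verification of the three axioms becomes mechanical and absoluteness of alikeness closes the argument.
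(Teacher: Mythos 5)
Your proposal follows the same route as the paper's proof: force $MA_{\omega_1}$ while preserving rectangularity via the $G$-c.c.\ machinery, extract the chromatic decomposition and the rectangle decomposition from the corollary to Theorem \ref{countableunion} and from Theorem \ref{countablerecta}, and for $(3)\Rightarrow(1)$ observe that a countably (or co-countably) chromatic clopen graph is rectangular by pigeonhole and that rectangularity is a $\Pi_1$ statement with $\omega_1$ as parameter, hence downward absolute for $\omega_1$-preserving forcings. Your $(3)\Rightarrow(1)$ is in fact considerably more detailed than the paper's, which compresses the whole pigeonhole argument into one sentence; the composite type you use (least basic rectangle with its label for each ordered pair of coordinates, plus the anticlique index of each coordinate) is exactly the right witness, and your verification of A1--A3 and the absoluteness of alikeness are sound. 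The one point where you fall short of the theorem as stated is the cardinality of $V$: the theorem concerns \emph{any} graph, and a rectangular graph need not have size $\omega_1$. Your iteration only forces $MA_{\omega_1}$, and you then assert $|V|=\omega_1$ when constructing the Hausdorff topology; if $|V|>2^\omega$ in the extension there is no second countable Hausdorff topology on $V$ at all, and moreover the corollary to Theorem \ref{countableunion} and Theorem \ref{countablerecta} are stated only for models of size $\omega_1$. The paper patches this by following the $MA_{\omega_1}$ stage with a forcing of $MA_{|V|}$ (which cannot destroy rectangularity, since under $MA_{\omega_1}$ every c.c.c.\ poset is Knaster and hence $G$-c.c.), so that $2^\omega\ge|V|$ holds and the decomposition forcings can be applied with $|V|$ in place of $\omega_1$. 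With that adjustment your argument is complete.
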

	\begin{proof} $\text{ }$
		$"1.\Rightarrow 2."$ Using the methods from Section 2, we can define a forcing that forces $\ma_{\omega_1}$ while preserving $1.$. Next, we force $\ma_{|V|}$ the usual way. This second forcing will not destroy rectangularity, because under $\ma_{\omega_1}$, each c.c.c. forcing is Knaster, and thus $G$-c.c. Finally, we apply $\ma_{|V|}$ to the forcing from Theorem \ref{countablerecta} (perhaps replacing $\omega_1$ with $|V|$). \\
		$"2.\Rightarrow 3."$ Clear.\\
		$"3.\Rightarrow 1."$ Let $\mathbb{P}$ denote a forcing from $3.$ By the pigeonhole principle, any countably chromatic clopen graph is a rectangular graph without an uncountable clique. Therefore $G$ will satisfy $1.$ after forcing with $\mathbb{P}$. But $1.$ is a $\Pi_1$ statement in the language of set theory with $\omega_1$ as a parameter, therefore it must have been true in the ground model.
	\end{proof}
	
	It is worth to note a curious analogy. \emph{Gaps} are well-known combinatorial structures, introduced by Hausdorff \cite{hausdorff}, among which much attention was gained by \emph{Suslin gaps} (or destructible gaps/S-gaps). They are characterized by an internal property, that is is equivalent to the property that they remain gaps in any $\omega_1$-preserving generic extension (see for example \cite{stevo} for the details).
	
	\begin{thm}
		Let $G=(V,E)$ be a closed graph. The following conditions are equivalent:
		\begin{enumerate}
			\item For every family $\{(x_0^\xi.\ldots,x_{n-1}^\xi)|\xi <\omega_1\} \subseteq V^n$, where $n<\omega$, there exist $\xi \neq \eta <\omega_1$ such that 
			$$\neg\; x_0^\xi \; E \; x_0^\eta, \;\ldots,\; \neg \; x_{n-1}^\xi \; E \; x_{n-1}^\eta;$$
			\item There exists a c.c.c. partial order forcing that $G$ is countably chromatic;
			\item There exists an $\omega_1$-preserving partial order forcing that $G$ is countably chromatic.
		\end{enumerate}
	\end{thm}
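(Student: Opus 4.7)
The plan is to prove the cycle $(2)\Rightarrow(3)\Rightarrow(1)\Rightarrow(2)$, paralleling the preceding theorem on clopen rectangular graphs. The implication $(2)\Rightarrow(3)$ is immediate since every c.c.c.\ forcing is $\omega_1$-preserving. For $(3)\Rightarrow(1)$, I fix an $\omega_1$-preserving $\mathbb{P}$ forcing $G$ to be countably chromatic and pass to a generic extension $V[H]$; an arbitrary ground-model family $\{(x_0^\xi,\ldots,x_{n-1}^\xi):\xi<\omega_1\}$ still has cardinality $\omega_1$ in $V[H]$ by $\omega_1$-preservation. Pigeonholing against the generic anticlique decomposition $V=\bigcup_{k<\omega}A_k$ produces $\xi\neq\eta$ whose respective coordinates land in identical anticliques, giving $\neg x_i^\xi\,E\,x_i^\eta$ for every $i<n$. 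Since this existential statement concerns only ground-model objects ($\omega_1$, the family, and $E$ restricted to ground-model points are all absolute), it reflects back to $V$.

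For $(1)\Rightarrow(2)$, I introduce the natural forcing
$$\mathbb{P}=\{p:\dom(p)\to[V]^{<\omega}|\; \dom(p)\in[\omega]^{<\omega},\ \forall n\in\dom(p)\ p(n)\ \text{is an anticlique}\},$$
ordered by componentwise reverse extension. Adjoining a fresh singleton class $(m,\{v\})$ at an unused coordinate witnesses that $\{p:v\in\bigcup\rg(p)\}$ is dense for each $v\in V$, so the generic provides a countable anticlique decomposition of $V$. The substantive task is c.c.c.\ Given $\{p_\xi:\xi<\omega_1\}\subseteq\mathbb{P}$, I stabilize $\dom(p_\xi)=R$ and, for each $n\in R$, run a $\Delta$-system on $\{p_\xi(n)\}_\xi$ to write $p_\xi(n)=C_n\cup\{a_{n,1}^\xi,\ldots,a_{n,k_n}^\xi\}$ with pairwise disjoint remainders. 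For $p_\xi,p_\eta$ to admit the common extension $p(n)=p_\xi(n)\cup p_\eta(n)$, each color class must remain an anticlique; the only non-trivial pairs to control are $(a_{n,i}^\xi,a_{n,i}^\eta)$ on the diagonal and $(a_{n,i}^\xi,a_{n,j}^\eta)$ with $i\neq j$ off the diagonal. Applying condition $(1)$ to the concatenated tuple $(a_{n,i}^\xi)_{n\in R,\,i\le k_n}$ handles the diagonal pairs in one stroke.

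The principal obstacle is the off-diagonal pairs, for which condition $(1)$---being strictly weaker than full rectangularity---supplies no direct non-edge. This is where the closed-graph hypothesis intervenes. Fix a countable base $\mathcal{B}=\{B_m:m<\omega\}$ for a second countable Hausdorff topology on $V$ witnessing that $E$ is closed, so $V^2\setminus E$ is open. For each $\xi$ and each pair of distinct coordinates $(n,i),(n,j)$ within a common color class, choose base elements $U_{n,i,j}^\xi,W_{n,i,j}^\xi\in\mathcal{B}$ with $a_{n,i}^\xi\in U_{n,i,j}^\xi$, $a_{n,j}^\xi\in W_{n,i,j}^\xi$, and $U_{n,i,j}^\xi\times W_{n,i,j}^\xi\cap E=\emptyset$. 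Each assignment is a finite tuple of base indices, so only countably many types arise; refine to an uncountable subfamily on which the assignment is constant, say $(U_{n,i,j},W_{n,i,j})$. After this refinement, any two $\xi,\eta$ in the subfamily automatically satisfy $(a_{n,i}^\xi,a_{n,j}^\eta)\in U_{n,i,j}\times W_{n,i,j}\subseteq V^2\setminus E$, which is precisely the missing off-diagonal non-edge. A final application of $(1)$ inside this subfamily delivers the diagonal non-edges, and $p_\xi,p_\eta$ become compatible. This topological refinement is the conceptual heart of the argument; everything else is routine bookkeeping in the spirit of the previous theorem.
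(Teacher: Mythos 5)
Your proposal is correct and follows essentially the same route as the paper: a finite-approximation forcing for the anticlique decomposition (the paper uses finite partial colourings $f:\dom f\to\omega$, a cosmetic variant of your poset), c.c.c.\ proved by stabilizing basic open neighbourhoods from the second countable topology to kill the off-diagonal edges and invoking condition~(1) for the diagonal pairs, and $3\Rightarrow1$ by pigeonhole plus downward absoluteness of the $\Pi_1$ statement. The only differences are bookkeeping choices (per-pair versus per-point neighbourhood assignment, and the direction of the implication cycle).
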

	\begin{proof} $\text{ }$
		$"1.\Rightarrow 2."$ We use the forcing
		$$\mathbb{P}=\{ f:\dom{f}\rightarrow \omega|\; \dom{f} \in [V]^{<\omega} \quad \forall x\neq y \in \dom{f} \; xEy \implies f(x)\neq f(y)\}.$$
		In order to prove the c.c.c. fix a family $\{f_\xi|\; \xi<\omega_1\} \subseteq \mathbb{P}$. Without loss of generality the domains of $f_\xi$ form a $\Delta$-system 
		$$\dom{f_\xi}=(x_1,\ldots,x_k,x_{k+1}^\xi,\ldots,x_m^\xi).$$
		We can moreover assume that the values $f_\xi(x_i)$ and $f_\xi(x_j^\xi)$ are independent of $\xi$, for all $i=1,\ldots,k$, $j=k+1,\ldots,m$. For every $\xi$, we fix a family of pairwise disjoint basic open neighbourhoods 
		$$\{U_x^\xi \ni x|\; x \in \dom{f_\xi}\},$$ with the property that 
		$$\forall x\neq y \in \dom{f_\xi} \quad \neg \; x \; E \; y \implies \forall x' \in U^\xi_x \quad \forall y' \in U^\xi_y\quad \neg  \; x'\; E\; y'. $$
		
		By another trimming, we can assume that the sets $U^\xi_x$ are independent of $\xi$. Finally, we choose $\xi \neq \eta<\omega_1$
		satisfying $1.$ It is standard to check that $f_\xi$ and $f_\eta$ are compatible.\\
		$"2.\Rightarrow 3."$ Clear.\\
		$"3.\Rightarrow 1."$ By the pigeonhole principle, any countably chromatic graph $G$ satisfies $1.$ Therefore $G$ will satisfy $1.$ after forcing with $\mathbb{P}$. But $1.$ is a $\Pi_1$ statement in the language of set theory with $\omega_1$ as a parameter, therefore it must have been true in the ground model.
	\end{proof}

	\begin{thm}
		Assume $\ma_{\omega_1}$. Then any rectangular tournament of size $\omega_1$ is a countable union of transitive tournaments. 
	\end{thm}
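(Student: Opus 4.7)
The plan is to force a countable decomposition of the vertex set into transitive pieces, using the partition forcing
\[
\mathbb{P} = \left\{ f : R \to [\omega_1]^{<\omega} \;\middle|\; R \in [\omega]^{<\omega}, \; f(n) \text{ is a finite transitive subtournament for each } n \in R \right\},
\]
with $g \le f$ iff $\dom{f} \subseteq \dom{g}$ and $f(n) \subseteq g(n)$ for every $n \in \dom{f}$. For each vertex $v \in \omega_1$, the set $\{ f \in \mathbb{P} \mid v \in \bigcup \rg{f} \}$ is dense: any $f$ extends by assigning $v$ to a fresh color (a singleton is trivially transitive). A filter meeting all such dense sets produces $h : \omega_1 \to \omega$ with every fiber $h^{-1}(n)$ a transitive subtournament, which is the sought decomposition. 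It remains to verify the c.c.c.

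Given an uncountable family $\{f_\xi \mid \xi < \omega_1\} \subseteq \mathbb{P}$, I successively apply the $\Delta$-system lemma to reduce to a common domain $R \subseteq \omega$ and, for each $n \in R$, a common root $\rho_n \subseteq \omega_1$ so that $f_\xi(n) = \rho_n \cup \{a_{s_n+1}^{n,\xi}, \ldots, a_{t_n}^{n,\xi}\}$, with the non-root parts across different $\xi$ pairwise disjoint. A further finite pigeonhole lets me fix, uniformly in $\xi$, the complete tournament structure on $f_\xi(n)$, viewed as a labeled tournament on $\rho_n$ together with positions $s_n+1, \ldots, t_n$; in particular, for every root vertex $a \in \rho_n$ and every non-root position $i$, the direction of the edge between $a$ and $a_i^{n,\xi}$ is independent of $\xi$. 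Concatenating all non-root entries (over $n \in R$) into one long tuple $\overline{x}_\xi$ and invoking Proposition~\ref{generalknaster}, I pass to an uncountable subfamily of pairwise alike tuples.

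Fix $\xi \ne \eta$ in that subfamily and abbreviate $a_i^\xi := a_i^{n,\xi}$. I claim $f_\xi(n) \cup f_\eta(n)$ is transitive for each $n \in R$, which yields $f_\xi \cup f_\eta \in \mathbb{P}$. By A2a, the bijection $a_i^\xi \mapsto a_i^\eta$ preserves the tournament relation on the non-root parts; combined with the preliminary pigeonhole, it extends to an isomorphism of $f_\xi(n)$ and $f_\eta(n)$ fixing $\rho_n$. By A1a and A1b, all edges $a_i^\xi \to a_i^\eta$ point the same way; say $a_i^\xi \to a_i^\eta$ for every non-root $i$. By A3a, for $i \ne j$ both non-root, the edges between $a_i^\xi$ and $a_j^\eta$ and between $a_i^\eta$ and $a_j^\xi$ both point in the direction of the edge between $a_i^\xi$ and $a_j^\xi$. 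Together with the preliminary pigeonhole controlling root-to-non-root edges, this shows that $a_i^\xi$ and $a_i^\eta$ play interchangeable roles against every other vertex of $f_\xi(n) \cup f_\eta(n)$. Consequently, inserting each $a_i^\eta$ immediately after its twin $a_i^\xi$ in the transitive linear order of $f_\xi(n)$ extends it to a transitive linear order on the whole union.

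The main obstacle is that axioms A1--A3 defining $\circledast$ only constrain relations among the non-root entries of the alike tuples, whereas transitivity of $f_\xi(n) \cup f_\eta(n)$ also requires control over edges between root and non-root vertices. Isolating this latter data as a separate finite-pigeonhole step performed before invoking Proposition~\ref{generalknaster} is what makes the case analysis for transitivity go through.
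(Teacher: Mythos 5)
Your proof is correct and follows essentially the same route as the paper: Martin's Axiom applied to the very same partition forcing $f:\dom{f}\rightarrow[\omega_1]^{<\omega}$ with each $f(n)$ transitive, with the c.c.c.\ obtained from rectangularity. The paper leaves the c.c.c.\ verification as a one-line remark, and your argument (a $\Delta$-system and finite pigeonhole to freeze the root-to-remainder edges, then Proposition~\ref{generalknaster} to get alike remainders, then interleaving each $a_i^\eta$ immediately after its twin $a_i^\xi$ in the linear order) is precisely the intended filling-in of that gap.
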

	
	\begin{proof}
		We apply Martin's Axiom to the forcing
		$$ \{f: \dom{f} \rightarrow [\omega_1]^{<\omega}|\; \dom{f} \in [\omega]^{<\omega}, \; \forall n \in \dom{f}\; f(n) \text{ is transitive.}\}$$
	\end{proof}
	For the proof of the c.c.c. just recall that transitive tournaments are essentially linear orders, so they satisfy the Strong Amalgamation Property.
	
	\section{Rectangular Structures from $\ch$}
	
	We prove that non-trivial (i.e. HSS) rectangular models exist assuming $\ch$. This is essentially an abstract version of a similar theorem for linear orders, proved in \cite{as}. 
	
	\begin{thm}
		Assume $\ch$. Let $\K$ be a class of structures in a language consisting of countably many binary relations, and assume that $\K$ satisfies RSP and SAP. Then $\K$ has a HSS rectangular model of size $\omega_1$.
	\end{thm}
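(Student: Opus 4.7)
The plan is to construct the desired $X \in \mathcal{K}$ supported on $\omega_1$ by a continuous transfinite recursion of length $\omega_1$, using $CH$ to enumerate in advance all HSS obligations and using SAP/RSP at every step to amalgamate in a way that ultimately forces rectangularity. Concretely, under $CH$ fix an enumeration $\{A_\alpha : \alpha < \omega_1\}$ of all countable subsets of $\omega_1$ with $A_\alpha \subseteq \alpha$, and build a continuous chain $(M_\alpha)_{\alpha < \omega_1}$ of countable members of $\mathcal{K}$ with underlying sets countable ordinals $\beta_\alpha$ satisfying $\sup_\alpha \beta_\alpha = \omega_1$; the final model is $X := \bigcup_{\alpha < \omega_1} M_\alpha$.

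At each successor stage $\alpha+1$, I would extend $M_\alpha$ in two coordinated moves. First, using SAP repeatedly, add a countable set $D_\alpha \subseteq \beta_{\alpha+1} \setminus \beta_\alpha$ realizing over $A_\alpha$ every one-point-extension type in $\mathcal{K}$, so that $D_\alpha$ will serve as a saturating subset for $A_\alpha$ inside $X$. Second, whenever the amalgamation step glues two isomorphic extensions of a shared base already sitting in the current model, invoke RSP rather than bare SAP, so that every freshly added tuple is $\circledast$-alike to the pre-existing tuple it mirrors. This way each stage records a stock of canonical alike pairs, indexed by finite type over the current model. Limit stages are taken as unions; hereditariness of $\mathcal{K}$ places $M_\lambda$ in $\mathcal{K}$ again.

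The HSS property is then routine to read off: any countable $A \subseteq X$ appears as some $A_\alpha$, and $D_\alpha \subseteq X \setminus A$ is a countable saturating subset. To verify rectangularity, take any family $\{\bar x^\xi : \xi < \omega_1\}$ of pairwise disjoint $n$-tuples in $X$. Each $\bar x^\xi$ lies in some $M_{\gamma(\xi)}$, and by the $\Delta$-system lemma together with the countable number of finite-tuple isomorphism types in $\mathcal{K}$, one can thin to an uncountable subfamily in which all tuples have the same isomorphism type over a fixed countable root $R$ and enter the recursion as fresh copies at stages where the RSP policy applies. That policy forces any two such tuples to be alike, yielding the required $\xi \neq \eta$.

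The main obstacle is precisely this last step: rectangularity quantifies over $\omega_1$-sequences that cannot themselves be listed from $CH$, so the alikeness must be extracted from a uniform local amalgamation policy rather than stage-by-stage anticipation of the enemy sequence. The recursion therefore has to be organized so that, for every finite type appearing over every countable $M_\alpha$, an entire batch of RSP-alike realizations is spun off cofinally often; arranging the bookkeeping so that this batch is rich enough to win the $\Delta$-system/pressing-down reduction for an arbitrary adversarial $\omega_1$-sequence is where the delicate work lies.
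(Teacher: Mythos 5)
Your outline correctly identifies where the difficulty sits, but it does not resolve it, and the resolution is the whole content of the theorem. The gap is in the rectangularity verification: an adversarial family $\{\overline{x}^\xi \mid \xi<\omega_1\}$ consists of arbitrary tuples of $X$, not of tuples that ``enter the recursion as fresh copies at stages where the RSP policy applies.'' Thinning by the $\Delta$-system lemma and by isomorphism type over a countable root gives tuples with the same \emph{internal} type, but $\circledast$ is a condition on the \emph{cross} colours $c(x_i^\xi,x_j^\eta)$ between two tuples, and nothing in a stage-by-stage amalgamation policy controls the cross colours between two arbitrary tuples that were never designated as a mirrored pair. Since there are $2^{\omega_1}$ candidate families, no $CH$ bookkeeping can anticipate them, as you yourself note. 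The paper's fix is to abandon bookkeeping in favour of genericity: $X$ is the union of an increasing chain of filters $G_\alpha\subseteq\operatorname{Fn}(\alpha,\K,\omega)$ such that each quotient $\operatorname{Fn}(\alpha',\K,\omega):G_\alpha$ is met generically over a suitable countable elementary submodel of $H(\omega_1)$ ($CH$ is what makes $H(\omega_1)$ coverable by a continuous chain of such models). Rectangularity is then verified \emph{after the fact}: given the adversarial family, take $N\prec H(\omega_2)$ containing it and $X$, let $\alpha=N\cap\omega_1$; elementarity plus pairwise disjointness forces the tuples indexed below $\alpha$ to lie inside $\alpha$ and those indexed above $\alpha$ to be disjoint from $\alpha$, and a density argument powered by RSP (form a mirror copy $p^\circ$ of a condition by renaming $x_i^\beta$ to $x_i^{\alpha+l}$ and amalgamate the two isomorphic extensions) shows that $G_{\alpha'}$ contains a condition forcing $\overline{x}^{\alpha+l}\circledast\overline{x}^{\beta}$ for some $l<\omega$ and $\beta<\alpha$. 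This is the missing idea; without it the proof does not close.

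There is a second, smaller gap in your HSS step: a saturating subset $D$ of $X\setminus A$ must realize one-point types over \emph{all} finite subsets of $X\setminus A$, including points added to the construction long after $D$ was. Your $D_\alpha$, built at stage $\alpha+1$ to realize types over $A_\alpha$, only sees the points present at that stage; you would need every later extension of the model to keep $D_\alpha$ saturating, which is again a forward-looking constraint that unions of ad hoc amalgamations do not provide. In the paper this too comes from genericity: the nontrivial case of the saturation proof is exactly $E\not\subseteq\alpha'$, handled by an induction using density in the later quotient forcings, with SAP guaranteeing that the relevant sets are dense and that the extended conditions remain compatible with the earlier filters.
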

	\begin{proof}
		Without loss of generality we can assume that every $ A\in\K$ is of the form $(A,c)$, where $A$ is a set, and $c:A^2\rightarrow \omega$. Let $\{M_\alpha|\; \alpha<\omega_1\}$ be a continuous sequence of countable elementary submodels of $H(\omega_1)$, and put $C=\{\alpha<\omega_1|\; M_\alpha\cap \omega_1=\alpha\}$. It is well-known that $C$ is a closed unbounded subset of $\omega_1$. For $\alpha \in C$, we denote by $\alpha'$ the successor of $\alpha$ in $C$. By induction on $\alpha \in C$ we define an increasing sequence of filters $G_\alpha \subseteq \operatorname{Fn}(\alpha,\K,\omega)$, ensuring that for any $\alpha \in C$
		
		$$G_{\alpha'} \subseteq \operatorname{Fn}(\alpha',\K,\omega):G_\alpha\quad \text{ is }\quad \operatorname{Fn}(\alpha',\K,\omega):G_\alpha \text{-generic over } M_{\tau{(\alpha)}},$$
		where $\tau{(\alpha)}$ is big enough, so that $\{\alpha',G_\alpha\} \in M_{\tau({\alpha})}$ (recall that $\operatorname{Fn}(\alpha',\K,\omega):G_\alpha$ denotes the set of those conditions from $\operatorname{Fn}(\alpha',\K,\omega)$ that are compatible with every element from $G_\alpha$). Note that each of these filters is definable from a real, so $\tau(\alpha)$ is well defined. Let $X=(\omega_1,c)$ be the union of all filters $G_\alpha$.\\
		Why is $X$ HSS? We will show that for any $\alpha \in C$, each final segment of $\alpha'$ is a saturating subset of $X$. Let $E\subseteq (\omega_1,c)$ be a finite substructure, with an extension $E\subseteq (E\cup\{e^*\},c^*)$. For $e \in E$ we set
		$$l_e=c^*(e,e^*),$$
		$$l^*_e=c^*(e^*,e).$$
		\paragraph{Case 1.}  Suppose that $E\subseteq \alpha'$. We will show that for every $\beta<\alpha'$ there exists $\delta \in (\beta,\alpha')$, such that for any  $e \in E$, we have 
		$$c(e,\delta)=l_e,$$
		$$c(\delta,e)=l_e^*.$$
		Indeed, it is sufficient to show that the following set (belonging to $M_{\tau(\alpha)}$) is dense in the partial order $\operatorname{Fn}(\alpha',\K,\omega):G_\alpha$:
		$$\{p\in \operatorname{Fn}(\alpha',\K,\omega):G_\alpha|\; \exists \; \delta \in (\beta,\alpha')\quad \forall \; e\in E\quad p \Vdash \dot{c}(e,\delta)=l_i,\; \dot{c}(\delta,e)=l^*_e\}.$$
		To see this, fix some $p \in \operatorname{Fn}(\alpha',\K,\omega):G_\alpha$, and assume that $E \subseteq \dom{p}$ (otherwise we can extend $p$, so that this is satisfied). Next, we fix $\delta \in (\beta,\alpha') \setminus \dom{p}$ and extend $p$ to $p^*$ by amalgamating the diagram
		\begin{center}
			\begin{tikzcd}
				& p \ar[dr,dashed] \
				&
				& \\
				E \ar[ur] \ar[dr]
				&
				& p^*=p\cup\{\delta\}
				\\
				& E\cup \{e^*\} \ar[ur,dashed]
				&
				&
			\end{tikzcd}
		\end{center}
		To make sure that $p^*$ is compatible with any condition from $G_\alpha$, notice that for any $q\in G_\alpha$ we can find $p'\le p,q$, not containing $\delta$, and disjointly amalgamate over the diagram
		\begin{center}
			\begin{tikzcd}
				& p^*
				&
				& \\
				p^*\cap p'=p \ar[ur] \ar[dr]
				&
				& 
				\\
				& p'
				&
				&
			\end{tikzcd}
		\end{center}
		
		\paragraph{Case 2.} Let $\alpha_1$ be the minimal ordinal in $C$ such that $\alpha_1\ge \alpha$, and $E \subseteq \alpha_1'$. We will prove by induction on $\alpha_1$ that
		$$\forall \; \beta<\alpha' \;\exists \; \delta \in (\beta,\alpha')\; \forall \; e \in E \quad c(e,\delta)=l_e,\; c(\delta,e)=l_e^*.$$
		If $\alpha_1=\alpha$, our task reduces to the previous case, so let as assume that whenever $\alpha_0 \in C \cap \alpha_1$, the conclusion holds for every $E \subseteq \alpha_0'$. In particular, it holds for $E\cap \alpha_1$. Define
		$$I=\{\delta \in (\beta,\alpha')|\; \forall\; e \in E\cap \alpha_1 \quad c(e,\delta)=l_e,\; c(\delta,e)=l_e^*\}.$$
		By the inductive hypothesis, $I$ is nonempty for any value of $\beta$, and so $|I|=\omega$. We must verify that the following set (belonging to $M_{\tau(\alpha_1)}$) is dense in the partial order $\operatorname{Fn}(\alpha_1',\K,\omega):G_{\alpha_1}$:
		$$\{p \in \operatorname{Fn}(\alpha_1',\K,\omega):G_{\alpha_1}|\; \exists \; \delta \in I \quad \forall \; e\in E\setminus \alpha_1 \quad p \Vdash \dot{c}(e,\delta)=l_i,\; \dot{c}(\delta,e)=l^*_e\}.$$
		Notice that this set is in $M_{\tau(\alpha_1)}$. The density argument exactly follows the Case 1, with $\alpha_1$ in place of $\alpha$, and $E\setminus \alpha_1$ in place of $E$.\\
		
		Why is $X$ rectangular? Consider a pairwise disjoint family $$\{(x_0^\xi,\ldots,x_{n-1}^\xi)|\; \xi<\omega_1\} \subseteq X^n.$$ We will be working with a fixed ordinal $\alpha \in C$ with an additional property that $\alpha=N\cap\omega_1$, for some countable elementary submodel $N \prec H(\omega_2)$, which contains the set
		$$\{X,\{(x_0^\xi,\ldots,x_{n-1}^\xi)|\; \xi<\omega_1\}\}.$$
		This property of $N$ ensures the following
		\begin{claim}\text{ }\\
			\begin{enumerate}
				\item $\forall\; \beta<\alpha\quad \{x_0^\beta,\ldots,x_{n-1}^\beta\}\subseteq \alpha$,
				\item $\forall \;\beta>\alpha \quad \{x_0^\beta,\ldots,x_{n-1}^\beta\}\cap \alpha =\emptyset.$
			\end{enumerate}
			\text{ }\\
		\end{claim}
		\begin{proof}[Proof of the Claim]
			\text{ }\\
			\begin{enumerate}
				\item For any $\beta<\alpha$, $\{x_0^\beta,\ldots,x_{n-1}^\beta\} \in N$. Since $\alpha=N\cap \omega_1$, the conclusion follows.
				\item Suppose towards contradiction that $\gamma \in \alpha \cap \{x_0^\beta,\ldots,x_{n-1}^\beta\}$, for some $\beta>\alpha$. By the elementarity,
				$$N\models \exists \; \delta<\omega_1  \quad \gamma \in \{x_0^\delta,\ldots,x_{n-1}^\delta\}.$$ Therefore we can choose $\delta<\alpha$, such that
				$$\gamma \in \{x_0^\beta,\ldots,x_{n-1}^\beta\}\cap \{x_0^\delta,\ldots,x_{n-1}^\delta\}.$$
				But this contradicts the assumption that the tuples are pairwise disjoint.
			\end{enumerate}
		\end{proof}
		
		Fix some ordinal $\beta<\alpha$. We want to show that the following set is dense in $\operatorname{Fn}(\alpha',\K,\omega):G_\alpha$:
		$$\{p \in \operatorname{Fn}(\alpha',\K,\omega):G_\alpha|\; \exists \; l<\omega \quad x^{\alpha+l}_0,\ldots,x^{\alpha+l}_{n-1},x^{\beta}_0,\ldots,x^{\beta}_{n-1} \in \dom{p},\quad$$
		$$p \Vdash (x^{\alpha+l}_0,\ldots,x^{\alpha+l}_{n-1})\circledast (x^{\beta}_0,\ldots,x^{\beta}_{n-1})\}.$$
		
		To see that this is the case, fix $p \in \operatorname{Fn}(\alpha',\K,\omega):G_\alpha$. We can of course assume that
		$\{x^{\beta}_0,\ldots,x^{\beta}_{n-1}\}\subseteq \dom{p}$, and choose $l<\omega$ such that $\{x^{\alpha+l}_0,\ldots,x^{\alpha+l}_{n-1}\}\cap \dom{p}=\emptyset$. Let $p^\circ$ denote the condition obtained by replacing each $x_i^\beta \in \dom{p}$ with $x_i^{\alpha+l}$, and defining relations so that the extensions
		$$p \setminus \{x^{\beta}_0,\ldots,x^{\beta}_{n-1}\} \subseteq p,$$
		and
		$$p \setminus \{x^{\beta}_0,\ldots,x^{\beta}_{n-1}\} \subseteq (p \setminus \{x^{\beta}_0,\ldots,x^{\beta}_{n-1}\}) \cup \{x^{\alpha+l}_0,\ldots,x^{\alpha+l}_{n-1}\}$$
		become isomorphic (via the function $x_i^\beta\mapsto x_i^{\alpha+l}$). We extend $p$ to $p^*$ by applying the RSP to the following diagram of isomorphic extensions
		\begin{center}
			\begin{tikzcd}
				& p \ar[dr,dashed] \
				&
				& \\
				p \setminus \{x^{\beta}_0,\ldots,x^{\beta}_{n-1}\} \ar[ur] \ar[dr]
				&
				& p^*
				\\
				& p^\circ \ar[ur,dashed]
				&
				&
			\end{tikzcd}
		\end{center}
		By the RSP, we can amalgamate so that $p^*\Vdash (x^{\alpha+l}_0,\ldots,x^{\alpha+l}_{n-1})\circledast (x^{\beta}_0,\ldots,x^{\beta}_{n-1})$. To see that $p^*$ is compatible with every element of $G_\alpha$, note that whenever $q \in G_\alpha$, we can find $p'\le p,q$ disjoint with $\{x^{\alpha+l}_0,\ldots,x^{\alpha+l}_{n-1}\}$, and then disjointly amalgamate over the diagram
		
		\begin{center}
			\begin{tikzcd}
				& p^*
				&
				& \\
				p'\cap p^*=p \ar[ur] \ar[dr]
				&
				& 
				\\
				& p'
				&
				&
			\end{tikzcd}
		\end{center}

	\end{proof}
	
	\section{Conclusions}
	
	Is the theory we develop entitled to be called "the uncountable Fra\"iss\'e theory with finite supports"? We left it to the reader to decide. Obviously, similarity of forcing notions
	$$\operatorname{Fn}{(\omega,\mathcal{K},\omega)}$$
	and
	$$\operatorname{Fn}{(\omega_1,\mathcal{K},\omega)}$$
	shows that some "genericity" is common to Fra\"iss\'e limits and rectangular models. On the other hand, properties of models added by the latter forcing seem heavily dependent on the set-theoretic background, unlike those of countable homogeneous models. Moreover, it looks like Proposition \ref{propx6} exhibits some strange asymmetry of rectangular models in the presence of $\ma_{\omega_1}$, which distinguishes them from their countable counterparts.

\end{document}